\documentclass[11pt,tbtags,reqno]{amsart}
\usepackage{amssymb}
\usepackage{amsthm}
\usepackage[swedish, english]{babel}
\usepackage{psfrag}   
\usepackage{graphicx}
\usepackage[retainorgcmds]{IEEEtrantools}
\usepackage{bbm}
\usepackage[square, comma, numbers, sort&compress]{natbib}
\usepackage{enumerate}
\usepackage{todonotes}

\theoremstyle{plain}
\newcommand{\E}{\mathbb E}
\newcommand{\R}{\mathbb R}

\newcommand{\F}{\mathcal F}

\newcommand{\ep}{\epsilon}

\newcommand{\dir}[1]{\delta_{#1}}

\newcommand{\ud}{\,\mathrm{d}}
\newcommand{\vd}{\mathrm{d}}\newcommand{\lt}{\left}
\newcommand{\rt}{\right}
\newcommand{\pt}{\partial}

\newcommand{\tend}{\rightarrow}
\newcommand{\into}{\rightarrow}
\newcommand{\Ex}[1]{\mathbb{E}\left[ #1 \right]}
\newcommand{\gm}{\gamma}

\newcommand{\Dr}{B} 

\def\P{{\mathbb P}}

\newcommand{\Ind}{\mathbbm{1}}

\newenvironment{example}[1][Example]{\begin{trivlist}
\item[\hskip \labelsep {\bf Example}]}{\end{trivlist}}
\newenvironment{remark}[1][Remark]{\begin{trivlist} \item[\hskip \labelsep {\bf Remark}]}
{\end{trivlist}}

\newtheorem{theorem}{Theorem}[section]
\newtheorem{lemma}[theorem]{Lemma}
\newtheorem{corollary}[theorem]{Corollary}
\newtheorem{proposition}[theorem]{Proposition}

\theoremstyle{definition}
\newtheorem*{acknowledgement}{Acknowledgement}

\title{Bayesian sequential testing of the drift of a Brownian motion}
\author[Erik Ekstr\"om and Juozas Vaicenavicius]{Erik Ekstr\"om$^{1}$ and Juozas Vaicenavicius}
\subjclass[2000]{Primary 62L10, 60G40; Secondary 62C10}
\keywords{Bayesian analysis; sequential hypothesis testing; optimal stopping}
\address{Uppsala University, Box 480, 75106 Uppsala, Sweden.}
\date{\today}
\thanks{$^1$ Support from the Swedish Research Council (VR) is gratefully acknowledged.}

\begin{document}

\begin{abstract}
We study a classical Bayesian statistics problem of sequentially testing the sign of the drift of an arithmetic Brownian motion 
with the $0$-$1$ loss function and a constant cost of observation per unit of time for general prior distributions. 
The statistical problem is reformulated as an optimal stopping problem with the current conditional probability that the drift is non-negative as the underlying process. The volatility of this conditional probability process is shown to be non-increasing in time, which enables us to prove monotonicity and continuity of the optimal stopping boundaries as well as to characterize them completely in the finite-horizon case as the unique continuous solution to a pair of integral equations. In the infinite-horizon case, the boundaries are shown to solve another pair of integral equations and a convergent approximation scheme for the boundaries is provided. Also, we describe the dependence between the prior distribution and the long-term asymptotic behaviour of the boundaries.
\end{abstract}

\maketitle

\section{Introduction}

\label{introduction}
One of the classical questions in Sequential Analysis concerns the testing of two simple hypotheses about the sign of the drift of an 
arithmetic Brownian motion. More precisely, suppose that an observed process $X_t$ is an arithmetic Brownian motion
\[X_t=\Dr t +W_t,\]
where the constant $\Dr$ is unknown and $W$ is a standard driftless Brownian motion. Based
on observations of the process $X$, one wants to test sequentially the hypotheses $H_0: \Dr < 0$ and $H_1:\Dr \geq 0$. 
In the Bayesian formulation of this sequential testing problem, the drift $\Dr$ is a random variable with distribution $\mu$, 
corresponding to the hypothesis tester's prior belief about the likeliness of the different values $\Dr$ may take. Moreover, it is assumed that $\Dr$ and $W$ are independent. 
In this article, we consider a classical formulation of the problem in which the accuracy and urgency of a decision is governed by a $0$-$1$ loss function together with a constant cost $c>0$ of observation per unit time. The $0$-$1$ loss function means that the tester gains nothing for a right decision but pays a penalty of size $1$ for being wrong. The overall goal is to find a decision rule minimising the expected total cost (provided such a decision rule exists). 
If the decision is required to be made before a fixed predetermined time $T>0$, 
the problem is said to have a finite horizon, and if there is no upper bound on the decision time, an infinite horizon.  

In the classical literature \cite{C} by Chernoff and \cite{B} by Bather on Bayesian sequential testing procedures for the sign of a drift, the special case of a normal prior distribution is studied. While Bather considers the $0$-$1$ loss function described
above as well as a few other alternatives, Chernoff deals with a different penalty function, which equals the magnitude $|\Dr|$ of the error. In these papers, it is argued that the sequential analysis problem reduces to a free-boundary problem for a function of time and the current value of the observation process, but, as in the case of most time-dependent free-boundary problems, the free-boundary problem lacks an explicit solution. 
Instead, the focus of these and many \mbox{follow-up} articles in the area, including \cite{BY72}, \cite{BC}, \cite{C65a}, \cite{C65b}, and \cite{tL88} to mention a few, is on asymptotic approximations for optimal stopping boundaries (for more references, see the survey article \cite{tL97}). 
Only recently, \cite{MZ13} has characterised the optimal stopping boundaries for the original Chernoff's problem in terms of an 
integral equation, which can be solved numerically. 

In \cite{S}, the sequential testing problem is solved explicitly for a two-point prior distribution by utilising the connection 
with a time-homogeneous free-boundary problem. 
Notably, the natural spatial variable in this free-boundary problem is not the value of the observation process, but 
the conditional probability of the drift taking one of the two possible values. (Since there is a one-to-one correspondence
between these two processes at each fixed time, the free-boundary problem could be transformed into one 
based on the observation process instead, but that formulation would introduce time-dependencies and thus 
make the explicit solution more difficult to find.) 

The fact that the problem can be solved in a very special case of a \mbox{two-point} prior, raises a natural question -- 
can the sequential testing problem be solved for a more general prior distribution? 
In this article, we investigate the sequential testing problem under a general prior distribution. Since this introduces 
time-dependencies in the problem, there is generally no hope for explicit solutions. Nevertheless, additional structure is found, 
which enables us to arrive at a fairly satisfactory answer.

To explain in some further detail, 
following standard arguments, the statistical problem is shown to admit an equivalent formulation as an optimal stopping problem, 
which we study to characterise optimal decision rules. The underlying process of the optimal stopping problem is chosen to be
the current probability, conditional on observations of $X$, that the drift is non-negative, i.e.
\[\Pi_t:=\P(B\geq 0\vert \mathcal F^X_t).\]
%
%
The pay-off function of the associated optimal stopping problem is then concave in $\Pi$, 
so general results about preservation of concavity 
for optimal stopping problems may be employed to derive structural properties of the continuation region. Moreover, 
the volatility of the underlying process $\Pi$ can be shown to be decreasing in time (except for 
the two-point distribution discussed above, in which case it is constant). 
These important facts allow us to show that the optimal stopping boundaries are monotone, so techniques from the theory of 
free-boundary problems with monotone boundaries can be applied. In particular, 
the monotonicity of the boundaries enables us to prove the smooth-fit condition and the continuity of the boundaries, as well as
to study the corresponding integral equations.
In the finite-horizon case, we characterise the optimal boundaries as a unique continuous solution to a pair of integral equations.
In the infinite-horizon case, the situation turns out to be more subtle. The boundaries are shown to solve another pair of integral equations, but whether the system admits a unique solution remains unanswered. Instead, we provide a converging approximation scheme for the optimal stopping boundaries, establishing that the optimal boundaries of the finite-horizon problem converge pointwise to the optimal boundaries of the infinite-horizon problem. Also, we determine the long-term asymptotes of the boundaries and 
describe their dependence on the prior distribution.

From a technical perspective, we tackle
a number of issues stemming from the infinite-dimensionality of the parameter space of the underlying process $\Pi$, the particular form of the unbounded payoff function, as well as the presence of time-dependent infinite-horizon boundaries. 
Filtering and analytic techniques are used to understand the behaviour of the conditional probability $\Pi$, with a particular focus on the properties that are invariant under any prior distribution. Also, the generality of the prior makes the verification of the smooth-fit condition more involved than in standard situations. 
Moreover, the specific form of the payoff function with the additive unbounded time term requires some additional effort to prove optimality of the hitting time in the infinite-horizon case. 
Our approach to approximate the optimal infinite-horizon boundaries could possibly be utilised in other similar situations.  


Although we manage to solve this classical sequential hypothesis testing problem for the sign of the drift of a Brownian motion in the general prior case, further generalisations appear to be challenging.
The assumption that the observation process is a Brownian motion 
gives a Markovian structure to the problem in the sense that the posterior distribution then only depends on the current 
value of the observation process. In contrast, if the observation process was a more general diffusion process, 
possibly involving time and level-dependent coefficients, then the posterior distribution of the drift would depend on 
the whole observation path, and thus our one-dimensional Markovian structure would be lost. 
Another 
noticeable 
limitation is the fixed $0$-$1$ loss function, which allows us to use a convenient process $\Pi$ as the underlying diffusion. 
For more general loss functions, the natural process to express the stopping problem in terms of is the underlying observation 
process itself. However, then the corresponding boundaries are not necessarily monotone, so the corresponding free-boundary 
problem is less tractable.  For Chernoff's loss function and the particular case of a normal prior distribution, \cite{MZ13} finds an
appropriate scaling and time-change of $X$ 
in that the transformation
guarantees the monotonicity of the corresponding stopping boundaries. The extension to more general prior distributions
and other loss functions remains an important and challenging open problem.

The paper is organised as follows. In  Section~\ref{2}, the sequential testing problem is formulated and reduced to an optimal
stopping problem. 
In Section~\ref{3}, filtering techniques are applied to find an expression for $\Pi$ in terms of the observation process $X$, and its dynamics in terms of the innovation process are determined.
We also study the volatility function of $\Pi$, and it is shown that this function is non-increasing in time.
In Section~\ref{optstop}, the optimal stopping problem is studied together with the corresponding free-boundary problem, 
and it is shown that the optimal stopping boundaries are continuous. In Section~\ref{S:inteqn}, integral equations
for the boundaries are determined, and uniqueness of solutions is established in the finite-horizon case. 
The long-term asymptotic behaviour of the infinite-horizon boundaries is presented in Section~\ref{longterm}.
Finally, Section~\ref{examples} is devoted to a special case of the normal prior distribution.

\begin{acknowledgement}
We are most grateful to Ioannis Karatzas for his suggestion to generalise the setting of an early version
of the current paper, and for sharing his unpublished notes on a related problem with us.
\end{acknowledgement}

\section{Problem formulation and reduction to an optimal stopping problem}
\label{2}

Let $(\Omega,\P,\F)$ be a complete probability space supporting a Brownian motion $W$ and a random variable $\Dr$ with distribution $\mu$ such that $W$ and $B$ are independent. Define
\[X_t=\Dr t +W_t.\] 
Writing $\mathbb F^X= \lt\{ \mathcal F^X_t \rt\}_{t \geq 0}$ for the filtration
generated by the process $X$ and the null sets in $\F$, our goal is to find a pair $(\tau, d)$ consisting of an $\mathbb F^X$-stopping time $\tau$ and 
an $\F^X_{\tau}$-measurable decision rule $d:\Omega \into \{0,1\}$, indicating which of the 
hypotheses $H_0:B<0$ or $H_1:B\geq 0$ to accept, in order to minimise the Bayes risk
\begin{IEEEeqnarray*}{rCl}
R(\tau, d) &:=& \E[\Ind_{\{d =1, \Dr<0\}} ]+ \E[\Ind_{\{d =0, \Dr\geq 0\}} ] +c\E[\tau].
\label{E:BRisk}
\end{IEEEeqnarray*}
Since $d$ is $\F^X_\tau$-measurable, we have
\begin{IEEEeqnarray}{rCl}
R(\tau, d) &=& \Ex{\Ex{\Ind_{\{ \Dr<0\}} \vert \F^X_\tau }\Ind_{\{d=1\}} + \Ex{\Ind_{\{ \Dr \geq 0\}} \vert \F^X_\tau }\Ind_{\{ d=0 \}}+c \tau} \label{E:sum},
\end{IEEEeqnarray}
which shows that, at a given stopping time $\tau$, the decision rule 
\begin{IEEEeqnarray*}{rCl}
	d = 
	\begin{cases}
	1  &\text{if } \P \lt( \Dr \geq 0\vert \F^X_\tau \rt) \geq \P \lt(  \Dr<0\vert \F^X_\tau  \rt), \\
	0  &\text{otherwise}
	\end{cases}
\end{IEEEeqnarray*}
is optimal. Consequently, writing  
\begin{equation*}
\Pi_t:=\P(\Dr\geq 0\vert \mathcal F^X_t),
\end{equation*}
the sequential testing problem (\ref{E:BRisk}) reduces to an optimal stopping problem
\begin{IEEEeqnarray}{rCl}
\label{V}
V = \inf_{\tau\in\mathcal T}\E[g(\Pi_\tau) + c\tau],
\end{IEEEeqnarray}
where $g(\pi)=\pi\wedge(1-\pi)$ and $\mathcal T$ denotes the set of $\mathbb F^X$-stopping times. 
We also consider the same sequential testing problem but with a finite horizon $T<\infty$.
The corresponding optimal stopping problem is then 
\begin{IEEEeqnarray*}{rCl}
V^T = \inf_{\tau\in\mathcal T^{T}}\E[g(\Pi_\tau) + c\tau],
\end{IEEEeqnarray*}
where $\mathcal T^{T}=\{\tau\in \mathcal T:\tau\leq T\}$.

\begin{remark}
By translation, our study readily extends to testing the hypotheses $H_0: B<\theta$ and $H_1:B\geq \theta$ for any given $\theta\in\R$.
The methods also extend to the case when the two types of possible errors are associated with different costs,
i.e. when 
\begin{IEEEeqnarray*}{rCl}
R(\tau, d) &=& a\E[\Ind_{\{d =1, \Dr<0\}} ]+ b\E[\Ind_{\{d =0, \Dr\geq 0\}} ] +c\E[\tau]
\end{IEEEeqnarray*}
for constants $a>0$ and $b>0$ with $a\not=b$. 
For simplicity of the presentation, however, 
we assume throughout the article that $\theta=0$ and $a=b=1$. 
\end{remark}

Note that in the cases when $\mu\lt((-\infty, 0)\rt)=0$ or 
$\mu\lt([0, \infty)\rt)=0$, the sequential testing problem becomes trivial as we can make the correct statement about the sign of the drift at time zero. Hence, from now onwards, we always assume that 
\begin{IEEEeqnarray}{rCl}
\label{massonbothsides}
0<\mu\lt([0, \infty)\rt)<1.
\end{IEEEeqnarray}

%
%

\section{Conditional probability of non-negative drift}

\label{3}

In this section, we derive a filtering equation for the distribution of $\Dr$ conditional 
on the observations of $X$, which is then applied to prove some elementary results concerning the conditional 
distribution of the sign of $\Dr$.
We also show that there is an explicit one-to-one correspondence between $\Pi$ and the observation process $X$ at each fixed time,
and we determine the dynamics of $X$ and $\Pi$ in terms of the innovation process. 

\subsection{Filtering of the unknown drift}
\begin{proposition}
\label{T:integrals}
Assume that $q:\R\to\R$ satisfies $\int_\R |q(b)| \mu(\vd b) < \infty$. Then 
\begin{IEEEeqnarray}{rCl}
\label{integrals}
\E\lt[ q(\Dr) \vert \mathcal F^X_t\rt] 
=\frac{\int_\R q(b) e^{bX_t -b^2t/2} \mu(\vd b)}{\int_\R e^{bX_t-b^2t/2} \mu(\vd b)}
\end{IEEEeqnarray} 
for any $t>0$.
\end{proposition}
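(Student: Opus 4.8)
The plan is to identify the right-hand side of \eqref{integrals}, call it $\Phi_t$, as the conditional expectation $\E[q(\Dr)\vert\mathcal F^X_t]$ directly from the defining characterisation of conditional expectation. Since $\Phi_t$ is an explicit function of $X_t$ alone, it is automatically $\mathcal F^X_t$-measurable; thus it remains to verify that $\E[q(\Dr)G]=\E[\Phi_t G]$ for every bounded $\mathcal F^X_t$-measurable $G$. By a monotone class argument it suffices to test against $G=F\bigl((X_s)_{s\le t}\bigr)$ ranging over bounded measurable functionals $F$ of the path on $[0,t]$, since such functionals generate $\mathcal F^X_t$.

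The key input is a Cameron--Martin change of measure performed conditionally on the drift. Conditional on $\Dr=b$, the process $(X_s)_{s\le t}$ is a Brownian motion with constant drift $b$, so its law on $C[0,t]$ is equivalent to Wiener measure $\mathbb W$ with Radon--Nikodym derivative $\exp\bigl(bX_t-b^2t/2\bigr)$. The crucial structural feature, which ultimately produces the stated closed form, is that this likelihood ratio depends on the path only through the endpoint $X_t$, so that $X_t$ is a sufficient statistic. Disintegrating over $\Dr$ using the independence of $\Dr$ and $W$, and then applying this change of measure, I obtain for nonnegative bounded $F$
\[
\E[q(\Dr)F]=\int_\R q(b)\,\E_{\mathbb W}\!\bigl[F\,e^{bX_t-b^2t/2}\bigr]\,\mu(\vd b),
\]
where $\E_{\mathbb W}$ denotes expectation under $\mathbb W$; the case $q\equiv 1$ gives $\E[F]=\E_{\mathbb W}[F\,D_t]$ with $D_t:=\int_\R e^{bX_t-b^2t/2}\mu(\vd b)$.

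Next I record the integrability facts that make everything legitimate. For each fixed path and each $t>0$, the map $b\mapsto e^{bX_t-b^2t/2}$ is bounded by $e^{X_t^2/(2t)}$, so $D_t$ is finite and strictly positive almost surely and $\int_\R|q(b)|e^{bX_t-b^2t/2}\mu(\vd b)<\infty$ almost surely; this is precisely where the hypothesis $t>0$ and the $\mu$-integrability of $q$ enter, and it also yields integrability of $\Phi_t$. These bounds justify the Tonelli/Fubini interchange between $\mu$ and $\mathbb W$ used above. Now, applying the identity $\E[H]=\E_{\mathbb W}[H\,D_t]$ to $H=\Phi_t F$ and using the pathwise relation $\Phi_t D_t=\int_\R q(b)e^{bX_t-b^2t/2}\mu(\vd b)$ gives
\begin{IEEEeqnarray*}{rCl}
\E[\Phi_t F] &=& \E_{\mathbb W}[\Phi_t F\,D_t] \\
&=& \int_\R q(b)\,\E_{\mathbb W}\!\bigl[F\,e^{bX_t-b^2t/2}\bigr]\,\mu(\vd b) \;=\; \E[q(\Dr)F].
\end{IEEEeqnarray*}
One first carries this out for nonnegative $q$ and nonnegative $F$, where Tonelli applies, and then extends to the general case by writing $q=q^{+}-q^{-}$ and $F=F^{+}-F^{-}$ and invoking linearity together with the integrability just established. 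This verifies the defining property and hence \eqref{integrals}.

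The main obstacle is the rigorous justification of the conditional Cameron--Martin step: that the law of $X$ given $\Dr=b$ is equivalent to $\mathbb W$ with the stated density and, crucially, that this density is a function of the endpoint $X_t$ only. Coupled with the measure-theoretic bookkeeping -- the Fubini interchange between $\mu$ and $\mathbb W$, the positivity and finiteness of $D_t$ for $t>0$, and the monotone-class reduction to path functionals -- this is where the care lies; the remaining manipulations are routine.
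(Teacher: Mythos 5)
Your proof is correct, but its architecture differs from the paper's. The paper enlarges the filtration to $\mathcal G_t=\sigma(B,W_s:s\leq t)$, changes measure globally via the exponential martingale $Z_t=e^{-BW_t-B^2t/2}$ to a measure $\P_*$ under which $X$ is a Brownian motion independent of $B$ (with the law of $B$ unchanged), and then invokes the abstract Bayes (Kallianpur--Striebel) formula $\E[q(B)\,\vert\,\F^X_t]=\E_*[q(B)/Z_t\,\vert\,\F^X_t]/\E_*[1/Z_t\,\vert\,\F^X_t]$, citing the filtering literature for both key inputs. You instead verify the defining property of conditional expectation directly: you condition on $B=b$, apply the classical Cameron--Martin theorem to write the law of $(X_s)_{s\leq t}$ given $B=b$ as $e^{bX_t-b^2t/2}\,\vd\mathbb{W}$, and exploit that this density is a function of the endpoint alone to close the Fubini computation. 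The two arguments rest on the same likelihood ratio --- your $\mathbb{W}$ is precisely the law of $X$ on $[0,t]$ under the paper's $\P_*$, and your $D_t$ is the paper's $\E_*[1/Z_t\,\vert\,\F^X_t]$ --- but yours is more self-contained: it needs only Cameron--Martin plus Tonelli/Fubini, with no enlarged filtration, no verification that $Z$ is a martingale, and no abstract Bayes rule, at the price of the bookkeeping you carry out (positivity and finiteness of $D_t$ for $t>0$, integrability of $\Phi_t$, which indeed follows by taking $F\equiv 1$ for $q^{\pm}$, and the reduction to path functionals). One small point to make explicit: since $\F^X_t$ is augmented by the null sets of $\F$, bounded functionals of the path generate $\F^X_t$ only up to null sets; this is harmless because conditional expectations with respect to a $\sigma$-algebra and its completion coincide almost surely, but it deserves a sentence.
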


\begin{proof}
The proof is based on standard methods in filtering theory, see e.g. \cite[Section 3.3]{BC09}, yet we include it for completeness.
First define an enlarged filtration $\mathbb G = \{\mathcal G_t\}_{0\leq t < \infty}$ 
as the completion of $\lt\{ \sigma (\Dr, W_s: 0 \leq s \leq t) \rt\}_{0\leq t <\infty}$. 
Clearly, $\mathcal F^X_t \subseteq \mathcal G_t$ for any $t \geq 0$, so $\mathbb G$ is an enlargement of $\mathbb F^X$. 
Observing that $Z_t := e^{-\Dr W_t-\Dr^2 t/2 }$ is a $\mathbb G$-martingale, we define 
a new probability measure $\P_*$ on the restriction $(\Omega, \mathcal G_T)$ for some large enough $T$ by
\[ \frac{\vd \P_*}{\vd \P} \vert_{\mathcal G_T} := Z_T.\] 

It can be shown that under $\P_*$, $X_t$ is a Brownian motion independent of $\mathcal G_0$ and therefore also of 
$\Dr$ and that the law of $B$ is $\mu$ (see \cite[Proposition 3.13]{BC09}). Thus Bayes' rule (cf., for example, \cite{bO}) gives that
\begin{IEEEeqnarray*}{rCl}
\E\lt[ q(\Dr) \vert \mathcal F^X_t\rt] 
= \frac{\E_*\lt[q(\Dr)/Z_t \vert \mathcal F^X_t \rt] }{\E_* \lt[ 1/Z_t \vert \mathcal F^X_t \rt]}  
=\frac{\int_\R q(b) e^{bX_t -b^2t/2} \mu(\vd b)}{\int_\R e^{bX_t-b^2t/2} \mu(\vd b)}
\end{IEEEeqnarray*} 
for $t>0$ and for any function $q:\R\to\R$ with $\int_\R |q(b)| \mu(\vd b) < \infty$. 
\end{proof}

\subsection{Conditional probability of non-negative drift}
According to Proposition~\ref{T:integrals}, 
\begin{IEEEeqnarray*}{rCl}
\Pi_t &=&  \E\lt[ \Ind_{[0, \infty)}(\Dr) \vert \mathcal F^X_t\rt] = \pi(t, X_t),
\end{IEEEeqnarray*}
where the function $\pi(t, x): (0, \infty) \times \R \into (0,1)$ is given by
\begin{IEEEeqnarray}{rCl}
\label{E:pi}
\pi(t,x) := \frac{\int_{[0,\infty)}e^{bx -b^2t/2} \mu(\vd b)}{\int_\R e^{bx-b^2t/2} \mu(\vd b)}.
\end{IEEEeqnarray}
Denoting by
\begin{IEEEeqnarray}{rCl}
\label{E:mu}
\mu_{t,x}(\vd b):=\frac{e^{bx-\frac{b^2}{2}t} \mu(\vd b)}{\int_\R e^{bx-\frac{b^2}{2}t} \mu(\vd b)}
\end{IEEEeqnarray}
the distribution of $\Dr$ at time $t$ conditional on $X_t=x$, we thus have 
\[\pi(t,x) =\int_{[0,\infty)} \mu_{t,x}(\vd b).\]

\begin{proposition}
\label{incr}
Assume that $q:\R\to\R$ is non-decreasing and satisfies $\int_\R |q(b)| \mu(\vd b) < \infty$.
Then the function
\begin{IEEEeqnarray}{rCl}
\label{u}
u(t,x):=\frac{\int_\R q(b) e^{bx -b^2t/2} \mu(\vd b)}{\int_\R e^{bx-b^2t/2} \mu(\vd b)}=\int_\R q(b)  \mu_{t,x}(\vd b)
\end{IEEEeqnarray}
is non-decreasing in $x$ for any fixed $t>0$.
\end{proposition}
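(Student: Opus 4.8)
The plan is to differentiate $u$ with respect to $x$ and to recognise the resulting expression as a covariance under the conditional law $\mu_{t,x}$, after which the monotonicity reduces to the elementary fact that two comonotone functions are positively correlated. Throughout, $t>0$ is fixed. Write $u(t,x)=N(x)/D(x)$, where $N(x):=\int_\R q(b)e^{bx-b^2t/2}\,\mu(\vd b)$ and $D(x):=\int_\R e^{bx-b^2t/2}\,\mu(\vd b)>0$. The first and main technical task is to justify differentiation under the integral sign. For $x$ confined to a bounded neighbourhood, the $x$-derivative of the integrand of $N$ is $bq(b)e^{bx-b^2t/2}$; since for fixed $t>0$ the Gaussian factor $e^{-b^2t/2}$ dominates $|b|e^{bx}$, the quantity $|b|e^{bx-b^2t/2}$ is bounded uniformly in $b$ and locally uniformly in $x$, so that $|bq(b)e^{bx-b^2t/2}|\le C|q(b)|$, which is $\mu$-integrable by hypothesis. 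Dominated convergence then legitimises the differentiation, and the same bound guarantees that all the integrals appearing below are finite.

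Second, a direct computation of the quotient derivative gives
\[\partial_x u(t,x)=\frac{N'(x)D(x)-N(x)D'(x)}{D(x)^2}=\int_\R b\,q(b)\,\mu_{t,x}(\vd b)-\Big(\int_\R q(b)\,\mu_{t,x}(\vd b)\Big)\Big(\int_\R b\,\mu_{t,x}(\vd b)\Big),\]
which is exactly the covariance of the identity map $b\mapsto b$ and $q$ under the probability measure $\mu_{t,x}$.

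Finally, I would invoke the standard fact that for any probability measure $\nu$ and any two non-decreasing functions $f,g$ one has the symmetrisation identity
\[2\,\mathrm{Cov}_\nu(f,g)=\int_\R\!\int_\R\big(f(b)-f(b')\big)\big(g(b)-g(b')\big)\,\nu(\vd b)\,\nu(\vd b')\ge0,\]
the integrand being non-negative because $f$ and $g$ increase together. Applying this with $f(b)=b$, $g=q$ and $\nu=\mu_{t,x}$ yields $\partial_x u(t,x)\ge0$ for every $x$, and hence $u(t,\cdot)$ is non-decreasing. The only delicate point in the whole argument is the integrability bookkeeping of the first step; the monotonicity itself is immediate once the derivative is identified as a covariance. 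Should one wish to avoid differentiation altogether, the same symmetrisation can be applied directly to $u(t,x_2)-u(t,x_1)$ for $x_1<x_2$: putting the difference over the common denominator $D(x_1)D(x_2)>0$, its numerator equals
\[\frac12\int_\R\!\int_\R\big(q(b)-q(b')\big)e^{-(b^2+b'^2)t/2}e^{(b+b')x_1}\big(e^{b(x_2-x_1)}-e^{b'(x_2-x_1)}\big)\,\mu(\vd b)\,\mu(\vd b'),\]
and both $q(b)-q(b')$ and $e^{b(x_2-x_1)}-e^{b'(x_2-x_1)}$ carry the sign of $b-b'$, so the integrand is non-negative.
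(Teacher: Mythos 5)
Your main argument coincides with the paper's own proof: both justify differentiation under the integral sign (your dominating function $C|q(b)|$ is exactly what the paper's appeal to the standard differentiation lemma of \cite{aK08} supplies) and then identify $\partial_x u(t,x)$ with the covariance of $b\mapsto b$ and $q$ under $\mu_{t,x}$. The two write-ups differ only in how non-negativity of that covariance is deduced: the paper uses the centering identity
\begin{equation*}
\mathrm{Cov}_{\mu_{t,x}}(\Dr,q(\Dr))=\E_{t,x}\lt[(\Dr-\E_{t,x}[\Dr])\,(q(\Dr)-q(\E_{t,x}[\Dr]))\rt]\geq 0,
\end{equation*}
whose integrand is pointwise non-negative because both factors carry the sign of $\Dr-\E_{t,x}[\Dr]$, whereas you invoke the two-variable symmetrisation (Chebyshev correlation) identity; these are interchangeable one-line arguments, and your integrability bookkeeping for both is sound. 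What is genuinely different is your closing alternative: symmetrising $u(t,x_2)-u(t,x_1)$ directly over the common denominator $D(x_1)D(x_2)$ gives a derivative-free proof whose only measure-theoretic input is Fubini (valid since $e^{bx-b^2t/2}$ is bounded in $b$ for fixed $t>0$ and $x$), so the differentiation lemma and the attendant domination argument disappear entirely, and monotonicity is obtained without even asserting that $u(t,\cdot)$ is differentiable. The paper's route has the compensating advantage that the explicit derivative formula \eqref{E:ux} it produces is reused later, e.g. to derive the expression \eqref{sigma} for the volatility of $\Pi$, which your derivative-free variant would not supply.
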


\begin{proof}
We will prove the claim by showing that $u(t, \cdot)$ is differentiable with a non-negative derivative on $\R$.
By a standard differentiation lemma (see, for example, \cite[Theorem 6.28]{aK08}), both the numerator and the 
denominator in \eqref{u} are differentiable with respect to $x$ with their derivatives obtained by differentiating under the integral sign. 
Thus the derivative of $u$ with respect to the second argument $x$ is given by
\begin{IEEEeqnarray}{rCl}
\label{E:ux}
\pt_2 u(t, x) &=&
\int_{\R} q(b) b  \mu_{t,x}(\vd b) - \int_{\R} q(b) \mu_{t,x}(\vd b) \int_{\R} b \mu_{t,x}(\vd b) \\
\nonumber
&=& \E_{t,x}\lt[q(\Dr)\Dr\rt]-\E_{t,x}\lt[q(\Dr)\rt]\E_{t,x}\lt[\Dr\rt],
\end{IEEEeqnarray}
where $\E_{t,x}$ is the expectation operator under the probability measure $\P_{t,x}(\cdot ):=\P(\cdot\vert X_t=x)$.
Since 
\begin{IEEEeqnarray}{rCl}
\label{posder}
&&\hspace{-10mm}
\E_{t,x}\lt[q(\Dr)\Dr\rt]-\E_{t,x}\lt[q(\Dr)\rt]\E_{t,x}\lt[\Dr\rt]\\ 
\notag
&=& \E_{t,x}\lt[(\Dr-\E_{t,x}[\Dr])(q(\Dr)-q(\E_{t,x}[\Dr]))\right]
\geq 0,
\end{IEEEeqnarray}
this finishes the proof.
\end{proof}

\begin{corollary} \label{T:mon}
Let $a \in \R$ and $t> 0$.  Then
\begin{enumerate}
\item $\P\lt( \Dr >  a \,|\, X_t = x\rt)$ is non-decreasing in $x$, 
\item $\P\lt( \Dr < a \,|\, X_t = x\rt)$ is non-increasing in $x$. 
\end{enumerate}
\end{corollary}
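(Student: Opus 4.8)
The plan is to derive both statements directly from Proposition~\ref{incr} by choosing $q$ to be the indicator function of an appropriate half-line. The key observation is that for fixed $a\in\R$ and $t>0$ the two conditional probabilities are precisely expectations of such indicators under $\P_{t,x}$, namely
\begin{IEEEeqnarray*}{rCl}
\P(\Dr > a \mid X_t = x) &=& \E_{t,x}[\Ind_{(a,\infty)}(\Dr)], \\
\P(\Dr < a \mid X_t = x) &=& \E_{t,x}[\Ind_{(-\infty,a)}(\Dr)],
\end{IEEEeqnarray*}
so the monotonicity in $x$ will follow once the monotonicity hypothesis on $q$ in Proposition~\ref{incr} is matched to the monotonicity of the corresponding indicator.

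For part (1), I would take $q = \Ind_{(a,\infty)}$. This function is non-decreasing on $\R$ and bounded, so in particular $\int_\R |q(b)|\,\mu(\vd b) \le 1 < \infty$; thus Proposition~\ref{incr} applies and gives that $x \mapsto \E_{t,x}[\Ind_{(a,\infty)}(\Dr)] = \P(\Dr > a \mid X_t = x)$ is non-decreasing, as claimed.

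For part (2), the indicator $\Ind_{(-\infty,a)}$ is non-increasing rather than non-decreasing, so Proposition~\ref{incr} cannot be invoked directly on it. I would instead apply the proposition to the non-decreasing, bounded function $q = \Ind_{[a,\infty)}$, obtaining that $x \mapsto \P(\Dr \ge a \mid X_t = x)$ is non-decreasing; writing $\P(\Dr < a \mid X_t = x) = 1 - \P(\Dr \ge a \mid X_t = x)$ then yields the desired non-increasing monotonicity. Equivalently, one could apply Proposition~\ref{incr} to $-\Ind_{(-\infty,a)}$, which is non-decreasing.

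Since both cases reduce to checking that a bounded indicator of a half-line is monotone, there is no substantive obstacle here; the entire content of the corollary is the passage, via Proposition~\ref{incr}, from monotonicity of $q$ to monotonicity in the spatial variable $x$. The only point requiring a moment's care is the orientation of the indicator in part (2), which is handled by complementation as above.
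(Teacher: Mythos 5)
Your proposal is correct and follows exactly the paper's own argument: part (1) applies Proposition~\ref{incr} to $q=\Ind_{(a,\infty)}$, and part (2) applies it to $q=\Ind_{[a,\infty)}$ and then uses the complementation $\P(\Dr < a \mid X_t = x) = 1-\P(\Dr \geq a \mid X_t = x)$. Nothing further is needed.
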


\begin{proof}
The first claim follows by applying Proposition~\ref{incr} to the function $q(b)=1_{(a,\infty)}(b)$.
The second claim follows from $\P\lt( \Dr < a \,|\, X_t = x\rt)= 1-\P\lt( \Dr \geq  a \,|\, X_t = x\rt)$ and
by applying Proposition~\ref{incr} to the function $q(b)=1_{[a,\infty)}(b)$.
\end{proof}

\begin{proposition}
\label{f-1-1}
For any given $t> 0$, the function $\pi(t,\cdot): \R \to (0,1)$ defined in \eqref{E:pi} is a strictly increasing continuous bijection.
\end{proposition}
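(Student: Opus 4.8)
The plan is to treat the three assertions---continuity, strict monotonicity, and surjectivity onto $(0,1)$---separately, inheriting the first two almost immediately from the machinery of Proposition~\ref{incr} and devoting the real work to surjectivity.

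For strict monotonicity I would specialise the derivative computation \eqref{E:ux}--\eqref{posder} to $q=\Ind_{[0,\infty)}$, for which $u(t,\cdot)=\pi(t,\cdot)$, so that $\pt_2\pi(t,x)=\E_{t,x}[(\Dr-m)(q(\Dr)-q(m))]\geq 0$ with $m:=\E_{t,x}[\Dr]$. To upgrade this to a strict inequality I would observe that the conditional law $\mu_{t,x}$ from \eqref{E:mu} is equivalent to $\mu$, its density $b\mapsto e^{bx-b^2t/2}$ being everywhere strictly positive; hence by \eqref{massonbothsides} it assigns strictly positive mass to both $(-\infty,0)$ and $[0,\infty)$. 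Splitting according to whether $m\geq 0$ or $m<0$, the integrand $(b-m)(q(b)-q(m))$ is strictly positive on the side of $0$ opposite to $m$, which carries positive $\mu_{t,x}$-mass, so $\pt_2\pi(t,x)>0$. Continuity (indeed $C^1$ in $x$) then comes for free, since the differentiation lemma already invoked in Proposition~\ref{incr} shows both numerator and denominator in \eqref{E:pi} are differentiable in $x$ with the denominator strictly positive. The same positivity, together with \eqref{massonbothsides}, shows $0<\pi(t,x)<1$ for every $x$.

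The substantive step is surjectivity. As $\pi(t,\cdot)$ is continuous, strictly increasing, and takes values in $(0,1)$, its image is the open interval $(L_-,L_+)$ with $L_\pm:=\lim_{x\to\pm\infty}\pi(t,x)$, so it suffices to show $L_+=1$ and $L_-=0$. For $L_+=1$ I would estimate $1-\pi(t,x)=\frac{\int_{(-\infty,0)}e^{bx-b^2t/2}\mu(\vd b)}{\int_\R e^{bx-b^2t/2}\mu(\vd b)}$: for $x\geq 1$ the numerator's integrand is dominated on $(-\infty,0)$ by $e^{-b^2t/2}\leq 1$ and tends to $0$ pointwise, so dominated convergence drives the numerator to $0$, while the denominator is bounded below by the positive constant $\int_{[0,\infty)}e^{b-b^2t/2}\mu(\vd b)$. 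For $L_-=0$ I would argue symmetrically: as $x\to-\infty$ the numerator $\int_{[0,\infty)}e^{bx-b^2t/2}\mu(\vd b)$ stays bounded (it converges to $\mu(\{0\})$), whereas the denominator diverges. Injectivity is immediate from strict monotonicity, completing the bijection claim.

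I expect the only delicate point to be this last divergence of the denominator at $x\to-\infty$: one cannot simply say ``mass on $(-\infty,0)$ blows up'', because that mass could sit on very negative values of $b$ where the Gaussian factor $e^{-b^2t/2}$ damps the growth of $e^{bx}$. The clean fix is to use \eqref{massonbothsides} to select a \emph{bounded} subinterval $(-M,-\eps)\subset(-\infty,0)$ of positive $\mu$-measure, on which $e^{-b^2t/2}\geq e^{-M^2t/2}$ while $e^{bx}\geq e^{\eps\lvert x\rvert}\to\infty$, forcing the denominator to infinity. Everything else reduces to the monotonicity and regularity already assembled above.
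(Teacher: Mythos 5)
Your proof is correct and follows essentially the same route as the paper: strict monotonicity comes from the derivative formula \eqref{posder} combined with the fact that $\mu_{t,x}$ charges both $(-\infty,0)$ and $[0,\infty)$ under \eqref{massonbothsides}, and the bijection claim reduces to computing the limits of $\pi(t,\cdot)$ at $\pm\infty$. The only cosmetic difference is in the limit computation: the paper writes $\pi(t,x)=1/(1+A(t,x))$ with $A$ as in \eqref{E:A} and applies monotone convergence to that ratio, whereas you use dominated convergence for $x\to+\infty$ and a truncation to a bounded subinterval of positive $\mu$-mass for the divergence as $x\to-\infty$ --- the same ``delicate point'' that monotone convergence dispatches directly.
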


\begin{proof}
First note that 
\[(\Dr-\E_{t,x}[\Dr])(\Ind_{[0,\infty)}(\Dr)-\Ind_{[0,\infty)}(\E_{t,x}[\Dr]))\geq 0\] 
and that 
\[\P_{t,x}\left((\Dr-\E_{t,x}[\Dr])(\Ind_{[0,\infty)}(\Dr)-\Ind_{[0,\infty)}(\E_{t,x}[\Dr])) >0 \right)>0\]
since \eqref{massonbothsides} implies $\mu_{t,x}\lt((-\infty,0)\rt)>0$ and $\mu_{t,x}\lt([0, \infty)\rt)>0$.
Consequently, the inequality in \eqref{posder} is strict, so $x\mapsto \pi(t,x)$ is strictly increasing.

Next, note that
\[
	\pi(t, x) = \frac{1}{1+A(t, x)},
\]
where
\begin{IEEEeqnarray}{rCl}
\label{E:A}
	A(t,x) = \frac{\int_{(-\infty, 0)} e^{bx-  \frac{b^2}{2}t} \mu(\vd b)}{\int_{[0, \infty)} e^{bx-  \frac{b^2}{2}t} \mu (\vd b)}.
\end{IEEEeqnarray}
By monotone convergence, we find that $A(t,x) \tend 0$ as $x \tend \infty$ and $A(t,x) \tend \infty$ as $x \tend -\infty$. 
Consequently, $\lim_{x  \to\infty} \pi(t,x) =1$ and $\lim_{x \to -\infty} \pi(t,x) =0$, which finishes the proof.
\end{proof}

An immediate consequence of Proposition~\ref{f-1-1} is that for any fixed $t>0$, the spatial
inverse $\pi(t, \cdot)^{-1}: (0,1) \to \R$ exists. To facilitate intuition, we denote the inverse by $x(t, \cdot)$.

We end this subsection with a result that describes the long-term behaviour of the process $\Pi$.

\begin{proposition} 
\label{T:PiInf}
$\Pi_t \tend \Pi_\infty$ a.s. as $t \tend \infty$, where $\Pi_\infty$ is a Bernoulli random variable 
with $\P(\Pi_\infty=0)=\mu((-\infty, 0))$ and $\P(\Pi_\infty=1)=\mu([0, \infty))$.
\end{proposition}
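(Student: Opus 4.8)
The plan is to recognise $\Pi$ as a uniformly integrable martingale and then to identify its limit by means of the strong law of large numbers for Brownian motion. By definition $\Pi_t = \E[\Ind_{[0,\infty)}(\Dr)\,|\,\mathcal F^X_t]$, so $\Pi$ is an $\mathbb F^X$-martingale taking values in $[0,1]$ and is therefore uniformly integrable. By L\'evy's upward convergence theorem it converges, both almost surely and in $L^1$, to the limit
$\Pi_\infty := \E[\Ind_{[0,\infty)}(\Dr)\,|\,\mathcal F^X_\infty]$, where $\mathcal F^X_\infty := \sigma\bigl(\bigcup_{t\geq 0}\mathcal F^X_t\bigr)$. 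The almost sure convergence along the continuous parameter $t$ is justified by the continuity of $t\mapsto\Pi_t=\pi(t,X_t)$, which follows from the continuity of the observation path $X$ together with Proposition~\ref{f-1-1}.

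The crucial step is to show that the sign of the drift is $\mathcal F^X_\infty$-measurable. Writing $X_t/t = \Dr + W_t/t$ and using that $W_t/t \to 0$ almost surely as $t\to\infty$ (the strong law of large numbers for Brownian motion), we obtain $X_t/t \to \Dr$ almost surely. In particular $X_n/n \to \Dr$ almost surely along $n\in\mathbb N$; since each $X_n/n$ is $\mathcal F^X_n \subseteq \mathcal F^X_\infty$-measurable, the almost sure limit $\Dr$ is $\mathcal F^X_\infty$-measurable, and hence so is $\Ind_{[0,\infty)}(\Dr)$. Consequently
$\Pi_\infty = \E[\Ind_{[0,\infty)}(\Dr)\,|\,\mathcal F^X_\infty] = \Ind_{[0,\infty)}(\Dr)$ almost surely.

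It then follows immediately that $\Pi_\infty$ takes only the values $0$ and $1$, with $\{\Pi_\infty=1\}=\{\Dr\geq 0\}$ and $\{\Pi_\infty=0\}=\{\Dr<0\}$ up to null sets. Therefore $\P(\Pi_\infty=1)=\P(\Dr\geq 0)=\mu([0,\infty))$ and $\P(\Pi_\infty=0)=\P(\Dr<0)=\mu((-\infty,0))$, which is precisely the claimed Bernoulli law.

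I expect the only genuine point requiring care to be the identification of the limit, namely the verification that $\Dr$ can be recovered from the entire observation path; this rests entirely on the elementary fact $W_t/t\to 0$ almost surely. An alternative, more computational route would be to analyse the explicit ratio $A(t,X_t)$ from \eqref{E:A} directly, showing via a Laplace-type argument that $A(t,X_t)\to 0$ on $\{\Dr>0\}$ and $A(t,X_t)\to\infty$ on $\{\Dr<0\}$; however, this is considerably more delicate for a general prior $\mu$, so the martingale argument is preferable.
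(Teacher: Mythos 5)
Your proof is correct, but it identifies the limit by a genuinely different mechanism than the paper. Both arguments start the same way (bounded martingale, hence a.s.\ convergence) and both invoke the strong law of large numbers for Brownian motion, but the paper then conditions on $\{B=b\}$ for $b<0$ and applies Proposition~\ref{T:levelcurves} --- a Laplace-type estimate on $\pi(t,\alpha t)$ proved later in the paper, with a forward reference --- to get $\P(\Pi_\infty=0\,|\,B=b)=1$, and finally pins down $\P(\Pi_\infty=1)$ by the closing property $\E[\Pi_\infty]=\E[\Pi_0]$. You instead observe that $X_t/t\to\Dr$ a.s.\ makes $\Dr$ measurable with respect to $\mathcal F^X_\infty$, so that L\'evy's upward theorem gives $\Pi_\infty=\E[\Ind_{[0,\infty)}(\Dr)\,|\,\mathcal F^X_\infty]=\Ind_{[0,\infty)}(\Dr)$ a.s. This is cleaner and more self-contained (no level-curve asymptotics, no forward reference), and it actually proves something strictly stronger than the statement: the limit is not merely Bernoulli-distributed, it \emph{equals} the indicator of $\{\Dr\geq 0\}$, i.e.\ the posterior learns the true hypothesis; the paper's route, on the other hand, reuses machinery it needs anyway in Section~\ref{longterm}, so nothing is wasted there. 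One small point of rigour in your write-up: the existence of $\lim_{t\to\infty}\Pi_t$ over the \emph{continuous} parameter should be attributed to the martingale convergence theorem for right-continuous (here continuous) bounded martingales; L\'evy's upward theorem, which is a statement along sequences $t_n\uparrow\infty$, then identifies that limit as $\E[\Ind_{[0,\infty)}(\Dr)\,|\,\mathcal F^X_\infty]$. Path continuity plus sequential convergence alone does not yield convergence along the continuum, so the two theorems should be combined in that order; with that rearrangement your argument is complete.
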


\begin{proof} 
Firstly, since $\Pi_t=\E[\Ind_{[0,\infty)} (B) \,|\, \F_t^X]$ is a bounded martingale, by the martingale convergence theorem, the pointwise limit $\Pi_\infty := \lim_{t \tend \infty} \Pi_t$ is a well-defined random variable closing the martingale $\Pi$.
By the law of large numbers for Brownian motion and Proposition~\ref{T:levelcurves} below (the proof of which is independent of
the current result), for any $b<0$ in the support of $\mu$ we have 
\[
\P\lt( \Pi_\infty =0 \,|\, B=b\rt)=1,
\]
so
\[
\P\lt( \Pi_\infty =0\rt)= \int_{(-\infty, 0)} \P \lt(\Pi_\infty =0 \,|\, B=b\rt) \mu (\vd b) = \mu((-\infty, 0)).
\]
Hence as $\Pi_\infty$ can only take values in $[0,1]$, the fact that $\E[\Pi_\infty]=\E[\Pi_0]= \mu([0,\infty))$ implies $\P\lt( \Pi_\infty =1\rt)=\mu([0,\infty))$.  
\end{proof}

\subsection{SDE for the conditional probability of non-negative drift}
Assuming that $\Dr$ has a first moment, the conditional expectation of $\Dr$ exists and is given by 
\begin{IEEEeqnarray}{rCl}
\label{condexp}
\E[ \Dr\vert \mathcal F_t^X] = \frac{\int_\R b e^{bX_t -b^2t/2} \mu(\vd b)}{\int_\R e^{bX_t-b^2t/2} \mu(\vd b)},
\end{IEEEeqnarray}
compare \eqref{integrals}.
Moreover, the observation process $X$ is represented in terms of the innovation process
\[\hat W_t:=X_t-\int_0^t \E[ \Dr\vert \mathcal F_s^X]\,\vd s\]
as
\[\vd X_t=\E[ \Dr\vert \mathcal F_t^X]\,\vd t + \vd\hat W_t.\]
Here $\hat W$ is a standard $\mathbb F^X$-Brownian motion (see \cite[Proposition 2.30 on p.~33]{BC09}). Moreover, writing $\mathbb F^{\hat W}=\{\F^{\hat W}_t\}_{t \geq 0}$ for the completion of the filtration \mbox{$\{ \sigma(\hat W_s : 0\leq s \leq t) \} _{t \geq 0 }$}, we have $\mathbb F^X=\mathbb F^{\hat W}$ (see the remark on p.~35 in \cite{BC09}). 

From now onwards, the following integrability condition on $\mu$ will be imposed throughout the article.
\begin{flalign}
\label{integrability}
\text{\bf{Assumption. }} &  \int_\R e^{\ep b^2}\mu(\vd b)<\infty \quad \text{for some } \ep>0.&
\end{flalign}
Note that this assumption is a minor restriction on our hypothesis testing problem since, given any probability distribution $\mu$, 
the distributions $\mu_{t,x}$ all satisfy \eqref{integrability} for $t>0$. In other words, no matter what prior distribution $\mu$ 
one starts with, the condition \eqref{integrability} will be satisfied after any infinitesimal time of observation. Also, note that 
the assumption allows us to extend the definition of $\mu_{t,x}$ in \eqref{E:mu} to $t=0$.
Moreover, if we have a prior distribution $\xi$ on $\R$ given by
\[	
	\xi(\vd b) := \frac{e^{\ep b^2}\mu(\vd b)}{\int_\R e^{\ep b^2}\mu(\vd b)},
\]
then
\begin{IEEEeqnarray}{rCl} \label{E:mualt}
\mu_{0,x} (\vd b) = \xi_{2\ep, x}(\vd b) := \frac{e^{bx-b^2 (2\ep)/2} \xi(\vd b)}{\int_\R e^{bx-b^2 (2\ep)/2} \xi(\vd b) }.
\end{IEEEeqnarray}
Consequently, the distribution $\mu_{0,x}$ can be identified with a conditional distribution at time 0 given that the prior distribution 
at time $-2\ep$ was $\xi$ and the current value of the observation process is $x$.
This gives us a generalisation of the notion of the starting point of the observation process $X$ to allow \mbox{$X_0=x \neq 0$},
and we may regard time 0 as an interior point of the time interval.

A closer look at the condition \eqref{integrability} and the expression \eqref{E:pi} assures that the standard differentiability lemma can be applied to differentiate $\pi(t,x)$ with respect to both variables multiple times inside $(-2\ep,\infty)\times \R$. Applying Ito's formula to $\Pi_t=\pi(t,X_t)$, we find that 
\begin{IEEEeqnarray*}{rCl}
\vd \Pi_t &=& \left(\pt_1\pi(t,X_t) +\E[ \Dr\vert \mathcal F_t^X]\pt_2\pi(t,X_t) + \frac{1}{2}\pt^2_2\pi(t,X_t)\right)\ud t+
\pt_2 \pi(t,X_t) \ud \hat W_t  \\
&=& \pt_2 \pi(t,x(t,\Pi_t)) \ud \hat W_t, 
\end{IEEEeqnarray*}
where the second equality is verified using the expression \eqref{condexp} and
\begin{equation*}
	\pt_1\pi(t,x) = - \int_{[0,\infty)} \frac{b^2}{2} \mu_{t,x}(\vd b) + \int_{[0,\infty)} \mu_{t,x}(\vd b) \int_\R \frac{b^2}{2} \mu_{t,x}(\vd b),
\end{equation*}
\begin{equation} \label{E:d2Pi}
\pt_2\pi(t,x) =  \int_{[0,\infty)} b \mu_{t,x}(\vd b) -  \int_\R b \mu_{t,x}(\vd b) \int_{[0,\infty)} \mu_{t,x}(\vd b), 
\end{equation}
\begin{IEEEeqnarray*}{rCl} 
\pt^2_2\pi(t,x) &=& \int_{[0,\infty)} b^2 \mu_{t,x}(\vd b) -2 \int_\R b \mu_{t,x}(\vd b) \int_{[0,\infty)} b \mu_{t,x}(\vd b) \\
&& -\int_\R b^2  \mu_{t,x}(\vd b) \int_{[0, \infty)} \mu_{t,x}(\vd b)  + 2 \int_{[0, \infty)} \mu_{t,x}(\vd b) \lt( \int_\R b \mu_{t,x}(\vd b) \rt)^2.
\end{IEEEeqnarray*}
Thus the dynamics of $\Pi_t$ are specified by a zero drift and the volatility 
\begin{IEEEeqnarray}{rCl}
\label{E:sig}
\sigma(t,\Pi_t)=\pt_2 \pi(t,x(t, \Pi_t)) ,
\end{IEEEeqnarray}
being a positive function of the current time and the current value of $\Pi$.
Using \eqref{E:ux}, the volatility function can be expressed as
\begin{IEEEeqnarray}{rCl}
\label{sigma}
\notag
\sigma(t,\pi) &=&
\E_{t,x(t,\pi)}[\Dr \Ind_{\{ \Dr\geq0\}}] - \P_{t,x(t,\pi)}\lt( \Dr \geq 0  \rt) \E_{t,x(t,\pi)}[\Dr ]\\
&=& (1-\pi)\E_{t,x(t,\pi)}[\Dr \Ind_{\{ \Dr\geq 0\}}] -\pi \E_{t,x(t,\pi)}[\Dr \Ind_{\{ \Dr< 0\}}].
\end{IEEEeqnarray}

\begin{example}{\bf (The two-point distribution).}
Assume that $\P(\Dr=a_1)=1-p$ and $\P(\Dr=a_2)=p$ for some constants $a_1<0 \leq a_2$ and $p\in (0,1)$.
Then
\[\P(B=a_2\vert X_t=x)=\pi(t,x)=\frac{p e^{a_2x-a_2^2t/2}}{(1-p)e^{a_1x-a_1^2t/2} +pe^{a_2x-a_2^2t/2}},\]
and
\[\sigma(t,\pi)=(a_2-a_1)\pi(1-\pi).\]
This example with a two-point prior distribution is a special case of the Wonham filter.
\end{example}

\begin{example}{\bf (The normal distribution).}
Assume that $\mu$ is the normal distribution with mean $m$ and variance $\gamma^2$. Then
the conditional distribution $\P(\cdot\vert X_t=x)=\mu_{t,x}$ is also normal but with mean $\frac{m +\gamma^2x}{1+t\gamma^2}$
and variance $\frac{\gamma^2}{1+t\gamma^2}$.
Consequently, 
\begin{IEEEeqnarray}{rCl} \label{E:pinormal}
\pi(t,x)=\Phi\left(\frac{m+\gamma^2 x}{\gamma\sqrt{1+t\gamma^2}}\right)
\end{IEEEeqnarray}
and
\[\sigma(t,\pi)=\varphi(\Phi^{-1}(\pi))\frac{\gamma}{\sqrt{1+ t\gamma^2}},\]
where
\[\varphi(z)=\frac{1}{\sqrt{2\pi}} e^{-z^2/2}\,dz\]
and
\[\Phi(y)=\int_{-\infty}^y \varphi(z)\,dz\]
are the density and the cumulative distribution of the standard normal random variable, respectively.
Note that this instance of a normal prior distribution is a special case of the well-known Kalman-Bucy filter, 
see for example \cite[Chapter 6]{bO}.
\end{example}

\subsection{Volatility of the conditional probability process}
\label{4}

In this section we study the volatility function $\sigma$. The main result, Corollary~\ref{T:SD}, states that the 
volatility is non-increasing as a function of time.

Let $\pi \in (0,1)$ be a fixed number and consider the map $x(\cdot, \pi) : [0,\infty) \into \R$ sending $t \mapsto x(t, \pi)$. Note that the graph of this function is the trajectory that the process $X_t$ has to follow in order for the conditional probability process $\Pi_t$ to stay constant at the value $\pi$. Thus we call $x(\cdot, \pi)$ \emph{the $\pi$-level curve}. 
Some handy regularity of $x(\cdot, \cdot)$ and $\sigma(\cdot, \cdot)$ is brought to light in the following.

\begin{proposition} 
\label{T:C1}
The functions
	$x(\cdot, \cdot) : [0, \infty) \times (0,1) \into \R$ and
	$\sigma(\cdot, \cdot) : [0, \infty) \times (0,1) \into \R$ are both $C^1$. 
\end{proposition}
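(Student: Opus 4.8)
The plan is to deduce everything from the joint smoothness of the map $\pi(t,x)$ together with the implicit function theorem, since $x(t,\cdot)$ is by definition the spatial inverse of $\pi(t,\cdot)$ and, by \eqref{E:sig}, $\sigma$ is just $\pt_2\pi$ read along the level curves. So the whole statement should reduce to regularity of $\pi$ plus a non-degeneracy of $\pt_2\pi$.

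First I would record that $\pi$ is not merely differentiable but continuously differentiable (indeed $C^\infty$) on the extended strip $(-2\eps,\infty)\times\R$. As already noted after \eqref{integrability}, the integrability assumption lets one differentiate \eqref{E:pi} under the integral sign arbitrarily often in both variables; the resulting partial derivatives are again ratios of integrals of polynomial-times-$e^{bx-b^2t/2}$ factors against $\mu$, and a further application of dominated convergence shows each such derivative is jointly continuous in $(t,x)$. Hence $\pi\in C^1$ there. Moreover, by Proposition~\ref{f-1-1} the map $\pi(t,\cdot)$ is strictly increasing, so the strict inequality in \eqref{posder} yields
\[\pt_2\pi(t,x)=\sigma(t,\pi(t,x))>0\]
for every $(t,x)$, which is the non-degeneracy needed below.

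Next I would apply the implicit function theorem to $F(t,x,\pi):=\pi(t,x)-\pi$. This $F$ is $C^1$, and $\pt_x F=\pt_2\pi>0$ never vanishes, so the equation $F(t,x,\pi)=0$ determines $x$ as a $C^1$ function of $(t,\pi)$; this function is precisely the inverse $x(t,\pi)$, and differentiation of the identity $\pi(t,x(t,\pi))=\pi$ gives
\[\pt_\pi x(t,\pi)=\frac{1}{\pt_2\pi(t,x(t,\pi))},\qquad \pt_1 x(t,\pi)=-\frac{\pt_1\pi(t,x(t,\pi))}{\pt_2\pi(t,x(t,\pi))},\]
both continuous. Thus $x(\cdot,\cdot)\in C^1$. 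Finally, using \eqref{E:sig}, the identity $\sigma(t,\pi)=\pt_2\pi(t,x(t,\pi))$ exhibits $\sigma$ as the composition of the $C^1$ map $\pt_2\pi$ with the $C^1$ map $(t,\pi)\mapsto(t,x(t,\pi))$, hence $\sigma\in C^1$ by the chain rule.

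The one point requiring care is the behaviour at the boundary $t=0$: since the construction around \eqref{E:mualt} exhibits $t=0$ as an interior point of the enlarged time interval $(-2\eps,\infty)$, the implicit function theorem applies at $t=0$ as well, giving one-sided $C^1$-regularity up to the boundary. I expect the main (and essentially only) technical obstacle to be the careful verification that the partial derivatives of $\pi$ obtained by differentiating under the integral sign are genuinely jointly continuous, so that $\pi$ is $C^1$ rather than merely separately differentiable; once joint $C^1$-regularity and the strict positivity of $\pt_2\pi$ are secured, the implicit function theorem and the chain rule do the rest.
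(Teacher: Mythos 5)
Your proof is correct and follows essentially the same route as the paper: both arguments establish joint $C^1$-regularity of $\pi$ by differentiation under the integral sign, use the strict positivity of $\pt_2\pi$ from Proposition~\ref{f-1-1}, invert on the enlarged time interval so that $t=0$ is interior, and obtain $\sigma$ via the chain rule. The only cosmetic difference is that the paper applies the inverse function theorem to the map $(t,x)\mapsto(t,\pi(t,x))$, whereas you apply the implicit function theorem to $\pi(t,x)-\pi=0$; these are equivalent formulations of the same step and yield the same derivative formulas for $x(t,\pi)$.
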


\begin{proof}
Define $F: (-\ep, \infty) \times \R \into (-\ep, \infty) \times (0,1)$ by $F(t, x) = (t, \pi(t, x))$, where $\ep$ is as in the assumption (\ref{integrability}). The function $F$ is $C^1$, which is evident by applying the standard differentiation lemma as in the proof of Proposition \ref{incr}. The Jacobian matrix of $F$ is
\[
	J_F(t,x)= \lt( \begin{array}{cc} 1 & 0 \\ \pt_1 \pi(t,x) & \pt_2 \pi(t,x) \end{array} \rt).
\]
Since $F$ is invertible and $\det(J_F(t,x)) = \pt_2\pi(t,x) >0$ for all $t > -\ep$ and all $x \in \R$, the inverse function theorem 
tells us that the inverse of $F$ is also $C^1$, with
\[
J_{F^{-1}}(F(t,x))= \lt(J_F(t,x) \rt)^{-1}.
\]
Consequently, $x(\cdot, \cdot)$ is $C^1$ on $(-\ep, \infty) \times (0,1)$  with the derivatives $\pt_1 x(t, \pi)=-\pt_1 \pi(t,x(t,\pi)) / \pt_2 \pi(t,x(t,\pi))$ and $\pt_2 x(t,\pi) = 1/\pt_2 \pi(t, x(t,\pi))$. 
Finally, since a product of continuous functions is continuous, by the chain rule, $\sigma(\cdot, \cdot)$ is continuously differentiable on $(-\ep, \infty) \times (0,1)$ and so on $[0, \infty) \times (0,1)$. 
\end{proof}

Next, denoting the initial value $\Pi_0$ by $\pi_0 \in (0,1)$, we show that the tails of the conditional distribution 
$\mu_{t,x}$ are decreasing along the level curve $x(\cdot,\pi_0)$.

\begin{proposition} 
\label{T:decrtails} \item
\begin{enumerate}
\item
If $a \geq 0$, then for any $t>0$,
\begin{IEEEeqnarray}{rCl}
\label{E:tails}
	\P \lt(\Dr>a | X_t = x(t, \pi_0) \rt) - \P(\Dr>a) \leq 0. 
\end{IEEEeqnarray}
Supposing $\mu((a, \infty)) >0$, the inequality above is strict if and only if $\mu ([0,a])>0$.
\item
Likewise, if $a < 0$, then for any $t>0$, 
\begin{IEEEeqnarray}{rCl}
\label{T:decrtailsn}
	\P \lt(\Dr<a | X_t = x(t, \pi_0) \rt) - \P(\Dr<a) \leq 0. 
\end{IEEEeqnarray}

Supposing $\mu((-\infty, a)) >0$, the inequality above is strict if and only if $\mu([a, 0))>0$.
\end{enumerate}
\end{proposition}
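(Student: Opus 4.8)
Throughout fix $t>0$ and set $x^*:=x(t,\pi_0)$. The plan is to compare the conditional law $\mu_{t,x^*}$ directly with the prior $\mu$ through the likelihood ratio
\[
w(b):=\frac{\vd\mu_{t,x^*}}{\vd\mu}(b)=\frac{e^{bx^*-b^2t/2}}{\int_\R e^{b'x^*-(b')^2t/2}\mu(\vd b')},
\]
so that the left-hand sides of \eqref{E:tails} and \eqref{T:decrtailsn} become $\int_{(a,\infty)}(w-1)\,\mu(\vd b)$ and $\int_{(-\infty,a)}(w-1)\,\mu(\vd b)$ respectively. Since $\pi_0=\pi(0,0)=\mu([0,\infty))$ and the level curve keeps $\pi(t,x^*)=\pi_0$, the constraint $\mu_{t,x^*}([0,\infty))=\mu([0,\infty))$ yields the two balance identities $\int_{[0,\infty)}(w-1)\,\mu(\vd b)=0$ and $\int_{(-\infty,0)}(w-1)\,\mu(\vd b)=0$. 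Moreover $\log w(b)=bx^*-b^2t/2-\mathrm{const}$ is a strictly concave parabola, so the superlevel set $\{w\geq 1\}$ is an interval $[b_-,b_+]$.

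The heart of the matter is to show that $w(0)\geq 1$, equivalently $\int_\R e^{bx^*-b^2t/2}\,\mu(\vd b)\leq 1$. I would argue this by the sign of $x^*$. If $x^*\geq 0$, the maximiser $x^*/t$ of $w$ lies in $[0,\infty)$, so $w$ is non-decreasing on $(-\infty,0]$; hence $w(0)$ is at least the $\mu$-average of $w$ over $(-\infty,0)$, which equals $1$ by the negative-half balance identity together with $\mu((-\infty,0))>0$ from \eqref{massonbothsides}. If $x^*\leq 0$, the symmetric estimate on $[0,\infty)$ using $\mu([0,\infty))>0$ again gives $w(0)\geq 1$. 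By concavity of $\log w$, the bound $w(0)\geq 1$ forces $b_-\leq 0\leq b_+$, which is precisely the single-crossing property: on $[0,\infty)$ one has $w\geq 1$ on $[0,b_+]$ and $w\leq 1$ on $(b_+,\infty)$, and on $(-\infty,0)$ one has $w\leq 1$ on $(-\infty,b_-)$ and $w\geq 1$ on $[b_-,0)$.

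The conclusion is then immediate. For $a\geq 0$ with $a\leq b_+$, the positive-half balance gives $\int_{(a,\infty)}(w-1)\,\mu(\vd b)=-\int_{[0,a]}(w-1)\,\mu(\vd b)\leq 0$ because $w\geq 1$ on $[0,a]$, while for $a>b_+$ the integrand on $(a,\infty)$ is already non-positive; this proves \eqref{E:tails}, and \eqref{T:decrtailsn} follows in the same way from the negative-half balance and $b_-\leq 0$. For the strictness in (1): the clean case is $\mu([0,a])=0$, where $\mu_{t,x^*}((a,\infty))=\mu_{t,x^*}([0,\infty))=\pi_0=\mu((a,\infty))$, giving equality and hence the ``only if'' direction; conversely, when $\mu([0,a])>0$ the single-crossing structure forces $\int_{[0,a]}(w-1)\,\mu(\vd b)>0$ (here one uses that $w=1$ only at the two isolated points $b_\pm$), so the inequality is strict. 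Part (2) is entirely symmetric.

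I expect the step $w(0)\geq 1$ to be the main obstacle: without it the superlevel interval $[b_-,b_+]$ could sit strictly inside $(0,\infty)$, so $w-1$ would run negative, then positive, then negative across $[0,\infty)$, destroying the single-crossing structure and in fact reversing the claimed inequality for small $a$. The monotonicity-plus-balance argument above, which crucially invokes \eqref{massonbothsides} on the half-line opposite to the sign of $x^*$, is exactly what rules this configuration out.
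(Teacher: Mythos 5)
Your proof of the non-strict inequalities \eqref{E:tails} and \eqref{T:decrtailsn} is correct and takes a genuinely different route from the paper's. The paper works with the unnormalised weights $e^{bx(t)-b^2t/2}$ and splits into the cases $a\geq 2x(t)/t$ and $0<a<2x(t)/t$ (the latter forcing $x(t)>0$): in the first case all weights on $[0,a]$ and on $(a,\infty)$ are compared with the single value $e^{ax(t)-a^2t/2}$; in the second case one first shows that the average weight over $[0,\infty)$ is strictly below one and then checks that deleting $[0,a]$, where the weight is at least one, can only decrease the average over what remains. You instead normalise to the likelihood ratio $w=\vd\mu_{t,x^*}/\vd\mu$, extract the two balance identities from the level-curve constraint, and reduce everything to the single inequality $w(0)\geq 1$, proved by your monotonicity-plus-balance trick; the single-crossing structure of $\{w\geq 1\}=[b_-,b_+]$ then yields both parts at once. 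This is tidier, avoids any case split on $a$, and makes explicit where \eqref{massonbothsides} enters; both arguments ultimately rest on the log-concavity of $b\mapsto e^{bx-b^2t/2}$.

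However, your strictness argument (the direction ``$\mu([0,a])>0$ implies strict inequality'') has a genuine gap. You claim that single-crossing forces $\int_{[0,a]}(w-1)\,\mu(\vd b)>0$, ``using that $w=1$ only at the two isolated points $b_\pm$.'' This is insufficient: $\mu$ is a general prior and may have atoms (the paper's setting explicitly includes purely atomic priors), so $\mu([0,a])>0$ is a priori compatible with all of the $\mu$-mass of $[0,a]$ sitting exactly on $\{b_-,b_+\}\cap[0,a]$, in which case $\int_{[0,a]}(w-1)\,\mu(\vd b)=0$ and your argument produces only equality. The bound $b_-\leq 0\leq b_+$ that you establish does not exclude this: $b_-$ could equal $0$ and carry an atom, or $b_+$ could lie inside $[0,a]$ and carry an atom.

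The gap is fixable with the tools you already have. First, $b_-<0$ strictly: if $b_-=0$, then $w<1$ everywhere on $(-\infty,0)$, contradicting the negative-half balance identity since $\mu((-\infty,0))>0$ by \eqref{massonbothsides}; hence $\{b_-,b_+\}\cap[0,a]\subseteq\{b_+\}$. Second, split on the location of $b_+$. If $a<b_+$, then $[0,a]\subset(b_-,b_+)$, where $w>1$ strictly, so $\int_{[0,a]}(w-1)\,\mu(\vd b)>0$ and the positive-half balance identity gives strictness in \eqref{E:tails}. If $a\geq b_+$, then $w<1$ strictly on $(a,\infty)$, and $\mu((a,\infty))>0$ gives $\int_{(a,\infty)}(w-1)\,\mu(\vd b)<0$ directly; in this case the strictness comes from the $(a,\infty)$ side, and the positivity of $\int_{[0,a]}(w-1)\,\mu(\vd b)$ is true only as a consequence of the balance identity, not of single-crossing alone. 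Part (2) is repaired by the symmetric case split on the position of $b_-$ (there the interval $[a,0)$ excludes $0$, so $b_+\geq 0$ already suffices). For comparison, the paper gets strictness from strict pointwise comparisons: in its Case 1 from the weights on $(a,\infty)$ lying strictly below $e^{ax(t)-a^2t/2}$, and in its Case 2 from the strict inequality $\int_{[0,\infty)}e^{bx(t)-b^2t/2}\mu(\vd b)<\mu([0,\infty))$.
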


\begin{proof} 
We prove only the first of the two claims as the proof of the second one follows the same argument with straightforward modifications. 

In the case $\mu((a,\infty))=0$, the claim holds trivially with equality in (\ref{E:tails}). Thus we assume that $\mu((a,\infty))>0$ in what follows. Writing $x(t)$ instead of $x(t, \pi_0)$ for brevity, we note that using $\mu_{t, x(t)}([0, \infty)) = \mu([0, \infty))$, the inequality (\ref{E:tails}) is easily seen to be equivalent to 
\begin{IEEEeqnarray}{rCl} 
\label{E:etails1}
\frac{\int_{(a,\infty)} e^{bx(t)-b^2  \frac{t}{2}} \mu(\vd b)}{\int_{[0,\infty)} e^{bx(t)-b^2 \frac{t}{2}} \mu(\vd b)} \leq
\frac{\mu((a,\infty)) }{\mu([0, \infty))}\,.
\end{IEEEeqnarray}
Now, we will split the proof into consideration of two separate cases.

\noindent \underline{Case 1:} $a\geq 2x(t)/t$. Here 
\begin{IEEEeqnarray*}{rCl}
\frac{\int_{[0,a]}{e^{bx(t)-b^2  \frac{t}{2}}} \mu(\vd b)}{\int_{(a,\infty)}{e^{bx(t)-b^2  \frac{t}{2}}} \mu(\vd b)} &\geq& \frac{e^{ax(t)-a^2  \frac{t}{2}}\mu([0, a])}{e^{ax(t)-a^2  \frac{t}{2}}\mu((a,\infty))} \\
&=& \frac{\mu([0, a])}{\mu((a,\infty))},
\end{IEEEeqnarray*}
which is equivalent to (\ref{E:etails1}).
Since $\mu((a, \infty))>0$, the inequality above is strict if and only if \mbox{$\mu([0,a])>0$}.

\noindent \underline{Case 2:} $x(t)>0$ and $0< a < 2 x(t)/t$. Using that $\mu_{t, x(t)}([0, \infty)) = \mu([0, \infty))$, we get
\begin{IEEEeqnarray*}{rCl}
\int_{[0, \infty)} e^{bx(t) -b^2 t/2} \mu (\vd b) &=& \mu([0,\infty)) \frac{\int_{(-\infty, 0)} e^{bx(t) -b^2 t/2} \mu (\vd b)}{\mu((-\infty,0))} \nonumber \\
&<& \mu([0, \infty)),
\end{IEEEeqnarray*}
where the inequality holds since $x(t)>0$. Hence rewriting
\begin{IEEEeqnarray*}{rCl}
\frac{\int_{(a, \infty)} e^{bx(t) -b^2 t/2} \mu (\vd b)}{\mu((a,\infty))} &=& 
\frac{\int_{[0, \infty)}e^{bx(t) -b^2 t/2} \mu (\vd b) - \int_{[0,a]} e^{bx(t) -b^2 t/2} \mu (\vd b)}{\mu([0,\infty)) - \mu([0, a])}
\end{IEEEeqnarray*} 
and keeping in mind that $0< a < 2 x(t)/t$, one clearly sees that
\begin{IEEEeqnarray}{rCl}
\frac{\int_{(a, \infty)} e^{bx(t) -b^2 t/2} \mu (\vd b)}{\mu((a,\infty))}&\leq& \frac{\int_{[0, \infty)} e^{bx(t)-b^2 t/2} \mu (\vd b)}{\mu([0, \infty))} \label{E:etails3}
\end{IEEEeqnarray}
with the strict inequality if and only if $\mu([0,a])>0$. 
As (\ref{E:etails3}) is equivalent to (\ref{E:etails1}), the proof is complete. 
\end{proof}

\begin{corollary} \label{T:SD}
For any $\pi \in (0,1)$ fixed, the volatility function \mbox{$\sigma(\cdot,\pi) : [0,\infty)\tend \R$} defined in (\ref{E:sig}) is non-increasing in time. Moreover, it is strictly decreasing for any initial prior $\mu$ except a two-point distribution in which case $t \mapsto \sigma(t,\pi)$ is a constant function.
\end{corollary}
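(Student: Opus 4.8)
The plan is to express the volatility $\sigma(t,\pi)$ through the tails of the conditional distribution $\mu_{t,x(t,\pi)}$ and then feed in the tail monotonicity established in Proposition~\ref{T:decrtails}. Starting from \eqref{sigma} and applying the layer-cake formula to the two truncated expectations, I would write
\[
\sigma(t,\pi)=(1-\pi)\int_0^\infty \P_{t,x(t,\pi)}\!\lt(\Dr>a\rt)\ud a+\pi\int_{-\infty}^0 \P_{t,x(t,\pi)}\!\lt(\Dr<a\rt)\ud a,
\]
the two integrals being finite since \eqref{integrability} forces all moments of $\mu_{t,x}$ to be finite. In this form the coefficients $1-\pi$ and $\pi$ are positive and time-independent, so the entire time dependence of $\sigma(\cdot,\pi)$ is carried by the conditional tail probabilities, which is precisely the quantity controlled by Proposition~\ref{T:decrtails}.

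To compare $\sigma(t,\pi)$ with $\sigma(s,\pi)$ for arbitrary $0\le s<t$, I would use the flow (consistency) property of the conditional laws, $(\mu_{s,y})_{u,z}=\mu_{s+u,y+z}$, which is immediate from \eqref{E:mu}. Treating $\nu:=\mu_{s,x(s,\pi)}$ as a new prior (it satisfies \eqref{integrability} and \eqref{massonbothsides}, and $\nu([0,\infty))=\pi$), the flow property identifies the $\pi$-level curve of the relabelled problem with $x(s+\cdot,\pi)-x(s,\pi)$ and its volatility with $\sigma(s+\cdot,\pi)$; in particular $\pi$ is the initial level for $\nu$ and its volatility there equals $\sigma(s,\pi)$. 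Applying Proposition~\ref{T:decrtails} to the prior $\nu$ then yields $\P_{t,x(t,\pi)}(\Dr>a)\le\nu((a,\infty))$ for $a\ge0$ and $\P_{t,x(t,\pi)}(\Dr<a)\le\nu((-\infty,a))$ for $a<0$. Substituting into the tail representation and using positivity of $1-\pi$ and $\pi$ gives $\sigma(t,\pi)\le\sigma(s,\pi)$, the asserted monotonicity.

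For the dichotomy I would determine when equality $\sigma(s,\pi)=\sigma(t,\pi)$ is possible. Equality forces both nonnegative integrands
\[
\nu((a,\infty))-\P_{t,x(t,\pi)}(\Dr>a)\ \ (a\ge0),\qquad \nu((-\infty,a))-\P_{t,x(t,\pi)}(\Dr<a)\ \ (a<0)
\]
to vanish for Lebesgue-almost every $a$. By the strictness clause of Proposition~\ref{T:decrtails}, the first can vanish a.e.\ on $[0,\infty)$ only if the restriction of $\nu$ to $[0,\infty)$ is a single atom with no mass strictly above it, and the second forces the restriction to $(-\infty,0)$ to be a single atom with no mass strictly below it; hence $\nu$, and therefore the mutually absolutely continuous $\mu$, is a two-point distribution with one atom in $(-\infty,0)$ and one in $[0,\infty)$. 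Conversely, for such a prior the two-point example above gives $\sigma(t,\pi)=(a_2-a_1)\pi(1-\pi)$, constant in $t$. Thus, whenever $\mu$ is not two-point the gap is strictly positive for every pair $s<t$, which is strict monotonicity.

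The hard part will be the reduction to an arbitrary level $\pi$ and arbitrary time pairs: Proposition~\ref{T:decrtails} is phrased only for the initial level and only against the prior, so the crux is the careful use of the flow property to relabel $\mu_{s,x(s,\pi)}$ as a prior together with the verification that the relabelled volatility is exactly $\sigma(s+\cdot,\pi)$. The second delicate point is the support analysis in the equality case, where one must convert "the integrand vanishes for a.e.\ $a$" into the statement that the positive and negative parts of $\mu$ each collapse to a single atom.
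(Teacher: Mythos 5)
Your proposal is correct and follows essentially the same route as the paper: the paper's ``moving-frame'' argument is exactly your flow-property relabelling of $\mu_{s,x(s,\pi)}$ as a new prior, and its appeal to Proposition~\ref{T:decrtails} applied to the representation \eqref{sigma} is your tail/layer-cake comparison, with the layer-cake step and the equality-case support analysis merely left implicit in the paper.
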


\begin{proof}
A key to the proof is a realisation that it is sufficient to prove that $\sigma(0, \pi_0) \geq \sigma(s, \pi_0)$ for any $s > 0$; the rest will immediately follow by a `moving-frame' argument. More precisely, by `moving-frame' we mean that for any $\pi \in (0,1),$ $t\geq0$, one can think of $\mu_{t, x(t,\pi)}$ as the initial prior distribution at time zero and so immediately obtain that $\sigma(t, \pi) \geq \sigma(t+s, \pi)$ for any $s>0$.

Using a shorthand $x(t)$ for $x(t, \pi_0)$ as before, recall from (\ref{sigma}) that 
\begin{IEEEeqnarray*}{rCl}
\sigma(t, \pi_0)
= (1-\pi_0)\E_{t,x(t)}[\Dr \Ind_{\{ \Dr\geq 0\}}] -\pi_0 \E_{t,x(t)}[\Dr \Ind_{\{ \Dr< 0\}}].
\end{IEEEeqnarray*} 
Consequently, 
\begin{IEEEeqnarray*}{rCl}
\sigma(0,\pi_0)-\sigma(t,\pi_0)  &=& (1-\pi_0) \left(\E[\Dr \Ind_{\{ \Dr\geq 0\}}] - \E_{t,x(t)}[\Dr \Ind_{\{ \Dr\geq 0\}}]\right) \\
     && +\pi_0 \left(\E_{t,x(t)}[\Dr \Ind_{\{ \Dr< 0\}}]-\E [\Dr \Ind_{\{ \Dr< 0\}}]\right)\\
&\geq& 0
\end{IEEEeqnarray*}
by Proposition \ref{T:decrtails}. Moreover, by the same proposition, the inequality reduces to an equality if and only if $\mu$ is a two-point distribution.
\end{proof}

\begin{remark}
It seems difficult to find an easy intuitive argument for the monotonicity of the volatility function. As an example, 
consider a symmetric prior distribution, and a strictly positive time-point $t$ at which the observation process
satisfies $X_t=0$. Then the conditional distribution $\mu_{t,0}$ is also symmetric, so $\Pi_t=\Pi_0=1/2$. 
One certainly expects that $\mu_{t,0}$ is obtained from the prior distribution $\mu$ by pushing mass towards 
zero (this is also verified in Proposition~\ref{T:decrtails} above). One could expect that the $\Pi$-process of a 
distribution with a lot of mass close to zero is sensitive to small changes in the observation process since the 
mass easily may `spill over' to the other side of zero, and thus such a distribution
gives rise to a comparatively large volatility. On the other hand, a concentrated distribution makes it 
difficult to distinguish possible drifts from each other, and changes in the observation process would to a higher degree be attributed
to the Brownian fluctuations. This implies a slow learning process, which indicates a small volatility. Corollary~\ref{T:SD}
shows that the latter effect outweighs the former one.
\end{remark}

\section{Analysis of the optimal stopping problem}
\label{optstop}

In this section, we study the perpetual optimal stopping problem \eqref{V} and its \mbox{finite-horizon} counterpart under the integrability condition \eqref{integrability}. Most of the time the emphasis is on the perpetual case, though the corresponding results also hold for the finite horizon case by the same arguments. If the analogy is straightforward, we only comment on it, otherwise, more details are provided.

\subsection{The value function with arbitrary starting points}
Recall that
\[\vd\Pi_t =\sigma(t,\Pi_t) \ud \hat W_t,\] 
where
\[\sigma(t,\pi)=\pt_2 \pi(t,x(t,\pi)) >0\] 
for all $(t, \pi) \in [0, \infty)\times (0,1)$ (beware that $\pi(\cdot, \cdot)$ is a function, while $\pi$ is a real number). 
We embed the optimal stopping problem \eqref{V}, in which the starting point of the process $\Pi$ is given by
$\Pi_0:=\P(\Dr\geq 0)$, into the optimal stopping problem
\begin{IEEEeqnarray}{rCl}
\label{v}
v(t,\pi)=\inf_{\tau\in\mathcal T}\E\left[g(\Pi^{t,\pi}_{t+\tau}) +c \tau\right], \quad (t, \pi) \in [0,\infty)\times (0,1),
\end{IEEEeqnarray}
for the process $\Pi^{t,\pi}=\Pi$ given by
\begin{equation}
\left\{\begin{array}{ll} \label{E:PiSDE}
\vd \Pi_{t+s}=\sigma(t+s,\Pi_{t+s}) \ud \hat W_{t+s} \,, & (s>0)\\
\Pi_t=\pi,\end{array}\right.
\end{equation}
where $\mathcal T$ denotes the set of stopping times with respect to the completed filtration of $ \{\Pi^{t,\pi}_{t+s}\}_{s \geq 0}$. 
The SDE (\ref{E:PiSDE}) possesses a unique solution since $\sigma(\cdot, \cdot)$ is locally Lipschitz by Proposition \ref{T:C1}. 
Furthermore, the embedding has a consistent interpretation also at time $t=0$, which is given by \eqref{E:mualt} and the remark following it. Note that choosing $\tau=0$ gives $v(t,\pi)\leq g(\pi)$.

\begin{proposition}
\label{concave}
The value function $v(t,\pi)$ is concave in $\pi$ for any fixed $t\geq 0$.
\end{proposition}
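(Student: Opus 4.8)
The plan is to show that concavity of the terminal payoff $g$ is propagated by the dynamics of $\Pi$, the crucial structural input being that $\Pi$ is \emph{driftless} (a martingale). First I would pass to a time-discretised, and in the perpetual case horizon-truncated, version of the problem, allowing stopping only on a grid $t_k=t+kh$. The dynamic programming principle then reads $v_h(t_k,\cdot)=\min\{g,\,ch+P_k v_h(t_{k+1},\cdot)\}$, where $P_k w(\pi):=\E[w(\Pi^{t_k,\pi}_{t_{k+1}})]$ is the one-step transition operator. Since $g$ is concave, $ch$ is constant, and the pointwise minimum of two concave functions is concave, a backward induction in $k$ (started from the concave datum $g$) reduces the whole statement to a single assertion: that $P_k$ maps concave functions to concave functions. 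This is also the essence of the ``preservation of concavity'' results alluded to in the introduction, the sole nontrivial ingredient being the vanishing drift.

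The heart of the argument, and the step I expect to be the main obstacle, is precisely this preservation of concavity under $P_k$, where the zero drift of $\Pi$ is indispensable. I would fix a concave $w$ (first approximated by smooth concave functions, passing to the limit at the end) and consider $u(s,\pi):=\E[w(\Pi^{s,\pi}_{t_{k+1}})]$ for $s\in[t_k,t_{k+1}]$, which solves the backward Kolmogorov equation $\partial_s u+\tfrac12\sigma^2(s,\pi)\,\partial^2_\pi u=0$ with terminal data $u(t_{k+1},\cdot)=w$; the \emph{absence} of a first-order term is a direct consequence of $\Pi$ being a martingale. Setting $\phi:=\partial^2_\pi u$ and differentiating the equation twice in $\pi$ yields a linear parabolic equation for $\phi$ with bounded coefficients and terminal datum $\phi(t_{k+1},\cdot)=w''\le 0$. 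A parabolic maximum principle, applied after the substitution $e^{-Ks}\phi$ to absorb the zeroth-order coefficient, then shows that nonpositivity of $\phi$ is preserved backward in time, i.e. $\partial^2_\pi u\le 0$, which is the desired concavity of $P_k w=u(t_k,\cdot)$. The differentiations are legitimate because \eqref{integrability} makes $\pi(t,x)$, and hence $\sigma$, smooth in the interior (cf.\ Proposition~\ref{T:C1} and the remark preceding it); the only delicate point is the degeneracy of $\sigma$ near the inaccessible boundaries $\pi=0,1$, which I would handle by working on compact subintervals $[\delta,1-\delta]$ and using that these boundaries are not reached in finite time.

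Finally I would remove the discretisation: as the mesh $h\downarrow 0$ and, in the perpetual case, the truncation horizon tends to infinity, the discretised value functions $v_h(t,\cdot)$ converge pointwise to $v(t,\cdot)$ by standard approximation results for optimal stopping (here $g$ is bounded and $c>0$, so the cost term causes no difficulty). Since a pointwise limit of concave functions is concave, the concavity established for each $v_h(t,\cdot)$ passes to the limit, yielding concavity of $v(t,\cdot)$ for every fixed $t\ge 0$. The finite-horizon case is identical, the terminal condition $v^T(T,\cdot)=g$ being already concave.
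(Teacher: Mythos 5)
Your overall architecture coincides with the paper's: restrict stopping to a dyadic grid (truncating the horizon in the perpetual case), run the dynamic programming recursion, use that a pointwise minimum of concave functions is concave, reduce everything to preservation of concavity by the one-step transition operator of the martingale diffusion $\Pi$, and pass to the pointwise limit. The difference is that the paper does not prove the concavity-preservation step at all: it invokes \cite{JT} (noting that those results extend to a state space bounded above and below), whereas you attempt a self-contained PDE proof of it. That attempted proof contains a genuine gap, and it sits exactly at the point you yourself flag as ``the only delicate point.''

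The gap is the localization of the maximum principle. Writing $\phi=\partial^2_\pi u$, differentiating the backward equation twice gives
\begin{equation*}
\partial_s\phi+\tfrac12\sigma^2\,\partial^2_\pi\phi+2\sigma(\partial_\pi\sigma)\,\partial_\pi\phi
+\bigl((\partial_\pi\sigma)^2+\sigma\,\partial^2_\pi\sigma\bigr)\phi=0,
\end{equation*}
and you propose to apply the parabolic maximum principle on $[t_k,t_{k+1}]\times[\delta,1-\delta]$ after an exponential tilt. But the parabolic boundary of that rectangle consists of the terminal slice (where indeed $\phi=w''\le 0$) \emph{and} the lateral segments $\{\pi=\delta\}$ and $\{\pi=1-\delta\}$, on which you have no sign information for $\phi$ --- nonpositivity of $\partial^2_\pi u$ there is essentially the statement being proved. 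The inaccessibility of $0$ and $1$ for the process $\Pi$ is a statement about sample paths and supplies no lateral boundary data for the PDE satisfied by $\phi$. Nor can you escape by letting $\delta\to 0$ with a fixed tilt: the zeroth-order coefficient $(\partial_\pi\sigma)^2+\sigma\,\partial^2_\pi\sigma$ is in general unbounded near the endpoints (for a normal prior, $\partial_\pi\sigma(t,\pi)=-\gamma(t)\Phi^{-1}(\pi)$, so this coefficient tends to $+\infty$ as $\pi\to 0,1$), hence no single constant $K$ works uniformly and the comparison degenerates as $\delta\to 0$. This degeneracy of $\sigma$ at the endpoints is precisely why the convexity-preservation literature, including the reference \cite{JT} that the paper leans on, proceeds by other means (approximation and coupling/time-change arguments with careful limits) rather than by the naive ``differentiate twice and apply the maximum principle'' route. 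The repair is simple but external: at this step, cite \cite{JT} as the paper does, or replace your maximum-principle argument by a genuinely justified one (e.g.\ a Feynman--Kac representation of $\partial^2_\pi u$, with control of the auxiliary diffusion and of the potential term near the degenerate boundary), which is substantially more work than your sketch suggests. The remainder of your proof --- the Bermudan discretisation, the backward induction, and the passage to the pointwise limit --- is sound and is exactly the paper's argument.
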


\begin{proof}
This follows by a standard approximation argument using optimal stopping problems where stopping is only allowed 
at a discrete set of time-points, compare \cite{E}.

To outline this, denote by $\mathcal T_{t,n}$, where $t\leq n$, the set of stopping times in $\mathcal T$ 
taking values in $\{k 2^{-n},k=0,1,...,n2^{n}\}\cap[0,n-t]$, $n=1,2,...$, and let
\[v_n(t,\pi)=\inf_{\tau\in\mathcal T_{t,n}}\E\left[g(\Pi^{t,\pi}_{t+\tau}) + c\tau\right].\]
Then $v_n(n,\pi)=g(\pi)$ is concave in $\pi$. By preservation of concavity for martingale diffusions, 
see \cite{JT} (the results of \cite{JT} extend to the current setting with both an upper and a lower bound on the state space), 
$\pi\mapsto v_n(t,\pi)$ is concave also for $t\in (n-2^{-n},n)$. Next, at time $t=n-2^{-n}$
the value is given by dynamic programming as
\[v_n(t,\pi)=\min\left\{ g(\pi),\E \left[v_n(n,\Pi^{t,\pi}_n) + c2^{-n}\right]\right\},\]
which is concave (being the minimum of two concave functions). Proceeding recursively shows that 
$v_n$ is concave in $\pi$ at all times $t\in[0,n]$. Since $v_n$ converges pointwise to $v$ as $n\to\infty$, this implies
that also $v$ is concave in $\pi$.
\end{proof}

\begin{proposition}
\label{v-is-nondecr}
The value function $v(t,\pi)$ is non-decreasing in $t$ for every fixed $\pi\in(0,1)$. 
\end{proposition}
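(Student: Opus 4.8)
The plan is to establish the monotonicity in the form $v(t,\pi)\le v(t',\pi)$ for $0\le t<t'$ by a time-change coupling that exploits Corollary~\ref{T:SD}, namely that $\sigma(\cdot,\pi)$ is non-increasing in time, so that $\sigma(t'+s,a)\le\sigma(t+s,a)$ for all $s\ge0$ and $a\in(0,1)$. The intuition is that the process started at the later time $t'$ has uniformly smaller volatility, hence ``learns more slowly'' and must pay a larger observation cost $c\tau$ to drive the distribution of the stopped value to any given target; the later-started problem should therefore have a larger value.

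Concretely, I would let $\Pi^{(1)}_s:=\Pi^{t,\pi}_{t+s}$ solve $\vd\Pi^{(1)}_s=\sigma(t+s,\Pi^{(1)}_s)\,\vd\hat W_s$ with $\Pi^{(1)}_0=\pi$, and introduce a pathwise time change $B_r$ defined by
\[
B_0=0,\qquad B'_r=\frac{\sigma^2(t'+r,\Pi^{(1)}_{B_r})}{\sigma^2(t+B_r,\Pi^{(1)}_{B_r})}.
\]
Since $\sigma>0$ is continuous on $[0,\infty)\times(0,1)$ by Proposition~\ref{T:C1} and $\Pi^{(1)}$ stays in $(0,1)$ on finite time intervals, the right-hand side is continuous and takes values in $(0,1]$, so $B$ is well-defined, continuous, strictly increasing, $1$-Lipschitz, and adapted. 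The crucial property is $B_r\le r$: one has $B'_r\le 1$ precisely when $t'+r\ge t+B_r$, and a short contradiction argument at a first crossing time $r_1$ of the diagonal (where $B_{r_1}=r_1$, forcing $t'+r_1>t+r_1$ and hence $B'_{r_1}\le 1$, since $\sigma$ is non-increasing in time) rules out $B_r>r$.

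Next I would identify the law of $\tilde\Pi_r:=\Pi^{(1)}_{B_r}$. As a continuous-martingale time change it is again a continuous martingale, and a direct computation of its quadratic variation gives
\[
\tfrac{\vd}{\vd r}\langle\tilde\Pi\rangle_r=\sigma^2(t+B_r,\tilde\Pi_r)\,B'_r=\sigma^2(t'+r,\tilde\Pi_r),
\]
by the defining ODE for $B$. Hence $\tilde\Pi$ solves $\vd\tilde\Pi_r=\sigma(t'+r,\tilde\Pi_r)\,\vd\tilde W_r$ with $\tilde\Pi_0=\pi$, and by the strong uniqueness following from the local Lipschitz property of $\sigma$ (Proposition~\ref{T:C1}) it has the same law as $\Pi^{(2)}:=\Pi^{t',\pi}_{t'+\cdot}$. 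To conclude, for any stopping time $\tau$ admissible in the $t'$-problem, $B_\tau$ is a stopping time admissible in the $t$-problem with $B_\tau\le\tau$ and $\tilde\Pi_\tau=\Pi^{(1)}_{B_\tau}$, so $g(\Pi^{(1)}_{B_\tau})+cB_\tau\le g(\tilde\Pi_\tau)+c\tau$; taking expectations and using that $v(t,\pi)$ is an infimum gives $v(t,\pi)\le\E[g(\tilde\Pi_\tau)+c\tau]$, and infimizing over $\tau$ yields $v(t,\pi)\le v(t',\pi)$. Restricting to stopping times bounded by the remaining horizon handles the finite-horizon value $V^T$ identically.

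The main obstacle will be the rigorous handling of the time change rather than any single hard estimate: establishing well-posedness of the pathwise ODE for $B$ where the volatility ratio degenerates as $\Pi^{(1)}$ approaches $\{0,1\}$ (kept under control by the a priori bound $0<B'_r\le 1$), verifying the adaptedness needed for $B_\tau$ to be a genuine stopping time of the $t$-problem's filtration, and pinning down the law of $\tilde\Pi$ through the quadratic-variation identity together with strong uniqueness for the defining SDE.
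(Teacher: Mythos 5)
Your proposal is correct in outline, but it follows a genuinely different route from the paper's. The paper's own proof is a short reduction: approximate $v(t,\cdot)$ by Bermudan problems exactly as in the proof of Proposition~\ref{concave}, note that each approximating value function is concave in $\pi$, and invoke the result of \cite{JT} that the expected value of a concave function of a martingale diffusion is non-increasing in the volatility; since by Corollary~\ref{T:SD} the volatility seen from a later starting time is smaller, each approximation is non-decreasing in $t$, and letting $n\to\infty$ finishes the proof. Your argument replaces this concavity-plus-comparison machinery with a pathwise time-change coupling: the process started at $t'>t$ is realised as a slowed-down copy $\Pi^{(1)}_{B_r}$, $B_r\le r$, of the process started at $t$, so every stopping rule for the $t'$-problem is matched by a rule for the $t$-problem with the same stopped distribution and a smaller time cost. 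This costs more work but also buys more: it nowhere uses concavity of $g$ or of $v$, hence proves monotonicity in $t$ for an arbitrary (even non-concave) payoff; it handles the finite-horizon values in the same stroke; and it turns the ``slower learning at later times'' intuition into an exact pathwise domination.

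Two points need repair before your argument is rigorous, and they are not quite the obstacles you flag. First, the ODE you use to define $B$ is ill-posed as written: its right-hand side contains $\Pi^{(1)}_{B_r}$, and the path $b\mapsto\Pi^{(1)}_b$ is only H\"older continuous, so the field is not Lipschitz in the unknown $B_r$; uniqueness may fail and, worse, there is no evident adapted selection among solutions --- the degeneracy near $\{0,1\}$ is not the real difficulty. The standard cure is to define the inverse clock $A$ instead, by $A_0=0$ and $A_s'=\sigma^2\bigl(t+s,\Pi^{(1)}_s\bigr)/\sigma^2\bigl(t'+A_s,\Pi^{(1)}_s\bigr)$: here the rough path enters only through the time variable $s$, while the unknown $A_s$ enters through the first argument of $\sigma$, which is $C^1$ by Proposition~\ref{T:C1} and bounded away from zero on compacts; Picard--Lindel\"of then applies pathwise, $A$ is adapted, continuous and strictly increasing, $B$ is its inverse, each $B_r$ is a genuine stopping time because $\{B_r\le s\}=\{A_s\ge r\}\in\F_s$, and your inequality $B_r\le r$ becomes $A_s\ge s$, proved by the same first-crossing argument. (When $\sigma(\cdot,\pi)\to0$, e.g.\ for a normal prior, $A$ explodes in finite time; this is harmless and only means $B$ is bounded, consistent with $\Pi^{t',\pi}$ having finite total quadratic variation.) Second, your final step silently identifies $\inf_\tau\E\bigl[g(\tilde\Pi_\tau)+c\tau\bigr]$ with $v(t',\pi)$, but the stopping times you can time-change are those of the coupled process $\tilde\Pi$ (measurable with respect to the time-changed filtration $\F_{B_\cdot}$), not of $\Pi^{t',\pi}$ itself; you need the standard but nontrivial fact that the value of an optimal stopping problem over natural-filtration stopping times is a functional of the law of the process alone. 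With these two repairs, your argument goes through.
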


\begin{proof}
This can be proven using approximation by Bermudan options as in the proof of Proposition~\ref{concave} above.
Indeed, for a fixed time $t\geq 0$ one may approximate $v(t,\pi)$ by the optimal value in the case when stopping times are restricted to
take values in the set $\{k 2^{-n}: n \in \mathbb{N}, \, k \in \{0,1,\ldots,n2^{n}\}\}$. Since the expected value of a concave function of a
martingale diffusion is non-increasing in the volatility, see \cite{JT}, the approximation is non-decreasing in $t$ by Corollary~\ref{T:SD}. Letting $n\to\infty$ finishes the proof.
\end{proof}

\begin{remark}
It is straightforward to check that the monotonicity of $v$ in the time-variable also holds in cases when the rate $c$ of the observation cost is increasing in time (instead of a constant as in our set-up). As the non-decreasing value function implies the monotonicity of the optimal stopping boundaries (see Proposition~\ref{T:b1b2}), the same monotonicity of the optimal stopping boundaries would be present also in the case of the observation rate $c$ being increasing in time. Accordingly, we expect the subsequent results to extend to that case as well.
\end{remark}

\begin{proposition}
\label{v-is-cont}
The value function $v$ is continuous on $[0,\infty)\times[0,1]$.
\end{proposition}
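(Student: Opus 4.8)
The plan is to establish continuity in each variable separately, obtaining a modulus in $\pi$ that is uniform in $t$, and then to glue the two pieces together. First I would show that, for every $t$, the map $v(t,\cdot)$ is $1$-Lipschitz on $[0,1]$, with constant independent of $t$. Since $0\le v\le g$ and $g(0)=g(1)=0$, we get $v(t,0)=v(t,1)=0$; and because $g(\pi)\le \pi$ and $g(\pi)\le 1-\pi$, the concavity from Proposition~\ref{concave} forces the right derivative $\pt_2 v(t,0^+)=\sup_{\pi\in(0,1)}v(t,\pi)/\pi\le 1$ and, symmetrically, the left derivative $\pt_2 v(t,1^-)\ge -1$. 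By concavity the one-sided $\pi$-derivatives then remain in $[-1,1]$ throughout $(0,1)$, so $|v(t,\pi)-v(t,\pi')|\le |\pi-\pi'|$ for all $t$ and all $\pi,\pi'\in[0,1]$.

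With this uniform bound in hand, $|v(t,\pi)-v(t_0,\pi_0)|\le |\pi-\pi_0|+|v(t,\pi_0)-v(t_0,\pi_0)|$, so joint continuity follows once I prove that $t\mapsto v(t,\pi)$ is continuous for each fixed $\pi$ (the case $\pi\in\{0,1\}$ being trivial as $v(\cdot,0)\equiv v(\cdot,1)\equiv 0$). Since $v(\cdot,\pi)$ is non-decreasing by Proposition~\ref{v-is-nondecr}, it is enough to bound $v(s,\pi)-v(t,\pi)\ge 0$ for $0\le t<s$ and let $s\downarrow t$ and $t\uparrow s$. I would drive both problems by a single Brownian motion $B$: let $Y^{t,\pi}$ solve $\vd Y_r=\sigma(t+r,Y_r)\,\vd B_r$ with $Y_0=\pi$, so that $v(t,\pi)=\inf_\tau \E[g(Y^{t,\pi}_\tau)+c\tau]$ over $B$-stopping times (enlarging the filtration to that of $B$ leaves the value unchanged, by the Markov property of $(t+r,Y^{t,\pi}_r)$). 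Choosing $\tau$ that is $\eps$-optimal for the $t$-problem, the bounds $g\ge 0$ and $v\le 1/2$ give $\E[\tau]\le (1/2+\eps)/c=:C$; feeding this same $\tau$ into the $s$-problem and using that $g$ is $1$-Lipschitz yields
\[
v(s,\pi)-v(t,\pi)\le \eps+\E\!\left|Y^{s,\pi}_\tau-Y^{t,\pi}_\tau\right|,
\]
so the task reduces to showing that the last expectation tends to $0$ as $s\downarrow t$.

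To estimate $D_r:=Y^{t,\pi}_r-Y^{s,\pi}_r$, a martingale with $D_0=0$ and $\vd D_r=[\sigma(t+r,Y^{t,\pi}_r)-\sigma(s+r,Y^{s,\pi}_r)]\,\vd B_r$, I would split the bracket into a time increment (bounded via $|\pt_1\sigma|$ on compacts, available since $\sigma\in C^1$ by Proposition~\ref{T:C1}) and a space increment (controlled by the local Lipschitz constant of $\sigma(s+\cdot,\cdot)$), and then run a Gronwall argument. To make this rigorous I would first truncate the horizon at a level $N$, the tail $\{\tau>N\}$ contributing at most $\tfrac12\P(\tau>N)\le C/(2N)$ by Markov's inequality, and then localise in space to an interval $[\delta,1-\delta]$ on which $\sigma$ is genuinely Lipschitz; choosing $N$ large, then $\delta$ small, then $s-t$ small would give the claim. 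The hard part will be exactly this spatial localisation: the $\pi$-Lipschitz constant of $\sigma$ degenerates as $\pi\to 0,1$ (already visible in the normal example, where $\pt_2\sigma(t,\pi)=-\Phi^{-1}(\pi)\,\gamma/\sqrt{1+t\gamma^2}$ blows up), so Gronwall is valid only away from the absorbing boundary, and the contribution of excursions of $Y^{t,\pi},Y^{s,\pi}$ into $[0,\delta)\cup(1-\delta,1]$ must be shown to be small by exploiting that both $g$ and $\sigma$ vanish there, rather than crudely bounded by the probability of such an excursion. A secondary point to handle carefully is the justification of the filtration enlargement used in the representation of $v$.
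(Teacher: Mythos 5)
Your first two steps (the $t$-uniform $1$-Lipschitz bound in $\pi$ coming from Proposition~\ref{concave} together with $0\le v\le g$, and the reduction of joint continuity to continuity in $t$ at each fixed $\pi$) are correct and coincide with the paper's proof. The time-continuity step, however, contains a genuine gap, and you have flagged it yourself: the Gronwall argument needs a spatial Lipschitz bound for $\sigma$, and no such bound holds uniformly on $(0,1)$ --- as you observe, $\pt_2\sigma$ blows up at $\pi=0,1$ already for normal priors. Your coupling is driven by an arbitrary $\eps$-optimal stopping time $\tau$ for the $t$-problem, and a priori nothing prevents either $Y^{t,\pi}$ or $Y^{s,\pi}$ from entering $[0,\delta)\cup(1-\delta,1]$ before $\tau\wedge N$ with non-negligible probability; on that event you have no estimate at all, and saying that the excursion contribution ``must be shown to be small by exploiting that $g$ and $\sigma$ vanish there'' is a statement of what remains to be proven, not a proof. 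The gap is fillable, but it needs an actual idea rather than bookkeeping: for instance, replace $\tau$ by $\tau\wedge H_\delta$, where $H_\delta$ is the exit time of $(\delta,1-\delta)$ for $Y^{t,\pi}$ --- since $g\le\delta$ outside $(\delta,1-\delta)$, this costs at most $\delta$ in optimality and confines $Y^{t,\pi}$ to a compact set --- and then stop $Y^{s,\pi}$ at its exit from $(\delta/2,1-\delta/2)$, noting that if that exit occurs before $\tau\wedge H_\delta\wedge N$ the two paths have already separated by $\delta/2$, an event whose probability is controlled by Gronwall plus Chebyshev. As written, the proof is incomplete exactly at its crux.

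For comparison, the paper's argument avoids pathwise estimates altogether. It uses the dynamic programming inequality $v(t_1,\pi)\ge\E[v(t_2,\Pi^{t_1,\pi}_{t_2})]$ (the right-hand side being the value of the problem in which observation is free on $[t_1,t_2]$), the same $1$-Lipschitz bound in $\pi$, and then the key structural fact that the volatility is non-increasing in time (Corollary~\ref{T:SD}), so that by the comparison theorem for expected convex functions of martingale diffusions \cite{JT},
\[
0\le v(t_2,\pi)-v(t_1,\pi)\le\E\bigl[|\Pi^{t_1,\pi}_{t_2}-\pi|\bigr]\le\E\bigl[|\Pi^{0,\pi}_{t_2-t_1}-\pi|\bigr]\longrightarrow 0 \quad\text{as } t_2-t_1\searrow 0,
\]
a modulus that is automatically uniform in $t_1$. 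No Lipschitz constant for $\sigma$, no truncation or localisation, and no filtration issues arise (incidentally, your enlargement point is harmless: since $\sigma>0$ on $[0,\infty)\times(0,1)$, the completed filtrations of $Y^{t,\pi}$ and of the driving Brownian motion coincide).
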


\begin{proof}
By concavity of $v$ in the second variable together with the bounds $0\leq v\leq g$, we have that $v$ is Lipschitz continuous
in $\pi$ for any fixed $t$, with Lipschitz coefficient 1. 
Thus it suffices to check that $v$ is continuous in time. 
To do this, 
let $t_2>t_1 \geq 0$ and note that
\[v(t_1,\pi) \geq \E[v(t_2,\Pi_{t_2}^{t_1,\pi})] \geq v(t_2,\pi)-\E[\vert \Pi_{t_2}^{t_1,\pi}-\pi\vert],\]
where the first inequality holds since $\E[v(t_2,\Pi_{t_2}^{t_1,\pi})]$ represents the value of a sequential testing problem, started at $t_1$, with 
the running cost of observation not started until time $t_2$, the second inequality holds by the concavity of $v$ in the second variable and the bounds $0\leq v\leq g$.
Thus
\[0\leq v(t_2,\pi)-v(t_1,\pi)\leq \E[\vert \Pi_{t_2}^{t_1,\pi}-\pi\vert].\]
Since the expected value of a convex function of a martingale diffusion is non-decreasing in the volatility (again by \cite{JT}) and $\sigma(0,\cdot) \geq \sigma(\cdot, \cdot)$ on \mbox{$[0,\infty)\times (0,1)$}, we deduce that
$ \E[\vert \Pi_{t_2}^{t_1,\pi}-\pi\vert] \leq \E[\vert \Pi_{t_2-t_1}^{0,\pi}-\pi\vert] \tend 0 $
as $t_2- t_1 \searrow 0$.
This finishes the proof.
\end{proof}

\begin{lemma}
\label{v-at-1/2}
We have $v(t,1/2)<g(1/2)$ for all times $t\geq 0$.
\end{lemma}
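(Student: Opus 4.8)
The plan is to show that observing for a short while strictly beats stopping immediately at the maximally uncertain value $\pi=1/2$. Writing $g(\pi)=\pi\wedge(1-\pi)=\tfrac12-|\pi-1/2|$, so that $g(1/2)=1/2$, I would use the deterministic time $\tau=h>0$ as a (generally suboptimal) competitor in \eqref{v}. Since $\Pi^{t,1/2}$ is a martingale started at $\Pi^{t,1/2}_t=1/2$, this gives
\[ v(t,1/2)\le \E\big[g(\Pi^{t,1/2}_{t+h})\big]+ch=\tfrac12-\E\big[|\Pi^{t,1/2}_{t+h}-1/2|\big]+ch . \]
The whole matter thus reduces to proving that $\E[|\Pi^{t,1/2}_{t+h}-1/2|]$ decays strictly more slowly than the linear cost $ch$ as $h\searrow 0$; concretely, I would show it is of order $\sqrt h$, which dominates $ch$ for $h$ small.

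For this key estimate, write $M_s:=\Pi^{t,1/2}_{t+s}-1/2=\int_0^s\sigma(t+u,\Pi^{t,1/2}_{t+u})\,\ud\hat W_{t+u}$. By Proposition~\ref{T:C1} the volatility $\sigma$ is continuous, and it is strictly positive everywhere, so on the compact set $[t,t+1]\times[1/2-\delta,1/2+\delta]$ it is bounded between constants $0<m\le M<\infty$ for a suitably small $\delta\in(0,1/2)$. I would then localise: let $\rho:=\inf\{s\ge0:|M_s|\ge\delta\}$ and consider the stopped martingale $N_s:=M_{s\wedge\rho}$, which satisfies $|N_s|\le\delta$ and $m^2(s\wedge\rho)\le\langle N\rangle_s\le M^2 s$. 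From $\E[N_h^2]=\E[\langle N\rangle_h]\ge m^2\,\E[h\wedge\rho]$, together with $\E[h\wedge\rho]\ge h\,\P(\rho\ge h)\ge h/2$ for $h$ small (since $\rho>0$ a.s., so $\P(\rho<h)\to 0$), one obtains the second-moment lower bound $\E[N_h^2]\ge m^2h/2$; and the Burkholder--Davis--Gundy inequality gives a fourth-moment upper bound $\E[N_h^4]\le CM^4h^2$. Combining these through the interpolation bound $\E|N_h|\ge(\E N_h^2)^{3/2}(\E N_h^4)^{-1/2}$ yields $\E|N_h|\ge c_0\sqrt h$ for a constant $c_0>0$ and all small $h$.

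It remains to pass from $N_h$ back to $M_h=\Pi^{t,1/2}_{t+h}-1/2$. Since $N_h=M_h$ on $\{\rho\ge h\}$ and $|N_h|=\delta$ on $\{\rho<h\}$, I have $\E|M_h|\ge\E|N_h|-\delta\,\P(\rho<h)$, while Doob's maximal inequality applied to the fourth moment bounds $\P(\rho<h)=\P(\sup_{s\le h}|N_s|\ge\delta)$ by $O(h^2)$. Hence $\E|M_h|\ge c_0\sqrt h-O(h^2)\ge\tfrac12 c_0\sqrt h$ for $h$ small. Plugging this into the first display and choosing $h$ with $\sqrt h<c_0/(2c)$ gives $v(t,1/2)\le\tfrac12-\tfrac12 c_0\sqrt h+ch<\tfrac12=g(1/2)$, as required. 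The constants $\delta,m,M,c_0$ may be chosen separately for each fixed $t\ge0$, which suffices since the claim is pointwise in $t$.

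The conceptual heart of the argument is simple: the kink of $g$ at $1/2$ makes the loss from waiting scale like the typical displacement $\E|\Pi_{t+h}-1/2|\asymp\sqrt h$ of the diffusion, which dominates the linear observation cost $ch$. The main technical obstacle is to turn this heuristic into the rigorous lower bound $\E|\Pi_{t+h}-1/2|\ge c_0\sqrt h$ without an explicit law for $\Pi$; the localisation plus second/fourth-moment comparison above is what I would use, although an alternative is to apply the It\^o--Tanaka formula to $|\Pi_{\cdot}-1/2|$ and bound the expected local time at $1/2$ from below.
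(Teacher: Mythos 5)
Your proof is correct, but it follows a genuinely different route from the paper's. The paper also exploits the fact that diffusive displacement scales like the square root of time while the observation cost is linear, but it implements this with a different competitor: the exit time $\tau_\ep$ of the shrinking space-time box $[t,t+\ep]\times[1/2-(c+1)\ep,\,1/2+(c+1)\ep]$. With that choice, exiting through the lateral sides yields payoff at most $1/2-(c+1)\ep$ plus cost at most $c\ep$, so the paper only needs the soft estimate $\P(\tau_\ep=\ep)\to 0$ as $\ep\to 0$ (obtained by comparing the time-changed martingale with a Brownian motion of volatility $\sigma_\ep$ bounded away from zero, via \cite[Lemma 10]{JT}), giving a net gain of order $\ep$. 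You instead use the deterministic time $h$ as competitor, which shifts all the work into the quantitative lower bound $\E\vert\Pi^{t,1/2}_{t+h}-1/2\vert\geq c_0\sqrt h$; your derivation of it (localisation at level $\delta$, the two-sided quadratic-variation bounds, BDG for the fourth moment, and the interpolation $\E\vert N_h\vert\geq(\E N_h^2)^{3/2}(\E N_h^4)^{-1/2}$, which is a correct consequence of H\"older's inequality) is sound, and the error terms $\delta\,\P(\rho<h)=O(h^2)$ are handled properly. Both arguments rest on the same two ingredients from the paper — continuity and strict positivity of $\sigma$ near $(t,1/2)$ (Proposition~\ref{T:C1}) and the kink of $g$ at $1/2$ — and both are pointwise in $t$, so neither needs uniformity of the constants. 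What the paper's box-exit argument buys is economy: it avoids moment inequalities entirely and needs only a vanishing-probability estimate. What your argument buys is an explicit quantitative gap: optimising over $h$ gives $g(1/2)-v(t,1/2)\geq\sup_{h>0}\bigl(\tfrac12 c_0\sqrt h-ch\bigr)=c_0^2/(16c)$, with $c_0$ controlled by the local bounds $m,M$ on the volatility, which the paper's proof does not make explicit.
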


\begin{proof}
Let $t  \geq 0$, and define $A_\ep := [t, t+\ep]\times[1/2-(c+1)\ep, 1/2 +(c+1)\ep]$ for $\ep$ small enough so that $A_\ep\subseteq [t,\infty)\times(0,1)$.
Let
\[\tau_\ep:=\inf\{s\geq 0:(t+s,\Pi^{t, 1/2}_{t+s})\notin A_\ep\}\]
be the first exit time from $A_\ep$. By Proposition \ref{T:C1} and Corollary~\ref{T:SD}, $\sigma(\cdot, \cdot)$ is 
continuous and strictly positive on $[0, \infty)\times(0,1)$. Thus $\sigma_\ep := \inf_{(s,\pi) \in A_\ep} \sigma(s, \pi)$ is strictly 
positive and non-increasing as a function of $\ep$, so $\sigma_\ep$ is bounded away from 0 as $\ep\to 0$. Now, 
\begin{IEEEeqnarray*}{rCl}
g(1/2)-v(t,1/2) &\geq&
1/2- \E\lt[ g(\Pi^{t, 1/2}_{t+\tau_\ep} ) + c\tau_\ep \rt] \\
&\geq& 1/2-
(1/2  - (c+1) \ep + c \ep ) \P(\tau_\ep< \ep) - (1/2 + c\ep) \P(\tau_\ep= \ep) \\
&=& \ep -(c+1)\ep \P(\tau_\ep = \ep) \IEEEyesnumber. \label{E:ppp}
\end{IEEEeqnarray*}
Here
\begin{IEEEeqnarray*}{rCl}
\P(\tau_{\ep}=\ep)
&=&\P \lt( \sup_{0\leq s \leq\ep} \lt| \int_0^s \sigma(t+u, \Pi^{t, 1/2}_{t+u} ) \ud \hat{W}_{t+u} \rt| \leq (c+1) \ep \rt) \\
&\leq& \P\lt( \sup_{0\leq s \leq \ep} \lt| \sigma_\ep \hat{W}_s \rt| \leq  (c+1)\ep \rt) \\
&\leq& \P\lt( \sup_{0\leq s \leq \ep} \hat{W}_s \leq  (c+1)\ep / \sigma_\ep \rt) \to 0\IEEEyesnumber \label{E:tz}
\end{IEEEeqnarray*}
as $\ep \tend 0$, where the first inequality follows from \cite[Lemma 10]{JT}.
Consequently, (\ref{E:ppp}) and (\ref{E:tz}) yield that $ g(1/2)-v(t,1/2)> 0$, which finishes the proof of the claim.
\end{proof}

\subsection{The structure of an optimal strategy}
Recalling that $0\leq v(t,\pi)\leq g(\pi)$, we denote by 
\[\mathcal C :=\{(t,\pi)\in[0,\infty)\times(0,1):v(t,\pi)<g(\pi)\}\]
the continuation region, and by 
\[\mathcal D:=\{(t,\pi)\in[0,\infty)\times(0,1):v(t,\pi)=g(\pi)\}\]
the stopping region. Since $v$ is continuous, $\mathcal C$ is open and $\mathcal D$ is closed.
Resorting to intuition from optimal stopping theory, we expect that the stopping time
\begin{equation}
\label{tau}
\tau^*:=\inf\{s\geq 0:(t+s,\Pi^{t,\pi}_{t+s})\in \mathcal D\}
\end{equation}
is an optimal stopping time in \eqref{v}. 
(Note that standard optimal stopping theory does not apply since the pay-off process is not uniformly integrable.)
The optimality of $\tau^*$ is verified below, see Theorem~\ref{optimal}.

\begin{proposition} \label{T:b1b2}
There exist two functions $b_1:[0,\infty)\to[0,1/2)$ and $b_2:[0,\infty)\to(1/2,1]$ such that 
\[\mathcal C=\{(t,\pi):b_1(t)<\pi<b_2(t)\}.\]
The function $b_1$ is non-decreasing and right-continuous with left limits. Similarly, $b_2$ is non-increasing
and right-continuous with left limits.
\end{proposition}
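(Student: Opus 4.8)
The plan is to exploit the concavity of $v(t,\cdot)$ (Proposition~\ref{concave}) together with the fact that the gain function $g(\pi)=\pi\wedge(1-\pi)$ is affine on each of the two half-intervals $[0,1/2]$ and $[1/2,1]$. Fix $t\geq 0$ and set $h(t,\pi):=g(\pi)-v(t,\pi)\geq 0$. On $[0,1/2]$ the function $g$ is affine and $v(t,\cdot)$ is concave, so $h(t,\cdot)$ is \emph{convex} there; likewise $h(t,\cdot)$ is convex on $[1/2,1]$. Using the bounds $0\le v\le g$ together with continuity (Proposition~\ref{v-is-cont}) one has $h(t,0)=h(t,1)=0$, while $h(t,1/2)>0$ by Lemma~\ref{v-at-1/2}. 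Since the set on which a nonnegative continuous convex function vanishes is its (convex, closed) arg-min, the zero set of $h(t,\cdot)$ on $[0,1/2]$ is of the form $[0,b_1(t)]$ with $b_1(t)\in[0,1/2)$, and the zero set on $[1/2,1]$ is of the form $[b_2(t),1]$ with $b_2(t)\in(1/2,1]$. As $(t,\pi)\in\mathcal C$ is equivalent to $h(t,\pi)>0$, this yields precisely $\mathcal C=\{(t,\pi):b_1(t)<\pi<b_2(t)\}$.

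For the monotonicity I would use that $v(t,\pi)$ is non-decreasing in $t$ (Proposition~\ref{v-is-nondecr}), hence $h(t,\pi)$ is non-increasing in $t$. If $t_1<t_2$ and $\pi\le b_1(t_1)$, then $h(t_1,\pi)=0$ forces $0\le h(t_2,\pi)\le h(t_1,\pi)=0$, so $\pi\le b_1(t_2)$; thus the zero set on $(0,1/2]$ grows with $t$ and $b_1$ is non-decreasing. The same argument applied on $[1/2,1)$ shows the zero set $[b_2(t),1)$ grows with $t$, i.e.\ $b_2$ is non-increasing. In particular both boundaries are monotone and therefore possess one-sided limits at every point, so the existence of left limits is automatic.

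It remains to prove right-continuity, and here I would invoke the closedness of $\mathcal D$ (which follows from Proposition~\ref{v-is-cont}). Take $t_n\downarrow t$; by monotonicity $b_1(t_n)\downarrow b_1(t+)\ge b_1(t)$. Each $(t_n,b_1(t_n))$ lies in $\mathcal D$ and $(t_n,b_1(t_n))\to(t,b_1(t+))$, so closedness gives $(t,b_1(t+))\in\mathcal D$, i.e.\ $h(t,b_1(t+))=0$. By Lemma~\ref{v-at-1/2} this excludes $b_1(t+)=1/2$, so $b_1(t+)<1/2$; the maximality in the definition of $b_1(t)$ then gives $b_1(t+)\le b_1(t)$, whence $b_1(t+)=b_1(t)$. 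The symmetric argument yields right-continuity of $b_2$, completing the proof.

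The main obstacle I anticipate lies in the first paragraph: one must recognize that splitting the state interval at $\pi=1/2$, where $g$ switches slope, turns $g-v$ into a convex function on each half, and that this is exactly what forces the continuation region to be a single interval straddling $1/2$. This is where Lemma~\ref{v-at-1/2} is indispensable, as it guarantees that $1/2$ always lies strictly inside $\mathcal C$ so the two boundaries never meet. Once this structure is in place, the monotonicity and the c\`adl\`ag regularity are comparatively routine consequences of Proposition~\ref{v-is-nondecr} and the closedness of $\mathcal D$.
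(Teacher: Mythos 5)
Your proposal is correct and follows essentially the same route as the paper: concavity of $v$ together with Lemma~\ref{v-at-1/2} yields the interval structure of $\mathcal C$, Proposition~\ref{v-is-nondecr} gives the monotonicity, and the continuity of $v$ (equivalently, closedness of $\mathcal D$) gives the semicontinuity that, combined with monotonicity, yields right-continuity with left limits. The paper states these three steps as immediate; you have merely written out the details (convexity of $g-v$ on each half-interval, the sequential closedness argument), which is a faithful expansion rather than a different argument.
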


\begin{proof}
The existence of $b_1$ and $b_2$ follows from the concavity of $v$ and Lemma~\ref{v-at-1/2}. 
The monotonicity properties are immediate consequences of Proposition~\ref{v-is-nondecr}.
Moreover, by the continuity of $v$, the function $b_1$ is upper semi-continuous and $b_2$ is lower semi-continuous. 
Hence, they are right-continuous with left limits. 
\end{proof}

Let us also consider the same optimal stopping problem with a finite horizon $T>0$. It is written as 
\begin{IEEEeqnarray}{rCl}
\label{vT}
v^T(t,\pi)=\inf_{\tau\in\mathcal T_{T-t}}\E\left[g(\Pi^{t,\pi}_{t+\tau}) + c\tau\right],
\end{IEEEeqnarray}
where $\mathcal T_{T-t}$ denotes the set of stopping times less or equal to $T-t$ with respect to the completed filtration of $\{ \Pi^{t,\pi}_{t+s}\}_{s \geq 0}$. Note that all results for the perpetual problem \eqref{v}
described above in this section also hold for the finite horizon problem \eqref{vT}, with the obvious modifications regarding the time horizon,
by the same proofs.
Moreover, the pay-off process in \eqref{vT} is continuous and bounded, so 
standard optimal stopping theory 
(see, for example, \cite[Corollary 2.9 on p.~46]{PS06}) 
yields that \[\tau^T:=\inf\{s\geq 0:\Pi^{t,\pi}_{t+s}\notin (b^T_1(t+s),b^T_2(t+s))\}\]
is an optimal stopping time in \eqref{vT}, where $b_1^T$ and $b_2^T$ are the corresponding boundaries
enclosing the finite-horizon continuation region 
\[\mathcal C^T:=\{(t,\pi)\in[0,T)\times(0,1):v^T(t,\pi)<g(\pi)\}.\] 

The infinite-horizon problem can be approximated by finite-horizon problems in the following sense.

\begin{theorem} 
\label{optimal}
The functions $v^T \searrow v$, $b_1^T \searrow b_1$, and   $b_2^T \nearrow b_2$ pointwise as $T \nearrow \infty$. The stopping times  $\tau^T \nearrow \tau^*$ a.s.~as $T\nearrow \infty$, where $\tau^*$ is defined in \eqref{tau}. Moreover,  $\tau^*$ is optimal in \eqref{v}.
\end{theorem}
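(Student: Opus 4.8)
The plan is to establish the four assertions in turn, exploiting at each stage the monotonicity in $T$ of the finite-horizon quantities together with the a priori bounds $0\le v^T\le g\le 1/2$. First I would treat $v^T\searrow v$. Since $\mathcal T_{T-t}\subseteq\mathcal T_{T'-t}\subseteq\mathcal T$ for $T\le T'$, the map $T\mapsto v^T(t,\pi)$ is non-increasing and bounded below by $v(t,\pi)$, so a pointwise limit $v^\infty\ge v$ exists. To identify $v^\infty=v$, fix an $\varepsilon$-optimal $\tau\in\mathcal T$ for $v(t,\pi)$; since $c>0$ and $g\ge0$, optimality forces $c\,\E[\tau]\le v(t,\pi)+\varepsilon\le 1/2+\varepsilon$, so $\E[\tau]<\infty$ and hence $\P(\tau>T-t)\to0$. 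Truncating to $\tau\wedge(T-t)\in\mathcal T_{T-t}$ and using $c(T-t)-c\tau<0$ on $\{\tau>T-t\}$ together with $0\le g\le 1/2$ gives
\[
v^T(t,\pi)\le \E[g(\Pi^{t,\pi}_{t+\tau})+c\tau]+\tfrac12\P(\tau>T-t)\le v(t,\pi)+\varepsilon+\tfrac12\P(\tau>T-t),
\]
so letting $T\to\infty$ and then $\varepsilon\to0$ yields $v^\infty\le v$, hence equality.

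For the boundaries, the bound $v^T\ge v$ gives $\mathcal C^T\subseteq\mathcal C$, while $v^T\searrow v$ gives $\mathcal C^T\uparrow$ with $\bigcup_T\mathcal C^T=\mathcal C$ (any $(t,\pi)\in\mathcal C$ satisfies $v^T(t,\pi)<g(\pi)$ for $T$ large, since $v^T\downarrow v<g$ there). Intersecting with a fixed time-slice $\{t\}\times(0,1/2)$ turns this into $(b_1^T(t),1/2)\uparrow(b_1(t),1/2)$, forcing $b_1^T(t)\searrow b_1(t)$; the statement $b_2^T(t)\nearrow b_2(t)$ is symmetric.

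Turning to the stopping times, the inclusion $\mathcal D\subseteq\mathcal D^T$ shows $\tau^T\le\tau^*$, and the nesting $\mathcal D^{T'}\subseteq\mathcal D^T$ for $T\le T'$ makes $T\mapsto\tau^T$ non-decreasing, so $\tau^T\uparrow\tau^\infty\le\tau^*$ for some limit $\tau^\infty$. The crux is the reverse inequality $\tau^*\le\tau^\infty$, which I would argue only on $\{\tau^\infty<\infty\}$ (on its complement $\tau^*=\infty=\tau^\infty$ trivially). Since $\mathcal D^T$ is closed and $\Pi:=\Pi^{t,\pi}$ has continuous paths, $(t+\tau^T,\Pi_{t+\tau^T})\in\mathcal D^T$, so $\Pi_{t+\tau^T}$ lies at or beyond $b_1^T$ or $b_2^T$ at time $t+\tau^T$; by the pigeonhole principle one alternative holds along a subsequence $T_k$, say $\Pi_{t+\tau^{T_k}}\le b_1^{T_k}(t+\tau^{T_k})$. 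Here I would use that each $b_1^{T_k}$ is itself non-decreasing in time (Proposition~\ref{T:b1b2}): since $\tau^{T_k}\le\tau^\infty$,
\[
\Pi_{t+\tau^{T_k}}\le b_1^{T_k}(t+\tau^{T_k})\le b_1^{T_k}(t+\tau^\infty),
\]
and letting $k\to\infty$ the left side converges to $\Pi_{t+\tau^\infty}$ by path-continuity while the right side converges to $b_1(t+\tau^\infty)$ by the boundary convergence just established. Hence $\Pi_{t+\tau^\infty}\le b_1(t+\tau^\infty)$, i.e. $(t+\tau^\infty,\Pi_{t+\tau^\infty})\in\mathcal D$, so $\tau^*\le\tau^\infty$; the upper-boundary subsequence is identical, using that $b_2^{T_k}$ is non-increasing in time.

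Finally, for optimality of $\tau^*$, the bound $v(t,\pi)\le\E[g(\Pi_{t+\tau^*})+c\tau^*]$ is immediate from the definition of $v$. For the reverse I would pass to the limit in the finite-horizon identity $v^T(t,\pi)=\E[g(\Pi_{t+\tau^T})+c\tau^T]$ via Fatou's lemma, legitimate since the integrands $Y_T:=g(\Pi_{t+\tau^T})+c\tau^T$ are non-negative: on $\{\tau^*<\infty\}$ path-continuity and $\tau^T\uparrow\tau^*$ give $Y_T\to g(\Pi_{t+\tau^*})+c\tau^*$, while on $\{\tau^*=\infty\}$ we have $c\tau^T\to\infty$ and so $Y_T\to\infty$. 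Thus
\[
v(t,\pi)=\lim_T\E[Y_T]\ge\E[\liminf_T Y_T]\ge \E\!\left[(g(\Pi_{t+\tau^*})+c\tau^*)\Ind_{\{\tau^*<\infty\}}\right]+\infty\cdot\P(\tau^*=\infty),
\]
and since $v(t,\pi)\le g(\pi)<\infty$ this simultaneously forces $\P(\tau^*=\infty)=0$ (so $\tau^*<\infty$ a.s.) and yields $v(t,\pi)\ge\E[g(\Pi_{t+\tau^*})+c\tau^*]$, hence optimality. I expect the main obstacle to be the third step, the convergence $\tau^T\uparrow\tau^*$, where the moving time-argument of the boundaries must be controlled and the decisive ingredient is the time-monotonicity of the finite-horizon boundaries; the optimality step, flagged in the introduction as delicate because of the unbounded additive term $c\tau$ and the resulting lack of uniform integrability, then reduces to the Fatou-plus-finiteness observation above, which is precisely where that term genuinely bites by forcing $\tau^*$ to be a.s. finite.
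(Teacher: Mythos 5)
Your proposal is correct and follows essentially the same route as the paper's proof: truncation plus monotone/bounded convergence for $v^T\searrow v$, nested continuation regions for $b_1^T\searrow b_1$ and $b_2^T\nearrow b_2$, time-monotonicity of the finite-horizon boundaries to upgrade $\tau^T\nearrow\tau^\infty\le\tau^*$ to $\tau^\infty=\tau^*$, and a passage to the limit in the identity $v^T=\E[g(\Pi^{t,\pi}_{t+\tau^T})+c\tau^T]$. The only cosmetic difference is at the last step, where you use Fatou's lemma together with the finiteness of $v$ (which also forces $\tau^*<\infty$ a.s.), while the paper identifies the limit exactly by bounded and monotone convergence; both are valid.
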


\begin{proof}
Since $v^T\geq v$, we have that $b_1\leq b_1^T<b_2^T\leq b_2$ and $\tau^T\leq \tau^*$.
By bounded and monotone convergence, $v^T(t,\pi)\searrow v(t,\pi)$ pointwise as $T\to\infty$, so 
$b_1^T \searrow b_1$ and   $b_2^T \nearrow b_2$ pointwise as $T\to\infty$. Consequently, by the monotonicity 
of $b_i$ and $b_i^T$, it follows that $\tau^T\nearrow \tau^*$ a.s. 
Thus
\[v^T(t,\pi)=\E\left[ g(\Pi^{t,\pi}_{t+\tau^T})+c\tau^T\right]\to \E\left[ g(\Pi^{t,\pi}_{t+\tau^*})+c\tau^*\right]\]
by bounded and monotone convergence. By uniqueness of limits, 
\[v(t,\pi)= \E\left[ g(\Pi^{t,\pi}_{t+\tau^*})+c\tau^*\right],\]
so $\tau^*$ is optimal.
\end{proof}

\subsection{Optimal stopping boundaries and the free-boundary problem}

\begin{proposition}
The boundaries $b_1$ and $b_2$ satisfy $0<b_1(t)<1/2<b_2(t)<1$ for all times $t\geq 0$.
\end{proposition}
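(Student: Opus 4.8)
\textit{Proof proposal.} Since Proposition~\ref{T:b1b2} already gives $b_1(t)\in[0,1/2)$ and $b_2(t)\in(1/2,1]$, only the strict endpoint inequalities $b_1(t)>0$ and $b_2(t)<1$ remain. By the symmetry $\pi\mapsto 1-\pi$ (corresponding to replacing $\Dr$ by $-\Dr$, i.e. reflecting $\mu$), which interchanges the roles of $b_1$ and $b_2$ and leaves $g$ invariant, it suffices to prove $b_2(t)<1$. Moreover, since $b_2$ is non-increasing (Proposition~\ref{T:b1b2}) we have $b_2(t)\le b_2(0)$, so it is enough to treat a single fixed starting time $t$. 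The claim $b_2(t)<1$ is equivalent to the assertion that a whole left-neighbourhood of $\pi=1$ belongs to the stopping region $\D$, i.e. that there is $\pi^\ast\in(1/2,1)$ with $v(t,\pi)=g(\pi)$ for every $\pi\in[\pi^\ast,1)$.

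The plan is to recast optimality of immediate stopping at such a $\pi$ as a single inequality. Fix $\pi\in(1/2,1)$. Using the martingale property $\E[\Pi^{t,\pi}_{t+\tau}]=\pi$ together with the identity $g(y)=(1-y)-(1-2y)^+$, valid for all $y\in[0,1]$, one finds for every stopping time $\tau$ with $\E[\tau]<\infty$ (the only relevant case) that
\[
\E[g(\Pi^{t,\pi}_{t+\tau})+c\tau]-g(\pi)=c\,\E[\tau]-\E[(1-2\Pi^{t,\pi}_{t+\tau})^+].
\]
Hence $v(t,\pi)=g(\pi)$ precisely when $c\,\E[\tau]\ge\E[(1-2\Pi^{t,\pi}_{t+\tau})^+]$ for all $\tau$. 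Applying the Tanaka--Meyer formula to the convex map $y\mapsto(1-2y)^+$ and the martingale $\Pi^{t,\pi}$ converts the right-hand side into a local time, $\E[(1-2\Pi^{t,\pi}_{t+\tau})^+]=\E[L^{1/2}_\tau]$, where $L^{1/2}$ is the local time of $\Pi^{t,\pi}$ at the level $1/2$. Everything thus reduces to establishing
\[
\E[L^{1/2}_\tau]\le c\,\E[\tau]\qquad\text{for all }\tau,
\]
once $\pi$ is sufficiently close to $1$.

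To prove this I would localise at the first passage time $\rho_\ell:=\inf\{s\ge0:\Pi^{t,\pi}_{t+s}\le\ell\}$ to an intermediate level $\ell\in(1/2,1)$. On $[0,\rho_\ell]$ no local time at $1/2$ accrues, and the volatility is bounded by $\sigma_\ell:=\sup_{[0,\infty)\times[\ell,1)}\sigma$; by Corollary~\ref{T:SD} (monotonicity of $\sigma$ in time) together with the integrability assumption \eqref{integrability}, which forces $\sigma(0,\cdot)$ to vanish at the endpoints, one has $\sigma_\ell\to0$ as $\ell\to1$. Two competing contributions must then be balanced. First, the rare event that $\Pi^{t,\pi}$ nonetheless reaches $1/2$ quickly: a Doob maximal-inequality bound on the stopped quadratic variation gives $\P(\rho_{1/2}\le\delta)\le \sigma_\ell^2\,\delta/(\pi-1/2)^2$, small for $\pi$ near $1$. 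Second, the cost already incurred on reaching $1/2$: since the diffusion crosses the strip $[\ell,\pi]$ only slowly when $\sigma_\ell$ is small, the accrued cost $c\,\E[\tau;\tau\ge\rho_{1/2}]\ge c\,\E[\rho_{1/2};\tau\ge\rho_{1/2}]$ is large and must dominate the expected local time collected after $\rho_{1/2}$. I would then fix $\ell$ (hence $\pi^\ast$) so close to $1$ that the cost of the first passage to $1/2$ outweighs any local time subsequently accumulated, yielding $\E[L^{1/2}_\tau]\le c\,\E[\tau]$ and therefore $v(t,\pi)=g(\pi)$.

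The main obstacle is precisely this global balance between local time and elapsed cost. The inequality $\E[L^{1/2}_\tau]\le c\,\E[\tau]$ cannot hold for $\pi$ near $1/2$ --- indeed that is exactly why a neighbourhood of $1/2$ lies in the continuation region (Lemma~\ref{v-at-1/2}) --- so the argument must genuinely exploit that the first visit to $1/2$ from a point near $1$ is both improbable and, when it occurs, expensive. Quantifying the post-$\rho_{1/2}$ local time in terms of the prepaid clock cost is the delicate point; here the vanishing of the volatility at the boundary, supplied by Corollary~\ref{T:SD} and the asymptotics of $\mu_{t,x}$ as $x\to\infty$, is what makes the travel time to $1/2$ large enough to close the estimate. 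The statement $b_1(t)>0$ then follows verbatim by the symmetry noted above.
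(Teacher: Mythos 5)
Your reduction is correct as far as it goes: for $\pi>1/2$ the identity $g(y)=(1-y)-(1-2y)^+$, optional sampling for the bounded martingale $\Pi$, and Tanaka's formula do show that $v(t,\pi)=g(\pi)$ if and only if $\E[L^{1/2}_\tau]\le c\,\E[\tau]$ for every stopping time $\tau$; the monotonicity reductions at the start are also fine. The problem is that the heart of the proof --- establishing this inequality for $\pi$ near $1$ --- is never carried out, and the ingredients you invoke do not suffice. First, the claim $\sigma_\ell:=\sup_{[0,\infty)\times[\ell,1)}\sigma\to0$ as $\ell\to1$ is not a consequence of Corollary~\ref{T:SD} together with assumption \eqref{integrability}: Corollary~\ref{T:SD} only reduces it (via time-monotonicity) to the assertion $\sigma(0,\pi)\to0$ as $\pi\to1$, i.e.\ $\pt_2\pi(t,x)\to0$ as $x\to\infty$, which is a nontrivial Laplace-type estimate on the filter that the paper never states or proves for general $\mu$ satisfying \eqref{integrability}; indeed even the finiteness of $\sup_{[0,\infty)\times[1/2,1)}\sigma$, which your quadratic-variation bounds need, rests on this unproven endpoint behaviour. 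Second, and more seriously, even granting $\sigma_\ell\to 0$, the balancing step does not close. Before reaching $1/2$ the process must also traverse $[1/2,\ell]$, where the volatility is \emph{not} small, so your bound $\P(\rho_{1/2}\le\delta)\le\sigma_\ell^2\delta/(\pi-1/2)^2$ is incorrect as written (it holds with $\sup_{[1/2,1)}\sigma$ in place of $\sigma_\ell$). Implementing the scheme with the correct constants --- stopped quadratic variation for $\P(\rho_{1/2}\le T)$, post-hit net gain bounded by $g(1/2)-v(\cdot,1/2)\le 1/2$, and $\P(\tau>T)\le\E[\tau]/T$, then optimizing $T$ --- yields only an estimate of the form $g(\pi)-v(t,\pi)\le K(1-\pi)$ with a constant $K$ that does not vanish. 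This is exactly the critical scaling: since the payoff is itself linear in $1-\pi$ near $1$, such a bound shows $g-v=O(1-\pi)$ but cannot show $g-v=0$, which is what $b_2<1$ means. Closing the gap would require a genuinely finer argument (for instance a bootstrap exploiting a quantitative rate like $\sigma_\ell=o(\sqrt{1-\ell}\,)$), none of which appears in the sketch; the step you yourself flag as ``the delicate point'' is precisely where the proof is missing.

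For contrast, the paper's proof avoids volatility estimates altogether and works at the \emph{other} end with a contradiction that exploits monotonicity of the boundary as a function of time. If $b_1(t_0)=0$ for some $t_0>0$, then $b_1\equiv0$ on $[0,t_0]$, so starting from a small $\pi$ the process can only be stopped before $t_0$ by climbing to $b_2\ge 1/2$, an event of probability at most $2\pi$ by Doob's maximal inequality; hence $\E[\tau^*]\ge t_0(1-2\pi)$, so $v(0,\pi)\ge c\,t_0(1-2\pi)$, contradicting $v(0,\pi)\le g(\pi)=\pi\to0$ as $\pi\searrow0$ (and $b_2<1$ is proved analogously). The crucial trick is that $b_1$ vanishing at one time forces it to vanish on a whole interval, which converts the statement into one a cheap martingale inequality can refute; your attempt to certify membership of a fixed point $(t,\pi)$, $\pi$ close to $1$, in the stopping region is intrinsically harder and is exactly why your argument demands sharp volatility decay that is not available in the paper.
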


\begin{proof}
The two middle inequalities are granted by Lemma \ref{v-at-1/2}.
To see that $b_1>0$ on $[0, \infty)$, without loss of generality, it is sufficient to show that $b_1>0$ on $(0, \infty)$; 
this is due to the possibility provided by $(\ref{E:mualt})$ to start the process $\Pi$ slightly earlier. 
Let us assume, to reach a contradiction, that $b_1(t)=0$ for some $t>0$.
Then, by monotonicity, $b_1\equiv 0$ on $[0,t]$. By the martingale inequality,
\begin{IEEEeqnarray*}{rCl}
\P(\tau^*\leq t) \leq \P\left(\sup_{0\leq s\leq t}\Pi_s^{0,\pi}\geq 1/2\right) \leq 2\pi.
\end{IEEEeqnarray*}
Consequently, 
\[\E [\tau^*]\geq t(1-2\pi),\]
and so $0 = \lim_{\pi \searrow 0} g(\pi) \geq \lim_{\pi \searrow 0}v(0, \pi) \geq c\E[\tau^*] \geq ct > 0$, which is a clear contradiction.
Therefore $b_1>0$ at all times. The proof that $b_2<1$ is analogous.
\end{proof}

\begin{proposition} \label{T:SmoothFit}
The triplet $(v,b_1,b_2)$ satisfies the free boundary problem
\begin{IEEEeqnarray}{rCl}
\label{fbp}
\left\{\begin{array}{ll}
\partial_1 v(t,\pi)+\frac{\sigma(t,\pi)^2}{2}\partial_2^2 v(t,\pi)+c=0 & b_1(t)<\pi<b_2(t)\\
v(t,\pi)=\pi & \pi\leq b_1(t)\\
v(t,\pi)=1-\pi & \pi\geq b_2(t).\end{array}\right.
\end{IEEEeqnarray}
Moreover, the smooth-fit condition holds in the sense that the 
function $\pi\mapsto v(t,\pi)$ is $C^1$ for all $t\geq 0$.
\end{proposition}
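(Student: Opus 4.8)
The plan is to dispatch the three defining relations of the free-boundary problem \eqref{fbp} first and then to attack the smooth-fit assertion, which is the only genuinely delicate part. The two boundary identities are immediate: on the stopping region $\mathcal D$ we have $v=g$ by definition, and $g(\pi)=\pi$ for $\pi\le 1/2$ while $g(\pi)=1-\pi$ for $\pi\ge 1/2$. Since Proposition~\ref{T:b1b2} gives $b_1(t)<1/2<b_2(t)$, the set $\{\pi\le b_1(t)\}$ sits inside $\{\pi\le 1/2\}$ and $\{\pi\ge b_2(t)\}$ inside $\{\pi\ge 1/2\}$, so $v(t,\pi)=\pi$ on the former and $v(t,\pi)=1-\pi$ on the latter.

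For the PDE in the continuation region I would argue by interior parabolic regularity. Fix $(t_0,\pi_0)\in\mathcal C$ and a small open cylinder $Q$ with $\overline Q\subset\mathcal C$. By Proposition~\ref{T:C1} the coefficient $\sigma^2$ is $C^1$, hence locally H\"older, and by Corollary~\ref{T:SD} it is strictly positive, so $\partial_1+\tfrac{\sigma^2}{2}\partial_2^2$ is uniformly parabolic on $\overline Q$; classical theory then yields a unique $u\in C^{1,2}(Q)\cap C(\overline Q)$ solving $\partial_1 u+\tfrac{\sigma^2}{2}\partial_2^2 u+c=0$ in $Q$ with $u=v$ on the parabolic boundary. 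Applying It\^o's formula to $u(t_0+s,\Pi_{t_0+s})$ up to the exit time $\rho$ from $Q$ gives $u(t_0,\pi_0)=\E[v(t_0+\rho,\Pi_{t_0+\rho})+c\rho]$, while the optimality of $\tau^*$ (Theorem~\ref{optimal}) together with the strong Markov property gives the same expression for $v(t_0,\pi_0)$, since no stopping occurs before $\Pi$ leaves $\mathcal C$. Hence $v\equiv u$ on $Q$, so $v\in C^{1,2}(\mathcal C)$ and solves the equation.

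For smooth fit, fix $t_0$ and set $\pi_0=b_1(t_0)\in(0,1/2)$. By concavity of $v(t_0,\cdot)$ (Proposition~\ref{concave}) the one-sided $\pi$-derivatives exist; since $v(t_0,\cdot)$ coincides with the identity just below $\pi_0$, the left derivative equals $1$, and concavity forces $\partial_2^+v(t_0,\pi_0)\le 1$. The remaining inequality $\partial_2^+v(t_0,\pi_0)\ge 1$ is equivalent to showing that the gap $G(\epsilon):=g(\pi_0+\epsilon)-v(t_0,\pi_0+\epsilon)$ is $o(\epsilon)$. To produce this, I would start $\Pi$ at $(t_0,\pi_0)$ and take $\tau$ to be the exit time from the asymmetric interval $(\pi_0-\delta,\pi_0+\epsilon)$; the martingale property of $\Pi$ pins the exit law, with up-probability $q_+=\delta/(\delta+\epsilon)$ and $q_-\delta=q_+\epsilon$. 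The dynamic-programming inequality $v(t_0,\pi_0)\le\E[v(t_0+\tau,\Pi_{t_0+\tau})+c\tau]$, together with the fact that on a down-exit $v(t_0+\tau,\pi_0-\delta)=\pi_0-\delta$ is \emph{independent of time} (stopping region), rearranges into an estimate of the shape $q_+G(\epsilon)\le \E[\,\text{time-increment of }v\text{ on the up-exit}\,]+c\,\E[\tau]$, with $\E[\tau]\asymp\delta\epsilon$.

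The hard part is the time-increment term. A priori we only know from Proposition~\ref{v-is-cont} that $v$ is $\tfrac12$-H\"older in time, giving a bound of order $\sqrt{\E\tau}$; after dividing by $q_+$ and letting $\delta\downarrow 0$ this is too weak to yield $G(\epsilon)=o(\epsilon)$. I would therefore control the increment more finely, exploiting that inside $\mathcal C$ one has $\partial_1 v=-c-\tfrac{\sigma^2}{2}\partial_2^2 v$ with $\partial_1 v\ge 0$ (Proposition~\ref{v-is-nondecr}), and using the monotonicity of $b_1$ (Proposition~\ref{T:b1b2}) to erect a parabolic barrier near the boundary bounding $v(t_0+s,\pi_0+\epsilon)-v(t_0,\pi_0+\epsilon)$ by $o(\epsilon)$ uniformly. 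I expect this to be the main obstacle: the time- and space-regularity of $v$ at the free boundary are coupled, and the estimate is borderline, which is precisely why the generality of $\mu$ (so that $\sigma$ is merely $C^1$) prevents an off-the-shelf argument. The upper boundary $b_2$ is treated symmetrically, giving slope $-1$ there; combining the matching one-sided derivatives at $b_1$ and $b_2$ with the interior $C^{1,2}$ regularity established above, and noting that the only potential kink point $\pi=1/2$ lies strictly inside $\mathcal C$ by Lemma~\ref{v-at-1/2}, yields that $\pi\mapsto v(t,\pi)$ is $C^1$ on all of $(0,1)$.
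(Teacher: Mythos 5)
Your handling of the boundary identities and of the PDE in the continuation region is correct and is essentially the paper's own (standard) route via the strong Markov property and interior parabolic regularity, and your reduction of smooth fit, by concavity, to the estimate $G(\ep)=o(\ep)$ is also the paper's first step. The genuine gap is exactly the one you flag yourself: the time-increment term in your exit-time scheme is never controlled, and with the regularity actually available it \emph{cannot} be controlled. Concretely, your inequality reads $q_+G(\ep)\le \E\bigl[(v(t_0+\tau,\pi_0+\ep)-v(t_0,\pi_0+\ep))\Ind_{\{\mathrm{up}\}}\bigr]+c\,\E[\tau]$ with $q_+=\delta/(\delta+\ep)$ and $\E[\tau]\asymp\delta\ep$. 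The only time regularity of $v$ known at this stage of the paper is the bound $0\le v(t_2,\pi)-v(t_1,\pi)\le C\sqrt{t_2-t_1}$ implicit in Proposition~\ref{v-is-cont}; by Cauchy--Schwarz this bounds the increment term by $C\sqrt{\E[\tau]\,q_+}$, which after dividing by $q_+$ gives $G(\ep)\lesssim\sqrt{\ep(\delta+\ep)}+\ep(\delta+\ep)$. Since $\sqrt{\ep(\delta+\ep)}\ge\ep$ for \emph{every} choice of $\delta=\delta(\ep)>0$, no trade-off in $\delta$ can produce $o(\ep)$. The ``parabolic barrier'' that is supposed to beat the square-root bound is not constructed, and constructing it — a uniform $o(\ep)$ bound on $v(t_0+s,\pi_0+\ep)-v(t_0,\pi_0+\ep)$ near the free boundary — is essentially a boundary-regularity statement of the same strength as smooth fit itself; the circularity you anticipate is real, so the proof is incomplete at its decisive step.

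The paper sidesteps time regularity of $v$ entirely by a different coupling, and this is the idea your argument is missing. It runs the \emph{two} processes $\Pi^{0,\pi}$ and $\Pi^{0,\pi+\ep}$ (with $\pi=b_1(0)$) against the \emph{same} driving Brownian motion and evaluates both at the \emph{same} stopping time $\tau^\ep$, namely the optimal one for the upper process. Then $v(0,\pi+\ep)=\E[g(\Pi^{0,\pi+\ep}_{\tau^\ep})+c\tau^\ep]$ while $v(0,\pi)\le\E[g(\Pi^{0,\pi}_{\tau^\ep})+c\tau^\ep]$, so the observation-cost terms cancel identically in the difference and no increment of $v$ in time ever appears. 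The martingale property of both processes and the structure of $g$ then give $v(0,\pi+\ep)-v(0,\pi)\ge\ep-2\,\E\bigl[(\Pi^{0,\pi+\ep}_{\tau^\ep}-\Pi^{0,\pi}_{\tau^\ep})\Ind_{\{\Pi^{0,\pi+\ep}_{\tau^\ep}>1/2\}}\bigr]$, and the error term is shown to be $o(\ep)$ by Cauchy--Schwarz combined with three estimates: a Gronwall/SDE-stability bound $\E[(\Pi^{0,\pi+\ep}_{T\wedge\tau^\ep}-\Pi^{0,\pi}_{T\wedge\tau^\ep})^2]\le\ep^2e^{D(T)^2T}$ (which itself needs a separate truncation of $\sigma$, since $\sigma$ is only locally Lipschitz in $\pi$); the bound $c\,\E[\tau^\ep]\le\ep/(1-\pi)$, obtained by comparing $g$ with an affine minorant and used via Markov's inequality to control $\P(\tau^\ep>T)$; and the martingale exit bound $\P(\Pi^{0,\pi+\ep}_{\tau^\ep}>1/2)\le 2\ep/(1-2\pi)$. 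If you want to salvage your write-up, replacing the single-process exit argument by this same-noise, same-stopping-time comparison is the way to do it.
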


\begin{proof}
The proof that the differential equation in \eqref{fbp} holds is based on the strong Markov property and the continuity
of $v$. However, the procedure is standard and we therefore omit the argument, 
referring to the proof of \cite[Theorem~7.7]{KS2} for the details instead.
The value of $v$ for $\pi\notin (b_1(t),b_2(t))$ follows from concavity and the definition of $b_1$ and $b_2$.

For the \mbox{smooth-fit} condition, note that the value function $\pi\mapsto v(t,\pi)$ is continuous on $(0,1)$ and 
$C^1$ for $\pi\in (b_1(t),b_2(t))$ as well as for $\pi\in (0,b_1(t))\cup(b_2(t),1)$. Thus it remains to check the $C^1$ property at $b_1(t)$ and $b_2(t)$. To prove the 
$C^1$ property at $b_1(t)$ (the $C^1$ property at $b_2(t)$ being completely analogous),
note that since $v$ is concave in $\pi$, it suffices to show that
\begin{IEEEeqnarray}{rCl}
\label{liminf}
\liminf_{\ep\downarrow 0}\frac{v(t,b(t) + \ep)-v(t,b(t))}{\ep}\geq 1.
\end{IEEEeqnarray}
Without loss of generality, we do this for $t=0$, letting $\pi=b_1(0)$. 

Let $\ep \in (0,1/2-\pi)$ and denote by $\tau^\ep$ the first hitting time of the stopping region for $\Pi^{0,\pi+\ep}$. Then
\begin{IEEEeqnarray*}{rCl}
v(0,\pi + \ep)-v(0,\pi) &\geq& \E\left[ g(\Pi_{\tau^\ep}^{0,\pi+\ep})-g(\Pi_{\tau^\ep}^{0,\pi})\right]\\
&\geq& \ep -2\E\left[(\Pi_{\tau^\ep}^{0,\pi+\ep}-\Pi_{\tau^\ep}^{0,\pi})\Ind_{\{\Pi_{\tau^\ep}^{0,\pi+\ep}>1/2\}}\right].
\end{IEEEeqnarray*}
Thus, to prove \eqref{liminf} it suffices (by the Cauchy-Schwartz inequality) to show that 
\begin{IEEEeqnarray}{rCl}
\label{CS}
\E\left[(\Pi_{\tau^\ep}^{0,\pi+\ep}-\Pi_{\tau^\ep}^{0,\pi})^2\right]\P\left(\Pi_{\tau^\ep}^{0,\pi+\ep}>1/2\right)=o(\ep^2)
\end{IEEEeqnarray}
as $\ep\to 0$.
To do this, first assume that $\sigma$ is Lipschitz continuous in $\pi$ on any compact time interval,
and define
\begin{IEEEeqnarray*}{rCl}
h(t):=\E\left[(\Pi_{t\wedge\tau^\ep}^{0,\pi+\ep}-\Pi_{t\wedge\tau^\ep}^{0,\pi})^2\right].
\end{IEEEeqnarray*}
Fixing $T>0$, for $t\in[0,T]$ we have
\begin{IEEEeqnarray*}{rCl}
h(t)&=& \E\left[\left(\ep + \int_0^{t\wedge\tau^\ep} \sigma(s,\Pi_s^{0,\pi+\ep})-\sigma(s,\Pi_s^{0,\pi})\, \vd \hat{W}_s\right)^2\right]\\
&\leq& \ep^2 + \int_0^t \E\left[D(T)^2\left( \Pi_s^{0,\pi+\ep}-\Pi_s^{0,\pi}\right)^2 \Ind_{\{s\leq \tau^\ep\}}\right] \vd s\\
&\leq& \ep^2 +D(T)^2 \int_0^t  h(s)\,\vd s,
\end{IEEEeqnarray*}
where $D(T)$ is a Lipschitz constant for $\sigma$ on $[0,T]\times(0,1)$. 
Consequently, Gronwall's inequality yields 
\begin{IEEEeqnarray}{rCl}
\label{P0}
h(T)\leq \ep^2e^{D(T)^2T}.
\end{IEEEeqnarray}

Next, denote by $f(y):=\pi-\frac{\pi}{1-\pi}(y-\pi)$ the affine function satisfying $f(\pi)=\pi$ and $f(1)=0$, and note that
$f\leq g$ on $[\pi,1]$. Therefore, 
\begin{IEEEeqnarray*}{rCl}
\label{E:VFineq}
c\E[\tau^\ep] &=& v(0,\pi+\ep)- \E[g(\Pi_{\tau^\ep}^{0,\pi+\ep})] \\
& \leq& g(\pi+\ep) - \E[f(\Pi_{\tau^\ep}^{0,\pi+\ep})]\\
&=& \pi+\ep-(\pi-\frac{\pi}{1-\pi}\E[\Pi_{\tau^\ep}^{0,\pi+\ep}-\pi])\\
&=& \ep/(1-\pi),
\end{IEEEeqnarray*}
where the inequality follows from  the monotonicity of $b_1$ and the last equality by optional sampling.
Thus, writing $D= 1/(1-\pi)$, we have
%
%
\begin{IEEEeqnarray}{rCl}
\label{P1}
\P(\tau_\ep>T)\leq D\ep/(c T).
\end{IEEEeqnarray}
Moreover, writing 
\[\tau_{\pi,1/2}:=\inf\{s\geq 0:\Pi^{0,\pi+\ep}_s\notin (\pi,1/2)\},\]
we have 
\begin{IEEEeqnarray}{rCl}
\label{P2}
\P\left(\Pi_{\tau^\ep}^{0,\pi+\ep}>1/2\right) \leq \P\left(\Pi_{\tau_{\pi,1/2}}^{0,\pi+\ep}=1/2\right) \leq \frac{\ep}{\frac{1}{2}-\pi}= C\ep
\end{IEEEeqnarray}
for $C=2/(1-2\pi)$,
where we used the martingality of $\Pi$ to obtain the second inequality.
Putting together \eqref{P0}, \eqref{P1} and \eqref{P2} yields
\begin{IEEEeqnarray*}{rCl}
&& \E\left[(\Pi_{\tau^\ep}^{0,\pi+\ep}-\Pi_{\tau^\ep}^{0,\pi})^2\right] \P\left(\Pi_{\tau^\ep}^{0,\pi+\ep}>1/2\right) \\
&\leq& \left( \E\left[(\Pi_{\tau^\ep}^{0,\pi+\ep}-\Pi_{\tau^\ep}^{0,\pi})^2 \Ind_{\{\tau_\ep\leq T\}}\right]+ \P(\tau_\ep > T)\right)
\P\left(\Pi_{\tau^\ep}^{0,\pi+\ep}>1/2\right)\\
&\leq& \ep^2 C (\ep e^{D(T)^2 T} + D/(cT)).
\end{IEEEeqnarray*}
Given $\delta>0$, it is possible to choose $T$ large enough so that $CD/(cT)\leq \delta/2$, 
and then to choose $\ep>0$ small enough so that $C \ep e^{D^2(T) T}\leq \delta/2$. This proves \eqref{CS}
and thus finishes the proof of the smooth-fit property if $\sigma$ is Lipschitz in $\pi$, locally uniformly in $t$.

For a general $\sigma$, due to the $C^1$ regularity of $\sigma$ on $[0, \infty)\times(0,1)$, one can find another volatility function $\hat\sigma$ that is Lipschitz continuous in $\pi$ on any given compact 
interval in time, and that satisfies 
$0\leq \hat\sigma\leq \sigma$ everywhere and $\hat\sigma=\sigma$ on $[0,\infty)\times[b_1(0),b_2(0)]$.
By monotonicity in the volatility, the corresponding value function $\hat v$ satisfies $\hat v\geq v$. On the other hand, 
since $\hat\sigma=\sigma$ on $[0,\infty)\times[b_1(0),b_2(0)]$ and since $\tau^*$ is optimal for the volatility $\sigma$, 
we also have $\hat v\leq v$, so $\hat v=v$. By the above argument, $\hat v$ is $C^1$, which finishes the proof.
\end{proof}

\begin{theorem}
The boundaries $b_1$ and $b_2$ are both continuous.
\end{theorem}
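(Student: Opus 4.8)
The plan is to prove that $b_1$ is continuous, the argument for $b_2$ being symmetric. By Proposition~\ref{T:b1b2}, $b_1$ is non-decreasing and right-continuous with left limits, so its only possible discontinuities are upward jumps, and I will rule these out by contradiction. Suppose $b_1(t_0-) =: \pi_1 < \pi_2 := b_1(t_0)$ at some $t_0 > 0$, and fix $\pi_1 < \alpha < \beta < \pi_2$. Since $b_1$ is non-decreasing, $b_1(t) \le \pi_1 < \alpha$ for every $t < t_0$, so the rectangle $(t_0 - \delta, t_0) \times (\alpha, \beta)$ lies in $\C$ for $\delta$ small; at the terminal time $t_0$, however, $(t_0, \pi) \in \D$ for every $\pi \in (\alpha, \beta)$, where $v(t_0, \pi) = g(\pi) = \pi$.

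The crux is a quantitative strict-concavity bound in $\C$. By the interior regularity underlying Proposition~\ref{T:SmoothFit}, $v$ is $C^{1,2}$ in $\C$ and solves $\partial_1 v + \frac{\sigma^2}{2}\partial_2^2 v + c = 0$ there. Since $v$ is non-decreasing in $t$ (Proposition~\ref{v-is-nondecr}), we have $\partial_1 v \ge 0$, whence
\[
\partial_2^2 v(t,\pi) = -\frac{2\lt(\partial_1 v(t,\pi)+c\rt)}{\sigma(t,\pi)^2} \le -\frac{2c}{\bar\sigma^2}
\]
on the rectangle, where $\bar\sigma^2 := \sup\{\sigma(t,\pi)^2 : (t,\pi) \in [t_0-\delta,t_0]\times[\alpha,\beta]\} < \infty$ by continuity of $\sigma$ (Proposition~\ref{T:C1}). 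This uniform bound, which hinges precisely on the time-monotonicity of $v$, is what drives the contradiction.

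Integrating the bound via Taylor's formula, for each $t < t_0$ one obtains
\[
v(t,\beta) \le v(t,\alpha) + \partial_2 v(t,\alpha)(\beta-\alpha) - \frac{c}{\bar\sigma^2}(\beta-\alpha)^2,
\]
so that
\[
\partial_2 v(t,\alpha) \ge \frac{v(t,\beta)-v(t,\alpha)}{\beta-\alpha} + \frac{c}{\bar\sigma^2}(\beta-\alpha).
\]
Since $v$ is continuous up to $t_0$ (Proposition~\ref{v-is-cont}) with $v(t_0,\alpha)=\alpha$ and $v(t_0,\beta)=\beta$, letting $t \uparrow t_0$ gives $\liminf_{t\uparrow t_0}\partial_2 v(t,\alpha) \ge 1 + \frac{c}{\bar\sigma^2}(\beta-\alpha) > 1$. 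On the other hand, for $t<t_0$ the smooth-fit condition (Proposition~\ref{T:SmoothFit}) gives $\partial_2 v(t,b_1(t))=g'(b_1(t))=1$, and since $\partial_2^2 v<0$ in $\C$ the map $\pi\mapsto\partial_2 v(t,\pi)$ is strictly decreasing there, so $\partial_2 v(t,\alpha)<1$ because $\alpha>\pi_1\ge b_1(t)$. This contradicts the lower bound, so no jump can occur and $b_1$ is continuous. The case of $b_2$ is identical after replacing the slope $1$ at $b_1$ by the slope $-1$ at $b_2$ and Taylor-expanding from $\beta$ toward $\alpha$.

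I expect the main obstacle to be the justification of the interior $C^{1,2}$ regularity and the pointwise validity of the PDE in $\C$, so that $\partial_2^2 v$ is genuinely controlled, together with a careful passage to the limit $t\uparrow t_0$ exploiting the continuity of $v$ up to the boundary. Once the uniform bound $\partial_2^2 v \le -2c/\bar\sigma^2$ is in hand, the contradiction with the smooth-fit slope is immediate.
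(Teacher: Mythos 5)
Your proposal is correct and takes essentially the same route as the paper: assume an upward jump of $b_1$ at $t_0$, use $\partial_1 v \ge 0$ together with the PDE in the continuation region to get a uniform bound $\partial_2^2 v \le -d < 0$ on a rectangle below the jump, and combine this with smooth fit and the continuity of $v$ to reach a contradiction. The only cosmetic difference is the final bookkeeping: the paper integrates $\partial_2^2(v-g)$ twice starting from $b_1(t)$ and contradicts $v=g$ at $(t_0,\cdot)$ in the stopping region, whereas you Taylor-expand between two interior points $\alpha<\beta$ and contradict the bound $\partial_2 v(t,\alpha)\le 1$ coming from smooth fit and concavity.
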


\begin{proof}
Let us prove continuity of $b_1$ (the proof for $b_2$ is analogous). We know that $b_1$ is right-continuous, so it suffices to assume for a contradiction that $b_1$ is not continuous at some time $t_0>0$. By monotonicity,  $b_1(t_0)>b_1(t_0-)$. 
In the continuation region, $\partial_1 v\geq 0$, so \eqref{fbp} yields
\[\frac{\sigma^2}{2}\partial^2_2 v\leq -c.\]
Since $\sigma$ is locally bounded away from zero, this means that on each compact set we can find some constant $d>0$ such that $\partial^2_2 v\leq -d$. 
By Proposition~\ref{T:SmoothFit}, the map $\pi \mapsto v(t,\pi)$ is $C^1$ on $[b_1(t), b_2(t)]$ for any $t\geq 0$, so
for $t<t_0$ and $b_1(t)<\pi<b_1(t_0)$,  we have
\begin{IEEEeqnarray*}{rCl}
v(t,\pi)-g(\pi) &=& \int_{b_1(t)}^\pi \int_{b_1(t)}^w \partial_2^2(v-g)(t,u)\ud u \ud w\\
&\leq& -d(\pi-b_1(t))^2/2.
\end{IEEEeqnarray*}
Choosing $\pi=  \frac{b_1(t_0-)+b_1(t_0)}{2}$ and letting $t\to t_0$ gives
\[v(t_0,\frac{b_1(t_0-)+b_1(t_0)}{2})-g(\frac{b_1(t_0-)+b_1(t_0)}{2})\leq -d(b_1(t_0)-b_1(t_0-))^2/2<0.\]
This contradicts the assumption that $(t_0,\frac{b_1(t_0-)+b_1(t_0)}{2})$ belongs to the stopping region, so
$b_1$ has to be continuous.
\end{proof}

\begin{remark}
Even though, in this section, all the results are formulated for the perpetual problem \eqref{v}, it is straightforward to check that the corresponding results for the finite-horizon problem \eqref{vT} also hold. In that case, the boundaries $b_1^{T}:[0,1] \into (0,1)$ and $b_2^{T}:[0,1] \into (0,1)$ are continuous and monotone, with $0< b_1^T < 1/2 < b_2^T < 1$ on $[0, T)$ and $b_1^{T}(T)=b_2^{T}(T)=1/2$. Also, the assertions of Proposition \ref{T:SmoothFit} hold for $(v^T, b_1^T, b_2^T)$ on the time interval $[0, T)$ in place of $(v, b_1, b_2)$.
\end{remark}

\section{Integral equations for the boundaries}
\label{S:inteqn}

It is well-known that optimal stopping boundaries, under some conditions, can be characterized by certain
integral equations, compare \cite{J} and \cite{P2}.
In this section, we study the integral equations for the optimal stopping boundaries arising in our sequential testing problem.
For the problem \eqref{vT} with finite horizon, a pair of integral equations is shown to completely characterise the optimal stopping
boundaries within the class of continuous solutions. The situation in the perpetual case is more delicate, and uniqueness of 
solutions remains an open question.

\subsection{A pair of integral equations for the finite-horizon boundaries}

\begin{theorem}
\label{inteqn}
Assume that $T<\infty$. Then the pair $(b^T_1,b^T_2)$ is the unique continuous solution of 
\begin{IEEEeqnarray}{rCl}
\label{integralequations}
\left\{\begin{array}{ll}
c_1(t) = \E \left[g(\Pi^{t,c_1(t)}_T)\right]+ c\int_0^{T-t} \P(c_1(t+u)<\Pi^{t,c_1(t)}_{t+u}<c_2(t+u))\,\vd u\\
1-c_2(t) =\E \left[g(\Pi^{t,c_2(t)}_T)\right]+  c\int_0^{T-t} \P(c_1(t+u)<\Pi^{t,c_2(t)}_{t+u}<c_2(t+u))\,\vd u\end{array}\right.
\end{IEEEeqnarray}
such that 
$0<c_1(t) \leq 1/2 \leq c_2(t)<1$ for all $t\in[0,T]$.
\end{theorem}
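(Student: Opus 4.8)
The plan is to prove the theorem in two stages: first that the genuine boundaries $(b_1^T,b_2^T)$ solve the system \eqref{integralequations}, and then that they are the \emph{only} continuous solution obeying the stated bounds. For the first stage I would exploit the finite-horizon analogues of Proposition~\ref{T:SmoothFit}: $v^T$ is $C^{1,2}$ separately on the continuation region $\mathcal C^T$ and on the interior of the stopping region, it solves $\partial_1 v^T+\tfrac{\sigma^2}{2}\partial_2^2 v^T=-c$ on $\mathcal C^T$, and it equals $g$ (which is affine off $\pi=1/2$, so $\partial_1 g=\partial_2^2 g=0$) on the stopping region, while smooth fit makes $\pi\mapsto v^T(t,\pi)$ of class $C^1$ across the continuous boundaries. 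Applying the change-of-variable (Itô--Tanaka) formula to $s\mapsto v^T(t+s,\Pi^{t,\pi}_{t+s})$ on $[0,T-t]$, the smooth fit annihilates the local-time terms on the boundary curves, leaving
\[
v^T(T,\Pi_T^{t,\pi})=v^T(t,\pi)-c\int_0^{T-t}\Ind_{\{(t+u,\Pi^{t,\pi}_{t+u})\in\mathcal C^T\}}\,\ud u+M_{T-t},
\]
with $M$ a martingale. Taking expectations, using $v^T(T,\cdot)=g$, and substituting $\pi=b_1^T(t)$ (where $v^T=g=b_1^T(t)$ since $b_1^T(t)<1/2$) and $\pi=b_2^T(t)$ (where $v^T=g=1-b_2^T(t)$) converts this identity into precisely the two equations in \eqref{integralequations}.

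For uniqueness I would follow the programme of \cite{P2,PS06}. Let $(c_1,c_2)$ be any continuous solution with $0<c_1\le 1/2\le c_2<1$, write $\mathcal C_c:=\{c_1(t)<\pi<c_2(t)\}$ with complement $\mathcal D_c$, and define
\[
U(t,\pi):=\E\!\left[g(\Pi^{t,\pi}_T)\right]+c\int_0^{T-t}\P\!\left(c_1(t+u)<\Pi^{t,\pi}_{t+u}<c_2(t+u)\right)\ud u.
\]
The Markov property gives $U(t,\pi)=\E[U(t+h,\Pi_{t+h}^{t,\pi})]+c\,\E\int_0^{h}\Ind_{\mathcal C_c}\,\ud u$, which extends to stopping times by optional sampling. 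The first step is to show $U=g$ on $\mathcal D_c$: for $\pi\le c_1(t)$, stop at the first time $\rho$ the process meets the curve $c_1$ or reaches $T$; on $[0,\rho]$ the indicator $\Ind_{\mathcal C_c}$ vanishes, the integral equations give $U=g$ on the curve, and $U(T,\cdot)=g$, so $U(t,\pi)=\E[g(\Pi_{t+\rho})]$; since the path stays in $\{\pi\le 1/2\}$ where $g(\pi)=\pi$ is affine and $\Pi$ is a martingale, this equals $\E[\Pi_{t+\rho}]=\pi=g(\pi)$ (symmetrically for $\pi\ge c_2(t)$). Stopping instead at the exit time $\tau_c$ from $\mathcal C_c$ then yields $U(t,\pi)=\E[g(\Pi_{t+\tau_c})+c\,\tau_c]\ge v^T(t,\pi)$.

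The decisive step is the reverse domination $U\le g$ everywhere. On $\mathcal C_c$, standard parabolic regularity shows $U$ is $C^{1,2}$ and solves $\partial_1 U+\tfrac{\sigma^2}{2}\partial_2^2 U=-c$, while $U=g$ on $\mathcal D_c$; applying the change-of-variable formula with local time on the curves $c_1,c_2$ and controlling the signs of the local-time coefficients gives $U\le g$. Granting this, $U\le g$ together with $\partial_1 U+\tfrac{\sigma^2}{2}\partial_2^2 U+c\ge 0$ makes $U$ a subharmonic minorant of $g$, whence $U\le v^T$; combined with $U\ge v^T$ this forces $U=v^T$. Finally, were $\mathcal C_c$ to meet $\mathcal D^T$, openness of $\mathcal C_c$ and continuity of $b_1^T,b_2^T$ would place a whole neighborhood inside $\mathcal C_c\cap\operatorname{int}\mathcal D^T$, where simultaneously $\partial_1 U+\tfrac{\sigma^2}{2}\partial_2^2 U=-c$ and $U=v^T=g$ with $\partial_1 g+\tfrac{\sigma^2}{2}\partial_2^2 g=0$, a contradiction; the symmetric argument excludes $\mathcal C^T\cap\mathcal D_c$. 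Hence $\mathcal C_c=\mathcal C^T$ and $c_i=b_i^T$.

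The genuinely delicate point is the domination $U\le g$ on $\mathcal C_c$. Unlike for the true value function, smooth fit across the candidate curves $c_1,c_2$ is not available a priori for an arbitrary continuous solution, so one must justify the regularity of $U$ and carefully treat the local-time terms in the change-of-variable formula, showing that the jumps of $\partial_2 U$ across the candidate boundaries carry the favorable sign (equivalently, that $U$ cannot poke above $g$ in $\mathcal C_c$). The remaining ingredients are routine in this setting: the payoff $g$ is bounded in the finite horizon, $\sigma$ is continuous and strictly positive (Proposition~\ref{T:C1}, Corollary~\ref{T:SD}), and the boundaries are continuous and monotone, so no integrability difficulties of the perpetual problem intervene.
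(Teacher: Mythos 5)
Your existence half and your first two uniqueness steps (that $U=g$ on $\mathcal D_c$, and that $U\ge v^T$ via the exit time from between the candidate curves) coincide with the paper's proof. The proposal fails at the step you yourself flag as decisive: the domination $U\le g$. This is a genuine gap, not a deferrable technicality, and the mechanism you sketch cannot close it. First, there are no local-time signs to control: by the Markov property, $M_s=U(t+s,\Pi^{t,\pi}_{t+s})+c\int_0^s\Ind_{(c_1(t+u),c_2(t+u))}(\Pi^{t,\pi}_{t+u})\,\vd u$ is a martingale, so once a change-of-variable formula applies, the local-time integrals along $c_1,c_2$ constitute a continuous martingale of bounded variation and hence vanish identically; smooth fit across the candidate curves is automatic, and nothing about $U\le g$ follows from it. (Worse, the formula you invoke is justified in the paper by the \emph{monotonicity} of the true boundaries; a candidate solution is only assumed continuous.) Second, no comparison principle can yield $U\le g$ on $\mathcal C_c$: there $\partial_1U+\tfrac{\sigma^2}{2}\partial_2^2U=-c\le 0$, but $g$ is itself a distributional supersolution (its concave kink at $\pi=1/2$, which lies inside $\mathcal C_c$, contributes $\partial_2^2 g=-2\delta_{1/2}$), so $U-g$ vanishes on the parabolic boundary of $\mathcal C_c$ while its parabolic operator has no definite sign; an interior bump of $U$ above $g$ is not excluded. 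In fact, since you already have $U\ge v^T$, and since $U\le g$ together with the submartingale property of $U(t+s,\Pi_{t+s})+cs$ would give $U\le v^T$ and hence the theorem, the assertion ``$U\le g$ for every continuous solution'' is logically equivalent to the uniqueness claim itself; asserting it is circular.

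The paper replaces this step with a boundary-comparison argument that needs only your first two steps and optional sampling, never touching the regularity of $U$ or of the candidate curves. Suppose $b_1^T(t)>c_1(t)$ for some $t$; start at $\pi=c_1(t)$ and let $\gamma_b$ be the first time the process reaches the curve $b_1^T$, capped at $T-t$. Before $\gamma_b$ the process stays below $b_1^T$, so the analogous martingale $\tilde M$ built from $(v^T,b_1^T,b_2^T)$ carries no running-cost term, while $M$ accrues a strictly positive expected occupation of $(c_1,c_2)$ (by right-continuity of the curves). Since $U(t,c_1(t))=c_1(t)=g(c_1(t))=v^T(t,c_1(t))$, optional sampling of $M$ and $\tilde M$ gives
\[
0=\E\bigl[(U-v^T)(t+\gamma_b,\Pi^{t,\pi}_{t+\gamma_b})\bigr]
+c\,\E\Bigl[\int_0^{\gamma_b}\Ind_{(c_1(t+u),c_2(t+u))}(\Pi^{t,\pi}_{t+u})\,\vd u\Bigr]>0,
\]
a contradiction (the first term is nonnegative by your second step). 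Hence $b_1^T\le c_1$ and, symmetrically, $c_2\le b_2^T$. Comparing the two integral representations of $v^T$ and $U$ then yields $U\le v^T$, so $U=v^T$, and the boundaries are identified as in your last paragraph --- or more simply: if $b_1^T<c_1$ somewhere, then on the points in between one has $v^T<g=U$, contradicting $U=v^T$. You should substitute this comparison argument for your domination step; as written, the uniqueness proof is incomplete.
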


\begin{proof}
For $(t,\pi)\in [0,T)\times (0,1)$ and $\ep>0$ small enough, let
\[\tau_\ep:=\inf\{s\geq 0:\Pi_{t+s}^{t,\pi}\notin(\ep,1-\ep)\}\wedge (T-t-\ep).\]
Applying Ito's formula (more precisely, an extension of Ito's formula, 
see \cite[Theorem 3.1 and Remark 3.2]{P}, which can be applied thanks to the monotonicity of $b_1^T$ and $b_2^T$) to the  
process $v^T(t+s\wedge\tau_\ep,\Pi^{t,\pi}_{t+s\wedge\tau_\ep})$ and then taking expectations yields
\begin{IEEEeqnarray*}{rCl}
\E \left[v(t+\tau_\ep,\Pi^{t,\pi}_{t+\tau_\ep})\right]=v^T(t,\pi)-c\E\left[\int_0^{\tau_\ep}\Ind_{(b^T_1(t+u),b^T_2(t+u))}(\Pi^{t,\pi}_{t+u})\,\vd u\right].
\end{IEEEeqnarray*}
Since $\tau_\ep\to T-t$ as $\ep\to 0$, it follows from Proposition~\ref{v-is-cont} and bounded convergence that
\begin{IEEEeqnarray}{rCl}
\label{vT}
\E \left[g(\Pi^{t,\pi}_T)\right]=v^T(t,\pi)-c\int_0^{T-t}\P(b^T_1(t+u)<\Pi^{t,\pi}_{t+u}<b^T_2(t+u))\,\vd u.
\end{IEEEeqnarray}
Plugging in $\pi=b^T_1(t)$ and $\pi=b^T_2(t)$ shows that $(b^T_1,b^T_2)$ solves \eqref{integralequations}.

For uniqueness, assume that $(c_1,c_2)$ is another continuous solution to \eqref{integralequations} with
$0<c_1(t) \leq 1/2 \leq c_2(t)<1$, and define
\begin{IEEEeqnarray}{rCl}
\label{V}
V(t,\pi):=\E \left[g(\Pi^{t,\pi}_T)\right] + c\int_0^{T-t}\P(c_1(t+u)<\Pi_{t+u}^{t,\pi}<c_2(t+u))\,\vd u.
\end{IEEEeqnarray}
Then $V(t,c_1(t))= c_1(t)$ and $V(t,c_2(t))= 1-c_2(t)$ by \eqref{integralequations}, and $V(T,\pi)=g(\pi)$.
Moreover, by the Markov property, the process 
\[M_s:=
V(t+s,\Pi_{t+s}^{t,\pi})
+c\int_0^{s} \Ind_{\lt(c_1(t+u),c_2(t+u)\rt)}(\Pi^{t,\pi}_{t+u})\,\vd u\]
is a martingale for any $(t,\pi)$. In particular, the process 
\[\tilde M_s:=
v^T(t+s,\Pi_{t+s}^{t,\pi})
+c\int_0^{s} \Ind_{\lt( b^T_1(t+u), b^T_2(t+u)\rt)}(\Pi^{t,\pi}_{t+u})\,\vd u\]
is also a martingale.

\noindent
\underline{Claim 1}: $V(t,\pi)=g(\pi)$ for $\pi\notin(c_1(t),c_2(t))$.

\noindent
Assume that $\pi \leq c_1(t)$ (the case $\pi \geq c_2(t)$ is similar), and let 
\[\gamma_c:=\inf\{s\geq 0:\Pi^{t,\pi}_{t+s} \geq c_1(t+s)\}\wedge (T-t).\] 
Then
\begin{IEEEeqnarray*}{rCl}
V(t,\pi) &=& \E \left[V(t+\gamma_c,\Pi^{t,\pi}_{t+\gamma_c}) \right]
= \E \left[ \Pi^{t,\pi}_{t+\gamma_c}\right] = \pi=g(\pi),
\end{IEEEeqnarray*}
whith the first equality being justified by optional sampling and the martingale property of $M$, the second by \eqref{integralequations}, and the third by optional sampling and the martingale property of $\Pi$. 

\noindent
\underline{Claim 2}: $V\geq v^T$.

\noindent
Take $(t,\pi)$ such that $c_1(t)< \pi< c_2(t)$, and let 
\[\tau_c:=\inf\{s\geq 0:\Pi^{t,\pi}_{t+s}\notin(c_1(t+s),c_2(t+s))\}\wedge (T-t).\] 
Then
\begin{IEEEeqnarray*}{rCl}
V(t,\pi) &=& \E \left[V(t+\tau_c,\Pi^{t,\pi}_{t+\tau_c}) \right] + c\E\left[\int_0^{\tau_c} \Ind_{\lt(c_1(t+u),c_2(t+u)\rt)}(\Pi^{t,\pi}_{t+u})\,\vd u\right]\\
&=& \E \left[g(\Pi^{t,\pi}_{t+\tau_c})\right] + c\E\left[\tau_c\right] \geq v^T(t,\pi).
\end{IEEEeqnarray*}
From this and Claim 1, Claim 2 follows.

\noindent
\underline{Claim 3}: $b_1^T\leq c_1$ and $c_2\leq b^T_2$.

\noindent
Assume that $b_1^T(t)> c_1(t)$ for some $t$. Choose $\pi=c_1(t)$, and let
\[\gamma_b:=\inf\{s\geq 0:\Pi^{t,\pi}_{t+s}\geq b_1^T(t+s)\}\wedge (T-t).\]
Then, by right-continuity of $b_1^T$ and $c_1$, 
\begin{IEEEeqnarray}{rCl}
\label{cond}
\E\left[ \int_0^{\gamma_b} \Ind_{\lt(c_1(t+u),c_2(t+u)\rt)}(\Pi^{t,\pi}_{t+u})\,\vd u \right]>0.
\end{IEEEeqnarray}
On the other hand, by optional sampling and martingality of $M$ and $\tilde M$ we have
\begin{IEEEeqnarray*}{rCl}
0 &=& V(t,\pi)-v^T(t,\pi) \\
&=&
\E\left[V(t+\gamma_b,\Pi^{t,\pi}_{t+\gamma_b})-v^T(t+\gamma_b,\Pi^{t,\pi}_{t+\gamma_b})\right] \\
&&+ c\E\left[ \int_0^{\gamma_b} \Ind_{\lt(c_1(t+u),c_2(t+u)\rt)}(\Pi^{t,\pi}_{t+u})\,\vd u \right].
\end{IEEEeqnarray*}
Since $V\geq v$ by Claim 2, this contradicts \eqref{cond} and thus $b_1^T\leq c_1$.
The claim $c_2\leq b_2^T$ is proved similarly.

Now, it follows from \eqref{vT}, \eqref{V} and Claim 3 that $V=v^T$. Therefore, 
since $v^T<g$ for $\pi\in (b_1^T(t),b^T_2(t))$ it follows from Claims 1 and 3 that $b^T_1=c_1$ and $b^T_2=c_2$,
which finishes the proof.
\end{proof}

\begin{remark}
A closer inspection of the proof above shows that uniqueness holds in the larger class of pairs $(c_1,c_2)$ such that 
$c_1$ is right-continuous with no negative jumps and $c_2$ is right-continuous with no positive jumps.
\end{remark}

\subsection{A pair of integral equations for the infinite-horizon boundaries}
\begin{theorem}
\label{inteqn-perpetual}
The pair $(b_1,b_2)$ is a solution of 
\begin{IEEEeqnarray}{rCl}
\label{integralequations-perpetual}
\left\{\begin{array}{ll}
b_1(t) =  c\int_0^\infty \P(b_1(t+u)<\Pi^{t,b_1(t)}_{t+u}<b_2(t+u))\,\vd u\\
1-b_2(t) =  c\int_0^\infty \P(b_1(t+u)<\Pi^{t,b_2(t)}_{t+u}<b_2(t+u))\,\vd u.\end{array}\right.
\end{IEEEeqnarray}
\end{theorem}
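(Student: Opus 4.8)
The plan is to establish a perpetual potential identity for the value function $v$, entirely analogous to the finite-horizon identity obtained in the proof of Theorem~\ref{inteqn}, and then to evaluate it along the two boundaries. First I would fix $S>0$ and $\pi\in(0,1)$, and for small $\ep>0$ set $\tau_\ep:=\inf\{s\ge 0:\Pi^{t,\pi}_{t+s}\notin(\ep,1-\ep)\}\wedge S$; stopping at $\tau_\ep$ confines the process to a compact subset of $(0,1)$ where $\sigma$ is well behaved. By interior parabolic regularity $v$ is $C^{1,2}$ in $\mathcal C$, and Proposition~\ref{T:SmoothFit} gives that $v$ solves \eqref{fbp} with the smooth-fit property. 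Applying the extension of Itô's formula used in Theorem~\ref{inteqn} (via \cite[Theorem 3.1 and Remark 3.2]{P}, licensed by the monotonicity and continuity of $b_1,b_2$ together with smooth fit, so that no boundary local-time term appears) to $s\mapsto v(t+s,\Pi^{t,\pi}_{t+s})$, and noting that $\partial_1 v+\frac{\sigma^2}{2}\partial_2^2 v=-c$ in $\mathcal C$ while $v=g$ is affine in $\pi$ with no time dependence in the interior of $\mathcal D$ (so the same expression vanishes there), the bounded-variation part equals $-c\,\Ind_{(b_1,b_2)}(\Pi)$. Taking expectations kills the martingale part and yields
\[
\E\left[v(t+\tau_\ep,\Pi^{t,\pi}_{t+\tau_\ep})\right]=v(t,\pi)-c\,\E\left[\int_0^{\tau_\ep}\Ind_{(b_1(t+u),b_2(t+u))}(\Pi^{t,\pi}_{t+u})\,\vd u\right].
\]

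Since $\Pi^{t,\pi}$ stays in $(0,1)$ over the compact interval $[0,S]$, one has $\tau_\ep=S$ for all small $\ep$ pathwise, so letting $\ep\downarrow 0$ and invoking the continuity of $v$ (Proposition~\ref{v-is-cont}) together with bounded convergence gives
\[
\E\left[v(t+S,\Pi^{t,\pi}_{t+S})\right]=v(t,\pi)-c\int_0^S\P(b_1(t+u)<\Pi^{t,\pi}_{t+u}<b_2(t+u))\,\vd u.
\]
Next I would send $S\to\infty$. The integrand is nonnegative, so the integral increases monotonically to $\int_0^\infty$. For the left-hand side, $\Pi^{t,\pi}$ is a bounded martingale, hence converges a.s.; by Proposition~\ref{T:PiInf}, applied in the moving frame in which $\mu_{t,x(t,\pi)}$ is regarded as the prior, its limit is $\{0,1\}$-valued, so $g(\Pi^{t,\pi}_{t+S})\to 0$ a.s. As $0\le v\le g\le 1/2$, dominated convergence gives $\E[v(t+S,\Pi^{t,\pi}_{t+S})]\to 0$. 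Hence, for every $(t,\pi)$,
\[
v(t,\pi)=c\int_0^\infty\P(b_1(t+u)<\Pi^{t,\pi}_{t+u}<b_2(t+u))\,\vd u,
\]
the right-hand integral being finite because the left-hand side is.

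Finally I would evaluate this identity at $\pi=b_1(t)$ and $\pi=b_2(t)$. These points lie in the closed stopping region $\mathcal D$ (the complement of the open region $\mathcal C$), so $v(t,b_1(t))=g(b_1(t))=b_1(t)$ and $v(t,b_2(t))=g(b_2(t))=1-b_2(t)$, using $0<b_1(t)<1/2<b_2(t)<1$. Substituting yields exactly the system \eqref{integralequations-perpetual}.

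The main obstacle is the passage $S\to\infty$: unlike in the finite-horizon case, the payoff process is not uniformly integrable, so one cannot simply read off a terminal condition. The decisive point is the long-run degeneracy of $\Pi$ established in Proposition~\ref{T:PiInf}, which forces $g(\Pi_{t+S})\to 0$ and hence $\E[v(t+S,\Pi_{t+S})]\to 0$; this is precisely what converts the finite-window potential identity into the perpetual one. A secondary, more routine technical point is checking that the extended Itô formula applies to $v$ with no boundary local-time contribution, which is guaranteed as in Theorem~\ref{inteqn} by the monotonicity and continuity of the boundaries together with the smooth-fit property.
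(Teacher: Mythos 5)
Your proposal is correct and follows essentially the same route as the paper: the localized It\^o identity over a finite window (justified via the change-of-variable formula of \cite{P} and the monotone boundaries, exactly as in Theorem~\ref{inteqn}), then passage to the infinite horizon using the boundedness $0\le v\le g$ and the a.s.\ convergence of $\Pi$ to a $\{0,1\}$-valued limit from Proposition~\ref{T:PiInf}, and finally evaluation at $\pi=b_1(t)$ and $\pi=b_2(t)$ where $v=g$. Your write-up merely makes explicit the limiting and localization details that the paper compresses into a reference to the preceding proof.
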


\begin{proof}
For fixed $T>0$, an application of It\^{o}'s formula as in the preceding proof gives
\[\E \left[v(T,\Pi^{t,\pi}_T)\right]=v(t,\pi)-c\int_0^{T-t}\P(b_1(t+u)<\Pi^{t,\pi}_{t+u}<b_2(t+u))\,\vd u.\]
Since $v$ is bounded and $\Pi^{t,\pi}_T$ converges to either 0 or 1 as $T\to\infty$ by Proposition~\ref{T:PiInf}, we find that 
\[v(t,\pi)=c\int_0^\infty\P(b_1(t+u)<\Pi^{t,\pi}_{t+u}<b_2(t+u))\,\vd u.\]
Plugging in $\pi=b_1(t)$ and $\pi=b_2(t)$ shows that $(b_1,b_2)$ solves \eqref{integralequations-perpetual}.
\end{proof}

\begin{remark}
The main technical difficulty when trying to apply the uniqueness proof of Theorem \ref{inteqn} to the perpetual problem 
lies in the lack of a straightforward extension
of the optional sampling theorem to unbounded, possibly infinite stopping times.
\end{remark}

\subsection{The case of a symmetric volatility function}

Now assume that the volatility function is symmetric about $\pi=1/2$, i.e. 
$\sigma(t,\pi)=\sigma(t,1-\pi)$. This is the case, for example, if the prior distribution $\mu$ is symmetric about zero in the sense that 
$\mu([0,a))=\mu((-a,0))$ for all $a>0$. Then, by symmetry, $b^T_1=1-b^T_2$, and we set
$b^T:=b^T_1$. The following result is a straightforward consequence of Theorem~\ref{inteqn}.

\begin{theorem}
Assume that $\sigma$ is symmetric about $\pi=1/2$. Then
the boundary $b^T$ is the unique continuous solution of 
\begin{IEEEeqnarray}{rCl}
\label{symmetriceqn}
c(t) =\E \left[g(\Pi^{t,c(t)}_T)\right]+ c\int_0^{T-t} \P(c(t+u)<\Pi^{t,c(t)}_{t+u}<1-c(t+u))\,\vd u
\end{IEEEeqnarray}
such that $0<c(t)\leq 1/2$ for all $t\in[0,T]$.
\end{theorem}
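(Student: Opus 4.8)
The plan is to deduce the symmetric statement directly from Theorem~\ref{inteqn} by exploiting the symmetry $b^T_1 = 1 - b^T_2$ that holds under the assumption $\sigma(t,\pi)=\sigma(t,1-\pi)$. First I would record that the two integral equations in \eqref{integralequations} collapse to a single equation once we substitute $c_2 = 1 - c_1$. Concretely, writing $b^T:=b^T_1$ and using $b^T_2 = 1 - b^T$, the first equation in \eqref{integralequations} becomes exactly \eqref{symmetriceqn}: the event $\{c_1(t+u)<\Pi^{t,c(t)}_{t+u}<c_2(t+u)\}$ reads $\{c(t+u)<\Pi^{t,c(t)}_{t+u}<1-c(t+u)\}$, and the terminal term $\E[g(\Pi^{t,c(t)}_T)]$ is unchanged. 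The second equation in \eqref{integralequations}, under $c_2=1-c_1$, is the mirror image of the first: it is obtained by reflecting $\Pi \mapsto 1-\Pi$, which maps the law of $\Pi^{t,\pi}$ to that of $\Pi^{t,1-\pi}$ precisely because $\sigma$ is symmetric about $1/2$. Hence the second equation carries no information beyond the first, and solving \eqref{integralequations} with the symmetry constraint is equivalent to solving \eqref{symmetriceqn} alone.

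Next I would establish existence: since $b^T_1$ satisfies the first equation of \eqref{integralequations} and we already know $b^T_1 = 1-b^T_2$ with $0<b^T_1(t)\le 1/2$ on $[0,T]$ (this is the symmetric-case fact noted just before the theorem, following from Theorem~\ref{inteqn} and the symmetry of the value function inherited from the symmetry of $\sigma$), the function $b^T$ is a continuous solution of \eqref{symmetriceqn} in the prescribed class. For uniqueness, suppose $c$ is any continuous solution of \eqref{symmetriceqn} with $0<c(t)\le 1/2$. Then setting $c_1:=c$ and $c_2:=1-c$ produces a pair $(c_1,c_2)$ that is continuous with $0<c_1(t)\le 1/2\le c_2(t)<1$, and by the reflection argument above this pair solves \emph{both} equations of \eqref{integralequations}. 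By the uniqueness assertion of Theorem~\ref{inteqn}, we must have $c_1=b^T_1$ and $c_2=b^T_2$, whence $c=b^T$. This yields uniqueness in the class of continuous functions bounded as stated.

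The only genuinely substantive step is verifying that the reflection $\Pi\mapsto 1-\Pi$ intertwines the dynamics correctly, i.e. that symmetry of $\sigma$ forces $b^T_1=1-b^T_2$ and that the second integral equation reduces to the first. Under $\sigma(t,\pi)=\sigma(t,1-\pi)$, if $\Pi^{t,\pi}$ solves \eqref{E:PiSDE} then $1-\Pi^{t,\pi}$ is a weak solution of the same SDE started at $1-\pi$, so the two processes $1-\Pi^{t,\pi}$ and $\Pi^{t,1-\pi}$ have the same law; applying $g(\pi)=g(1-\pi)$ and noting that the continuation band $(c_1,c_2)$ is symmetric under the constraint $c_2=1-c_1$, the probabilities and expectations in the second equation map onto those in the first. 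I would therefore expect the main obstacle to be purely bookkeeping: making precise the distributional identity between $1-\Pi^{t,\pi}$ and $\Pi^{t,1-\pi}$ and confirming that the symmetry of $v^T$ (hence of the boundaries) follows, after which the theorem is an immediate corollary of Theorem~\ref{inteqn}.
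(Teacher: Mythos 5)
Your proposal is correct and follows exactly the route the paper intends: the paper merely notes that under the symmetry $\sigma(t,\pi)=\sigma(t,1-\pi)$ one has $b^T_1=1-b^T_2$ and declares the result a straightforward consequence of Theorem~\ref{inteqn}, and your argument (the reflection $\Pi\mapsto 1-\Pi$ identifying the law of $\Pi^{t,1-\pi}$ with that of $1-\Pi^{t,\pi}$, the collapse of the two equations in \eqref{integralequations} under $c_2=1-c_1$, and the appeal to the uniqueness assertion of Theorem~\ref{inteqn} applied to the pair $(c,1-c)$) supplies precisely the bookkeeping behind that remark. No gaps; this matches the paper's proof in substance.
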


\begin{remark}
Although not necessarily symmetric, all normal prior distributions as well as all two-point priors give rise to symmetric volatilities, compare Section~\ref{3}.
\end{remark}

\section{Long-term asymptotics of the volatility and the boundaries}

\label{longterm}

Since the boundaries $b_1$ and $b_2$ are monotone, the limits $b_i(\infty):=\lim_{t\to\infty}b_i(t)$, $i=1,2$, exist with
$b_1(\infty)\leq 1/2$ and $b_2(t)\geq 1/2$. In this section we determine these limits.
To do that, we first derive a few limiting properties of level curves
as well as study the limit $\sigma(\infty,\pi):=\lim_{t\to\infty}\sigma(t,\pi)$ of the volatility.

Let us define
\begin{IEEEeqnarray}{rCl}
\label{D:r}
r=\inf\{ s \geq 0 : \mu\lt([s, s+\ep)\rt) > 0 \text{ for all } \ep > 0\}
\end{IEEEeqnarray}
and 
\begin{IEEEeqnarray}{rCl}
\label{D:l}
l=\sup\{ s<0 : \mu \lt( (s-\ep, s] \rt)>0 \text{ for all } \ep >0 \}.
\end{IEEEeqnarray}
We write $m = (l+r)/2$ for the midpoint between $l$ and $r$.

The following proposition will serve as a useful device for understanding long-term volatility.
\begin{proposition} \label{T:levelcurves}\item
	\begin{enumerate} [1.]
		\item If $\alpha > m$, then $\pi(t,\alpha t)\to 1$ as $t\to\infty$.
		\item If $\alpha < m$, then $\pi(t,\alpha t)\to 0$ as $t \tend \infty$.
	\end{enumerate}
\end{proposition}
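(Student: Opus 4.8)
The plan is to exploit the representation $\pi(t,x)=1/(1+A(t,x))$ from the proof of Proposition~\ref{f-1-1}, so that the first claim ($\pi(t,\alpha t)\to 1$) is equivalent to $A(t,\alpha t)\to 0$ and the second to $A(t,\alpha t)\to\infty$. Writing $\phi(b):=\alpha b-b^2/2=-\tfrac12(b-\alpha)^2+\tfrac{\alpha^2}{2}$, evaluation of \eqref{E:A} at $x=\alpha t$ gives
\[
A(t,\alpha t)=\frac{N(t)}{D(t)},\qquad N(t):=\int_{(-\infty,0)}e^{t\phi(b)}\mu(\vd b),\quad D(t):=\int_{[0,\infty)}e^{t\phi(b)}\mu(\vd b).
\]
First I would record that $l$ and $r$ are the essential endpoints of the two parts of the support, i.e. $l=\sup(\supp\mu\cap(-\infty,0))$ and $r=\inf(\supp\mu\cap[0,\infty))$; this is a routine check from \eqref{D:r}--\eqref{D:l}, and in particular $\mu((l,0))=0$ while $\mu([r,r+\eps))>0$ for every $\eps>0$. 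Since $\phi$ is a downward parabola peaked at $b=\alpha$ and $\alpha>m>l$ whenever $r>l$, the maximum of $\phi$ over the negative support is $\phi(l)$, and over the non-negative support it is at least $\phi(r)$. The decisive algebraic fact is
\[
\phi(r)-\phi(l)=\alpha(r-l)-\tfrac12(r^2-l^2)=(r-l)\Big(\alpha-\tfrac{l+r}{2}\Big)=(r-l)(\alpha-m),
\]
which is strictly positive exactly when $r>l$ and $\alpha>m$.

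With this in hand the heart of the first claim is a Laplace-type comparison. For the numerator, every $b$ in the negative support satisfies $b\le l<\alpha$, so $\phi(b)\le\phi(l)$ and hence $N(t)\le\mu((-\infty,0))\,e^{t\phi(l)}$. For the denominator I would use the mass near $r$: for $b\in[r,r+\eps)$ continuity of $\phi$ gives $\phi(b)\ge\phi(r)-\eta(\eps)$ with $\eta(\eps)\to 0$, so $D(t)\ge\mu([r,r+\eps))\,e^{t(\phi(r)-\eta(\eps))}$. Combining,
\[
A(t,\alpha t)\le\frac{1}{\mu([r,r+\eps))}\,e^{t\left(\phi(l)-\phi(r)+\eta(\eps)\right)}.
\]
Since $\phi(l)-\phi(r)=-(r-l)(\alpha-m)<0$ when $r>l$, choosing $\eps$ small enough that $\eta(\eps)<\tfrac12(r-l)(\alpha-m)$ makes the exponent negative, whence $A(t,\alpha t)\to 0$ and $\pi(t,\alpha t)\to 1$.

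The main obstacle is the degenerate case $r=l$, which by $l\le 0\le r$ forces $l=r=0$, $m=0$, and therefore $\alpha>0$; here the two exponential rates coincide ($\phi(l)=\phi(r)=\phi(0)=0$) and the crude comparison is inconclusive. In this case I would argue directly: for $\alpha>0$ one has $\phi(b)=\alpha b-b^2/2<0$ for every $b<0$, so $e^{t\phi(b)}\downarrow 0$ pointwise on $(-\infty,0)$ and, being dominated by $1$, dominated convergence yields $N(t)\to 0$; meanwhile $\phi\ge\phi(0)=0$ on $[0,\eps)$ for $\eps<\alpha$, so $D(t)\ge\mu([0,\eps))>0$ stays bounded away from $0$, and again $A(t,\alpha t)\to 0$. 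This settles the first claim in all cases. The second claim is entirely analogous with the roles of $N$ and $D$ interchanged: the identity gives $\phi(l)>\phi(r)$ when $\alpha<m$, so $D(t)/N(t)\to 0$ by the same estimates when $l<r$; in the degenerate case $l=r=0$ (now $\alpha<0$) the slight asymmetry of the point $\{0\}$ is handled by the mirror argument, with $N(t)\to\infty$ by Fatou (as $e^{t\phi(b)}\to\infty$ on a positive-$\mu$-measure subset of $(2\alpha,0)$ where $\phi>0$) while $D(t)$ stays bounded, giving $A(t,\alpha t)\to\infty$ and $\pi(t,\alpha t)\to 0$.
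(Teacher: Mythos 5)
Your proposal is correct and follows essentially the same route as the paper's proof: both analyze the same ratio of exponential integrals (your $A(t,\alpha t)=N(t)/D(t)$ is exactly the paper's $h(t)$, since the factor $e^{t\alpha^2/2}$ cancels), split into the cases $l<r$ and $l=r=0$, and compare Laplace-type exponential rates concentrated near $l$ and near $r$. The differences are only in execution: your identity $\phi(r)-\phi(l)=(r-l)(\alpha-m)$ together with the continuity bound $\eta(\eps)$ near $r$ lets you treat all $\alpha>m$ at once, whereas the paper first reduces to $\alpha\in(m,r)$ via monotonicity of $\pi(t,\cdot)$ (Proposition~\ref{f-1-1}) and works with an auxiliary point $\gamma>r$; likewise your dominated-convergence/Fatou arguments in the degenerate case replace the paper's explicit $\eps$-splitting estimates.
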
 

\begin{proof}
Given $\alpha \in \R$, define
\[ h(t) := \frac{\int_{(-\infty,0)} \exp \lt(-(b -\alpha)^2 \frac{t}{2}\rt) \mu(\vd b)} 
					{\int_{[0,\infty)} \exp \lt( -(b -\alpha)^2 \frac{t}{2} \rt) \mu(\vd b)},\]
so that $\pi(t,\alpha t)=1/(1+h(t))$. 
We will prove the claims in two different cases separately.
\begin{enumerate}[(i)]
	\item \underline{First case: $l < r$.} 
	\begin{enumerate} [1.]
		\item First note that, in view of Proposition~\ref{f-1-1}, it suffices to treat the case $\alpha\in(m,r)$.
		For such $\alpha$, fix $\gm>r$ such that $\gamma-\alpha<\alpha-l$. 
		Then 
		\begin{IEEEeqnarray*}{rCl}
		h(t) &\leq& \frac{\exp \lt( -(\alpha-l)^2 \frac{t}{2} \rt) \int_{(-\infty,0)} \mu(\vd b)} 
{\exp \lt( -(\gamma-\alpha)^2 \frac{t}{2} \rt) \int_{[0, \gm]} \mu(\vd b)} 
			\to 0 
		\end{IEEEeqnarray*}
as $t\to\infty$.
Hence $\pi(t,\alpha t)\to 1$ as $t\to\infty$.
	\item For the second result, suppose that $\alpha < m$, and note that it suffices to treat the case $\alpha \in(l,m)$.
Let $\gamma<l$ be such that $\alpha-\gamma<r-\alpha$.
Then
	\begin{IEEEeqnarray*}{rCl}
		h(t) 	&\geq& \frac{\exp \lt( -(\alpha-\gamma)^2 \frac{t}{2} \rt) \int_{(\gamma,0)} \mu(\vd b)} 
{\exp \lt( -(r-\alpha)^2 \frac{t}{2} \rt) \int_{[0,\infty)} \mu(\vd b)} \to\infty 
		\end{IEEEeqnarray*}
as $t\to\infty$.
Hence $\pi(t,\alpha t)\to 0$ as $t\to\infty$.
	\end{enumerate}
	\item \underline{Second case: $l = r=0$.}
\begin{enumerate}[1.]
	\item Assume that $\alpha>0$, and let $\ep>0$. Then
\begin{IEEEeqnarray*}{rCl}
		h(t) &\leq& \frac{\int_{(-\infty, -\ep)} \exp \lt(-(\alpha+\ep)^2 \frac{t}{2}\rt) \mu(\vd b)+\int_{[-\ep,0)} \exp \lt(-\alpha^2 \frac{t}{2}\rt) \mu(\vd b)}{\int_{[0,\alpha]} \exp \lt(-\alpha^2 \frac{t}{2}\rt) \mu(\vd b)} \\
			&\tend& \frac{\mu([-\ep,0))}{\mu([0,\alpha])}
\end{IEEEeqnarray*}
as $t\to\infty$.
Thus, since $\ep>0$ is arbitrary and $\mu([-\ep, 0)) \tend 0$ as $\ep \tend 0$, we conclude that $h(t) \tend 0$ as $t \tend \infty$.
Consequently, $\pi(t, \alpha t)\to 1$.

	\item Next, assume that $\alpha<0$. Choosing $\gm \in (\alpha, 0)$ with $\mu \lt( (\alpha,\gamma)\rt) >0$, we find that
\begin{IEEEeqnarray*}{rCl}
		h(t) 	&\geq& \frac{\exp \lt( -(\alpha-\gamma)^2 \frac{t}{2} \rt) \int_{(\alpha,\gamma)} \mu(\vd b)} 
{\exp \lt( -\alpha^2 \frac{t}{2} \rt) \int_{[0,\infty)} \mu(\vd b)} \to\infty 
		\end{IEEEeqnarray*}
as $t\to\infty$. Consequently, $\pi(t,\alpha t)\to 0$ as $t\to\infty$, which finishes the proof.
\end{enumerate}
\end{enumerate}
\end{proof}

\begin{remark} 
Notice that Proposition~\ref{T:levelcurves} implies that for any fixed value $\pi$, the corresponding level curve $x(\cdot, \pi)$ satisfies $\lim_{t \to\infty} (ct-x(t, \pi))=\infty$ if $\alpha>m$, and $\lim_{t \to\infty} (\alpha t-x(t, \pi))=-\infty$ if $\alpha<m$.
\end{remark}

\subsection{Long-term behaviour of the volatility}

Now, we are in a position to determine the limit $\sigma(\infty,\pi):=\lim_{t\to\infty}\sigma(t,\pi)$ of the volatility as time increases.

\begin{proposition} \label{T:ltvol}
The long-term limit of volatility satisfies $\sigma(\infty,\pi)=(r-l)\pi(1-\pi)$.
\end{proposition}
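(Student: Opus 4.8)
The plan is to read the limit directly off the representation \eqref{sigma},
\[
\sigma(t,\pi)=(1-\pi)\,\E_{t,x(t,\pi)}[\Dr\Ind_{\{\Dr\geq0\}}]-\pi\,\E_{t,x(t,\pi)}[\Dr\Ind_{\{\Dr<0\}}],
\]
so that it suffices to show $\E_{t,x(t,\pi)}[\Dr\Ind_{\{\Dr\geq0\}}]\to\pi r$ and $\E_{t,x(t,\pi)}[\Dr\Ind_{\{\Dr<0\}}]\to(1-\pi)l$ as $t\to\infty$; existence of the limit $\sigma(\infty,\pi)$ is already guaranteed by the monotonicity in Corollary~\ref{T:SD}. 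Writing $c(t):=x(t,\pi)/t$ and completing the square, the conditional law along the level curve satisfies $\mu_{t,x(t,\pi)}(\vd b)\propto\exp\lt(-\tfrac{t}{2}(b-c(t))^2\rt)\mu(\vd b)$, and, crucially, $\mu_{t,x(t,\pi)}([0,\infty))=\pi$ for every $t$ by the very definition of the level curve. Hence $\E_{t,x(t,\pi)}[\Dr\Ind_{\{\Dr\geq0\}}]=\pi\,\E_{t,x(t,\pi)}[\Dr\mid\Dr\geq0]$ and $\E_{t,x(t,\pi)}[\Dr\Ind_{\{\Dr<0\}}]=(1-\pi)\,\E_{t,x(t,\pi)}[\Dr\mid\Dr<0]$, and it remains to identify the two conditional means.

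First I would pin down the asymptotic slope of the level curve: $c(t)\to m$ as $t\to\infty$. This follows from Proposition~\ref{T:levelcurves} together with the strict monotonicity of $\pi(t,\cdot)$ from Proposition~\ref{f-1-1}. Indeed, if $x(t_n,\pi)>\alpha t_n$ along some sequence $t_n\to\infty$ for a fixed $\alpha>m$, then $\pi=\pi(t_n,x(t_n,\pi))\geq\pi(t_n,\alpha t_n)\to1$, contradicting $\pi\in(0,1)$; the reverse bound with $\alpha<m$ is symmetric, whence $\limsup_t c(t)\le m\le\liminf_t c(t)$.

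The heart of the argument is then a Laplace-type concentration. Recalling that $r=\inf(\supp\mu\cap[0,\infty))$, so that $\mu([0,r))=0$ by a covering argument from the definition \eqref{D:r} while $\mu([r,r+\delta))>0$ for every $\delta>0$, and that $c(t)\to m\le r$, the weight $\exp(-\tfrac t2(b-c(t))^2)$ restricted to $[0,\infty)$ is eventually decreasing in $b$ beyond $r$, so the normalized conditional law on $[0,\infty)$ concentrates at $r$; consequently $\E_{t,x(t,\pi)}[\Dr\mid\Dr\geq0]\to r$. A symmetric statement on $(-\infty,0)$, using $l=\sup(\supp\mu\cap(-\infty,0))$ and $c(t)\to m\ge l$, yields $\E_{t,x(t,\pi)}[\Dr\mid\Dr<0]\to l$. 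Plugging these into the displayed formula gives
\[
\sigma(\infty,\pi)=(1-\pi)\,\pi r-\pi\,(1-\pi)\,l=(r-l)\,\pi(1-\pi),
\]
as claimed.

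The step requiring genuine care, which I expect to be the main obstacle, is upgrading this weak concentration to convergence of the \emph{first} moments, since $\Dr$ is unbounded. Fix $\delta>0$ with $l<r$ (the main case) and split $[0,\infty)$ into $[r,r+\delta)$ and $[r+\delta,\infty)$. For the normalizing constant one has the lower bound $Z_t:=\int_{[0,\infty)}e^{-\frac t2(b-c(t))^2}\mu(\vd b)\ge\mu([r,r+\tfrac\delta2))\,e^{-\frac t2(r+\frac\delta2-c(t))^2}$. For the tail first moment one factors, via the standing assumption \eqref{integrability}, $e^{-\frac t2(b-c(t))^2}=e^{-\ep b^2}\,e^{\ep b^2-\frac t2(b-c(t))^2}$; for $t$ large the exponent $\ep b^2-\frac t2(b-c(t))^2$ is decreasing on $[r+\delta,\infty)$ (its unconstrained maximum sits near $m<r$), so it is bounded by its value at $r+\delta$, while $b\,e^{-\ep b^2}$ is bounded. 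Comparing exponents, the ratio $Z_t^{-1}\int_{[r+\delta,\infty)}b\,e^{-\frac t2(b-c(t))^2}\mu(\vd b)\lesssim C(\delta)\,e^{-\kappa(\delta)\,t}\to0$ for a constant $\kappa(\delta)>0$, because $r>m$; the remaining mass on $[r,r+\delta)$ carries a mean within $\delta$ of $r$, and letting $\delta\downarrow0$ gives $\E_{t,x(t,\pi)}[\Dr\mid\Dr\geq0]\to r$. The degenerate case $l=r=0$, for which the asserted limit is $0$, is handled by the identical estimates centred at the single accumulation point $0$.
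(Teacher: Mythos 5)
Your proof is correct, and it shares the paper's overall skeleton: both reduce via \eqref{sigma} to the two limits $\E_{t,x(t,\pi)}[\Dr\Ind_{\{\Dr\geq0\}}]\to\pi r$ and $\E_{t,x(t,\pi)}[\Dr\Ind_{\{\Dr<0\}}]\to(1-\pi)l$, and both rest on $\mu([0,r))=0$, $\mu([r,r+\delta))>0$, and an exponential-domination estimate to kill the tail beyond $r+\delta$. The execution differs in one genuine way. The paper never locates the level curve: it picks $a>r$ and $\gamma\in(r,a)$ with $\gamma-r<a-\gamma$, uses Proposition~\ref{T:levelcurves} only to conclude $x(t,\pi)\leq\gamma t$ eventually, and then invokes the stochastic monotonicity of Corollary~\ref{T:mon} (via Proposition~\ref{incr}, applied to the non-decreasing function $b\mapsto b\Ind_{(a,\infty)}(b)$) to bound $\E_{t,x(t,\pi)}[\Dr\Ind_{(a,\infty)}(\Dr)]$ by the same expectation along the ray $\gamma t$, where the Gaussian-weight comparison is a one-line computation with normalizer bounded below by $e^{-(\gamma-r)^2t/2}\mu([r,\gamma))$. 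You instead first prove $x(t,\pi)/t\to m$ (essentially the content of the Remark following Proposition~\ref{T:levelcurves}, and your derivation of it from Propositions~\ref{T:levelcurves} and~\ref{f-1-1} is sound) and then run the Laplace estimate directly along the level curve, which forces you to lower-bound the normalizer $Z_t$ on the curve and to tame the unbounded integrand via the factorization $b\,e^{-\frac t2(b-c(t))^2}=\bigl(b\,e^{-\ep b^2}\bigr)e^{\ep b^2-\frac t2(b-c(t))^2}$ using \eqref{integrability}; the paper's tail needs only $\int_{(a,\infty)}b\,\mu(\vd b)<\infty$. The trade-off: the paper's monotonicity transfer makes the estimate shorter and avoids all bookkeeping about $c(t)$, while your route is more self-contained and quantitative (it yields the slope of the level curve and an explicit exponential rate $e^{-\kappa(\delta)t}$), at the cost of the extra care you correctly identified as the main obstacle. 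One small imprecision: your exponent comparison does not really need $r>m$ — the gap $(r+\delta-c(t))^2-(r+\delta/2-c(t))^2\to\frac\delta2\bigl(2(r-m)+\frac{3\delta}2\bigr)$ stays positive even when $r=m$, which is in fact what makes your degenerate case $l=r=0$ go through with the same estimates.
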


\begin{remark}
Note that if $l=r=0$, then the volatility converges to zero as time tends to infinity. Also, note that if $l<r$, then the volatility tends to the volatility from the case of a two-point prior distribution.
\end{remark}

\begin{proof}[Proof of Proposition \ref{T:ltvol}]
We first claim that
\begin{equation*}
\E_{t, x(t,\pi)} \left[ B \Ind_{[0,\infty)}(B)\right]\to  \pi r
\end{equation*}
as $t\tend \infty$. To see this, suppose that $a > r$ and take $\gm\in(r,a)$ such that $\gamma-r <a-\gamma$. 
By Corollary~\ref{T:mon} and Proposition~\ref{T:levelcurves}, for all large enough $t$, 
\begin{IEEEeqnarray*}{rCl}
\E_{t,x(t,\pi)}\left[B\Ind_{(a,\infty)}(\Dr)\right] &\leq& 
\E_{t,\gamma t} \lt[B \Ind_{(a,\infty)}(\Dr) \rt]\\
	&=& \frac{\int_{(a, \infty)} be^{-(b-\gm)^2 \frac{t}{2}} \mu(\vd b)}
	{ \int_{\R} e^{-(b-\gm)^2 \frac{t}{2}} \mu(\vd b)} \\
	&\leq& \frac{\exp \lt(- ( a - \gm)^2 \frac{t}{2} \rt)  \int_{(a, \infty)} b \mu(\vd b)  }
	{\exp \lt( -(\gamma-r)^2 \frac{t}{2}\rt) \mu ([r, \gm))  } ,
\end{IEEEeqnarray*}
which tends to 0 as $t\to\infty$.
Now, the fact that $\P_{t,x(t,\pi)}(\Dr \in [0, r))=0$ for all $t\geq0$ finishes the claim.

Next, straightforward modifications of the arguments above show that 
\begin{equation*}
\E_{t, x(t,\pi)} \left[ B \Ind_{(-\infty,0)}(B)\right]\to (1-\pi) l
\end{equation*}
as $t\to\infty$. Since 
\begin{IEEEeqnarray*}{rCl}
\sigma(t,\pi) &=& (1-\pi)\E_{t,x(t,\pi)}\left[B \Ind_{[0,\infty)}(B)\right]-\pi \E_{t,x(t,\pi)}\left[B \Ind_{(-\infty,0)}(B)\right],
\end{IEEEeqnarray*}
this finishes the proof.
\end{proof}

\begin{remark}
Similar arguments as in the proof above show that $\mu_{t, x(t,\pi)} \Rightarrow (1-\pi)\dir{l} + \pi \dir{r}$ as $t\to\infty$.
Thus, along a level curve $x(\cdot,\pi)$ the conditional distribution of $B$ converges weakly to the two-point distribution
with mass $\pi$ at $r$ and mass $1-\pi$ at $l$.
\end{remark}
\subsection{Long-term behaviour of the boundaries}
\begin{theorem}
\item
\begin{itemize}
\item
If $l=r=0$, then $b_1(\infty)=b_2(\infty)=1/2$.
\item
If $l<r$, then $b_1(\infty)=b_1^{r-l}$ and $b_2(\infty)=b_2^{r-l}$, where $b_1^{r-l}<1/2<b_2^{r-l}$ are the optimal boundaries 
for a two-point prior distribution with mass at points separated by 0 and at a distance $r-l$ from each other.
\end{itemize}
\end{theorem}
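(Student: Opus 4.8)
The plan is to pass to the time-homogeneous limit problem obtained from the long-term volatility, to prove that the value functions and hence the continuation regions converge to those of this limit problem, and to close the remaining gap with a quantitative concavity estimate at the boundary. Concretely, set $\sigma_\infty(\pi):=(r-l)\pi(1-\pi)$, which by Proposition~\ref{T:ltvol} equals $\lim_{t\to\infty}\sigma(t,\pi)$, and let $v_\infty$ denote the value of the time-homogeneous optimal stopping problem for the diffusion $\ud\Xi_s=\sigma_\infty(\Xi_s)\,\ud\hat W_s$ with payoff $g$. When $l<r$ this is exactly the two-point problem with gap $r-l$ solved in \cite{S}, and by the analogue of Lemma~\ref{v-at-1/2} its continuation region is a nonempty open interval $(b_1^{r-l},b_2^{r-l})$ with $b_1^{r-l}<1/2<b_2^{r-l}$; when $l=r=0$ we have $\sigma_\infty\equiv0$ and $v_\infty=g$. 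Since $v(t,\pi)$ is non-decreasing in $t$ (Proposition~\ref{v-is-nondecr}) and bounded by $g$, the limit $v(\infty,\pi):=\lim_{t\to\infty}v(t,\pi)$ exists, and similarly $b_1(t)\nearrow b_1(\infty)$ and $b_2(t)\searrow b_2(\infty)$.

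First I would establish the upper bound $v(t,\pi)\le v_\infty(\pi)$ for all $(t,\pi)$. Because $\sigma(\cdot,\pi)$ is non-increasing in time (Corollary~\ref{T:SD}) and tends to $\sigma_\infty(\pi)$, we have $\sigma(t+s,\cdot)\ge\sigma_\infty$ for all $s\ge0$; as $g$ is concave and the expected value of a concave function of a martingale diffusion is non-increasing in volatility, the Bermudan-approximation argument of Proposition~\ref{v-is-nondecr} (via \cite{JT}) gives $v(t,\pi)\le v_\infty(\pi)$. In the case $l<r$, for $\pi\in(b_1^{r-l},1/2)$ this yields $v(t,\pi)\le v_\infty(\pi)<g(\pi)$ for every $t$, so $(t,\pi)\in\mathcal{C}$ for all $t$ and hence $\pi\ge b_1(\infty)$; letting $\pi\downarrow b_1^{r-l}$ gives $b_1(\infty)\le b_1^{r-l}$, and symmetrically $b_2(\infty)\ge b_2^{r-l}$.

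For the reverse inequalities I would use a quantitative concavity bound. On the continuation region, \eqref{fbp} together with $\partial_1 v\ge0$ gives $\partial_2^2 v(t,\pi)\le-2c/\sigma(t,\pi)^2$; integrating twice from $b_1(t)$ and using the smooth-fit identities $v(t,b_1(t))=b_1(t)$ and $\partial_2 v(t,b_1(t))=1$ (Proposition~\ref{T:SmoothFit}) yields, for $b_1(t)<\pi<1/2$,
\[
g(\pi)-v(t,\pi)\ge\frac{c\,(\pi-b_1(t))^2}{\bar\sigma(t,\pi)^2},\qquad \bar\sigma(t,\pi):=\max_{w\in[b_1(t),\pi]}\sigma(t,w).
\]
In the case $l=r=0$, Dini's theorem (monotone convergence of the continuous functions $\sigma(t,\cdot)$ to the continuous limit $0$ on the compact set $[b_1(0),1/2]$) gives $\bar\sigma(t,\pi)\to0$; if $b_1(\infty)<1/2$, then fixing $\pi\in(b_1(\infty),1/2)$ would make the right-hand side tend to $+\infty$ while the left-hand side stays $\le1/2$, a contradiction. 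Hence $b_1(\infty)=1/2$, and likewise $b_2(\infty)=1/2$. In the case $l<r$ the same estimate with $\bar\sigma(t,\pi)\to\max_{[b_1(\infty),\pi]}\sigma_\infty>0$ shows that $v(\infty,\pi)<g(\pi)$ strictly for every $\pi>b_1(\infty)$.

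It then remains to identify $v(\infty,\cdot)$ with $v_\infty$ in the case $l<r$, for once this holds the strict inequality $v(\infty,\pi)<g(\pi)$ forces $\pi\in(b_1^{r-l},b_2^{r-l})$, hence $\pi>b_1^{r-l}$, and letting $\pi\downarrow b_1(\infty)$ gives $b_1(\infty)\ge b_1^{r-l}$, completing the argument. The hard part will be the matching lower bound $v(\infty,\cdot)\ge v_\infty$: larger volatility lowers the value, so $v$ and $v_\infty$ cannot simply be compared monotonically, and one must exploit that $\sigma(t,\cdot)$ is \emph{close} to $\sigma_\infty$, not merely above it. I would apply It\^o's formula to $v_\infty(\Pi^{t,\pi}_{t+s})$ — legitimate since $v_\infty$ is $C^1$ with bounded, piecewise-continuous second derivative by smooth fit (Proposition~\ref{T:SmoothFit} applied to the homogeneous problem) — to obtain, for any stopping time $\tau$,
\[
\E\big[g(\Pi^{t,\pi}_{t+\tau})+c\tau\big]\ge v_\infty(\pi)+\E\Big[\int_0^\tau\Big(\tfrac{\sigma(t+s,\Pi)^2}{2}\,v_\infty''(\Pi)+c\Big)\ud s\Big].
\]
Since $\tfrac{\sigma_\infty^2}{2}v_\infty''+c=0$ on $(b_1^{r-l},b_2^{r-l})$ and $v_\infty''=0$ outside it, the integrand is bounded below by $-\tfrac12\delta(t)\sup|v_\infty''|$, where $\delta(t):=\max_{[b_1^{r-l},b_2^{r-l}]}(\sigma(t,\cdot)^2-\sigma_\infty^2)\to0$ by monotonicity and Dini. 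Restricting to near-optimal $\tau$, the a priori bound $c\,\E[\tau]\le\E[g+c\tau]\le v_\infty(\pi)+1$ keeps $\E[\tau]$ uniformly bounded, so the defect is $O(\delta(t))$; taking the infimum over $\tau$ and then $t\to\infty$ gives $v(\infty,\cdot)\ge v_\infty$. Combined with the upper bound this yields $v(\infty,\cdot)=v_\infty$ and finishes the proof.
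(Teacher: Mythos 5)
Your proposal is correct, and its first half coincides with the paper's: both use monotonicity of the value in the volatility (via \cite{JT}) together with $\sigma(t,\cdot)\geq\sigma(\infty,\cdot)$ to get $v\leq v^{r-l}$, hence $b_1(\infty)\leq b_1^{r-l}$ and $b_2(\infty)\geq b_2^{r-l}$. For the reverse inequalities, however, you take a genuinely different route. The paper truncates the volatility outside $[b_1(0),b_2(0)]$ (where Dini's theorem gives uniform convergence), squeezes $v$ between the two time-homogeneous value functions $v^{\ep+r-l}$ and $v^{r-l}$ for $t\geq t_0(\ep)$, and then lets $\ep\to 0$, invoking the \emph{explicit} two-point formulas of \cite[Theorem~21.1]{PS06} to see that the gaps $b_1^{r-l}-b_1^{\ep+r-l}$ and $b_2^{\ep+r-l}-b_2^{r-l}$ vanish; this handles both bullets at once, since those explicit boundaries collapse to $1/2$ as the gap parameter tends to $0$. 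You avoid explicit formulas entirely: the quantitative concavity bound $g(\pi)-v(t,\pi)\geq c\,(\pi-b_1(t))^2/\bar\sigma(t,\pi)^2$ (from the free-boundary equation, $\partial_1 v\geq 0$ and smooth fit) settles the case $l=r=0$ by a blow-up contradiction and yields the strict inequality $v(\infty,\pi)<g(\pi)$ inside $(b_1(\infty),b_2(\infty))$ when $l<r$; the perturbative It\^o argument, with $\delta(t)\to0$ by Dini and the uniform bound $c\,\E[\tau]\leq 1/2+\eta$ for near-optimal $\tau$, then identifies $v(\infty,\cdot)=v_\infty$, which forces $b_1(\infty)\geq b_1^{r-l}$ and $b_2(\infty)\leq b_2^{r-l}$. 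What each approach buys: the paper's proof is shorter because the limiting two-point problem has a closed-form solution, whereas yours needs only qualitative facts about the limit problem ($0<b_1^{r-l}<1/2<b_2^{r-l}<1$, so $\sup|v_\infty''|<\infty$, plus smooth fit), and would therefore transfer to situations where the limiting problem is not explicitly solvable. Two technical points in your argument should be made explicit rather than asserted: the use of the generalized It\^o (It\^o--Tanaka) formula for the concave $C^1$ function $v_\infty$ whose derivative is absolutely continuous with bounded density, and the vanishing of the expectation of the stochastic integral at the possibly unbounded (but a.s.\ finite) stopping time $\tau$ --- the latter is justified since $|v_\infty'|\leq 1$ and $\E[\langle\Pi\rangle_\infty]<\infty$ make the stopped stochastic integral a uniformly integrable martingale; this is precisely the optional-sampling subtlety the paper highlights in the remark following Theorem~\ref{inteqn-perpetual}.
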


\begin{proof}
Since the volatility $\sigma(\cdot, \cdot)$ is non-increasing in time, Proposition~\ref{T:ltvol} and Dini's theorem yield 
that $\sigma(t, \cdot)$ converges to 
$\sigma(\infty,\pi)=(r-l)\pi(1-\pi)$ uniformly on the compact interval $[b_1(0),b_2(0)]$ as $t \tend \infty$.
Therefore, given $\ep>0$ we can find $t_0$ large enough so that $\sigma(t_0,\pi)\leq (\ep+r-l)\pi(1-\pi)$ for $\pi\in[b_1(0),b_2(0)]$.
Define 
\[\hat\sigma(t,\pi):=\sigma(t,\pi) \Ind_{[b_1(0),b_2(0)]}(\pi),\]
and denote by $\hat v$ the corresponding value function. 
Since the optimal stopping problem \eqref{v} is monotone in the volatility (compare e.g. \cite[Lemma 10]{JT}), 
we have that $\hat v\geq v$. On the other hand, since $\hat\sigma=\sigma$ on the continuation region 
$\{(t,\pi):b_1(t)<\pi<b_2(t)\}$, we also have $\hat v\leq v$, so $\hat v=v$.
Moreover, by monotonicity in the volatility, 
\[v^{\ep+r-l}\leq \hat v=v\leq v^{r-l}\]
on $[t_0,\infty)\times (0,1)$,
where $v^a$ denotes the value function corresponding to a volatility function $a\pi(1-\pi)$.
Since the value function $v$ is squeezed in between the value functions $v^{\ep+r-l}$ and $v^{r-l}$
from time $t_0$, the optimal stopping boundaries $b_1$ and $b_2$ 
are squeezed in between the corresponding optimal stopping boundaries for $v^{\ep+r-l}$
and $v^{r-l}$. By inspection of the explicit formulas in the two-point distribution case, see \cite[Theorem~21.1]{PS06},
the gaps $b_1^{r-l}-b_1^{\ep+r-l}$ and $b_2^{\ep+r-l}-b_2^{r-l}$ between the boundaries vanish as $\ep\to 0$,
which finishes the proof.

\end{proof}

\begin{remark}
It is also of interest to determine $b_i(0)$ for $i=1,2$ in order to find the best bounds for the continuation region.
It seems difficult to determine these quantities in general, but an upper bound for the continuation region initially
(and thus at all times) can be established by solving the free-boundary problem for the time-homogeneous 
volatility $\sigma(0,\pi)$. However, we expect these bounds to be rather crude, and therefore 
do not provide any details.
\end{remark}

\section{The normal prior distribution}
\label{examples}

In this final section, we study the case of a normal prior distribution in further detail. In particular, we show that the
kernel in the integral equations determined in Section~\ref{S:inteqn} can be calculated explicitly
for normal priors. For similar results in the case of the two-point distribution, see \cite{GP}.

First, recall from Section~\ref{3} that a normal prior distribution with mean $m$ and variance $\gamma^2$ 
leads to a volatility surface $\sigma(\cdot, \cdot)$ 
that is symmetric around the line $\pi=1/2$. As a result, the stopping boundaries $b_1$ and $b_2$ are also 
symmetric around $\pi=1/2$ with $b_2(t) = 1-b_1(t)$, so it suffices to solve a single integral equation to determine 
both boundaries.  Next, recall that the conditional distribution $\mu_{t,x}$ is normal with standard deviation 
$\gamma(t):=\gamma/\sqrt{1+t\gamma^2}$. 
Consequently, the $x$-value that gives $\pi(t,x)=b(t)$ is such that the conditional drift equals 
\[m(t):=\Phi^{-1}(b(t))\gamma/\sqrt{1+t\gamma^2}.\]

Now, given $s>0$, let $Y$ denote a $N(m(t)s,s+s^2\gamma^2(t))$-distributed random variable. Then using (\ref{E:pinormal}), we calculate
\begin{IEEEeqnarray*}{rCl}
 K(t,s,b(t),b(t+s)) &:=& \P\left(b(t+s)<\Pi_{t+s}^{t,b(t)}<1-b(t+s)\right)\\
&=& \P\left(b(t+s) < \Phi\left(\frac{m(t) +\gamma^2(t) Y}{\gamma(t)\sqrt{1+s\gamma^2(t)}}\right) <1-b(t+s)\right)\\
&=& \Phi(d_2)-\Phi(d_1),
\end{IEEEeqnarray*}
where
\[d_1:=\frac{\Phi^{-1}(b(t+s))\gamma(t)\sqrt{1+s\gamma^2(t)}-m(t)(1+s\gamma^2(t))}
{\gamma^2(t)\sqrt{s+s^2\gamma^2(t)}}\]
and
\[d_2:=\frac{-\Phi^{-1}(b(t+s))\gamma(t)\sqrt{1+s\gamma^2(t)}-m(t)(1+s\gamma^2(t))}
{\gamma^2(t)\sqrt{s+s^2\gamma^2(t)}}.\]
Thus the kernel $K$ appearing in the integral equation \eqref{symmetriceqn} and in the corresponding 
equation for the infinite-horizon formulation is explicit. 

\begin{figure}[htbp]
\includegraphics[width=\textwidth]{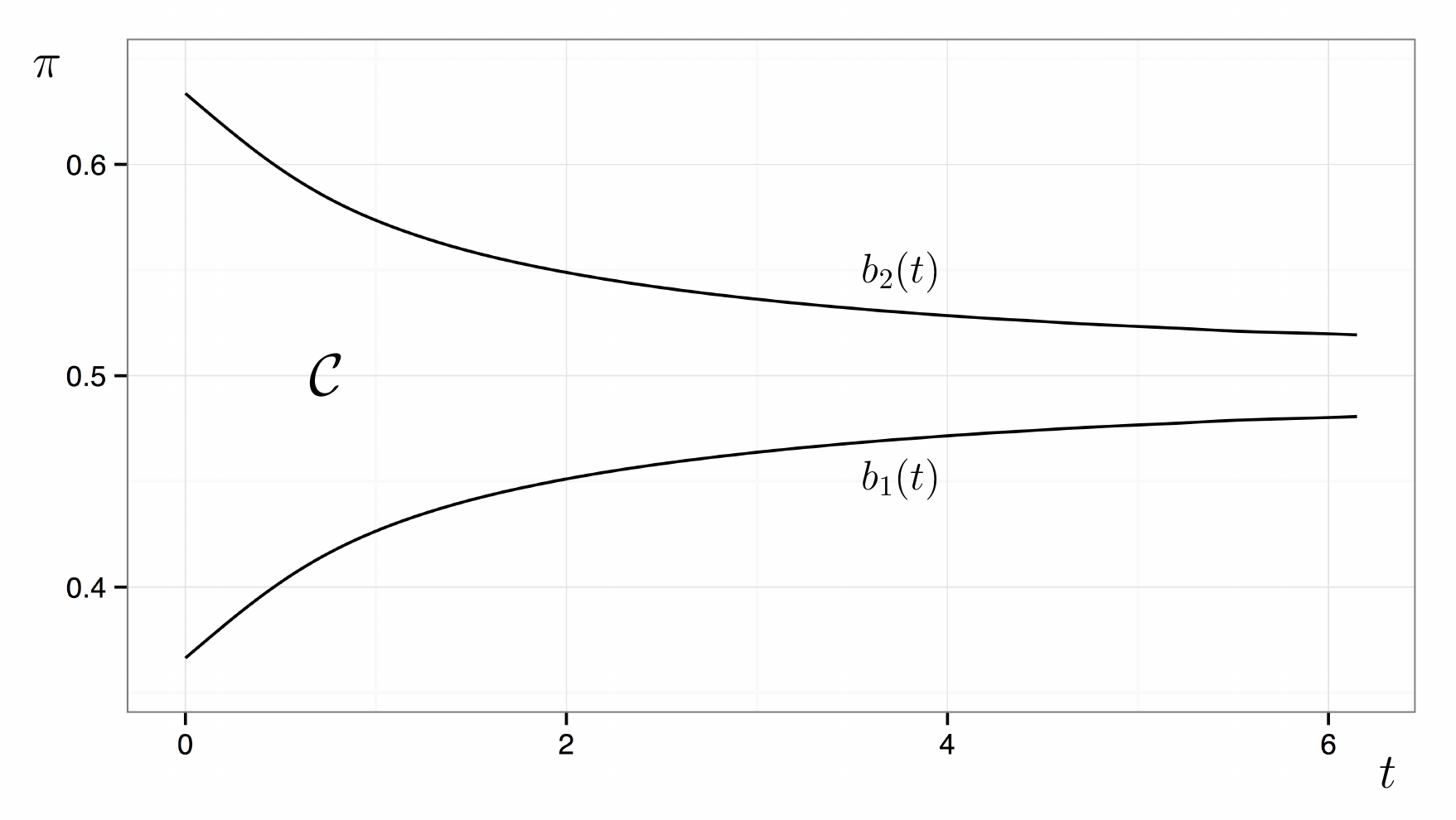}
\caption{The boundaries $b_1$ and $b_2$ calculated numerically for the case of \mbox{$N(m, 1)$-prior} (note that the boundaries do not depend on $ m \in \R$) and the cost of observation $c=0.5$ per unit time.}
\end{figure}

\end{document}